\documentclass[12pt]{article}
\usepackage{amssymb,amsmath,amsthm, amsfonts}
\usepackage{graphicx}
\usepackage{epsfig}
\usepackage{tikz}

\textwidth=16.5cm \textheight=23.5cm \headheight=0cm
\topmargin=1cm
\def\disp{\displaystyle}

\oddsidemargin 0cm \headsep=-1.0cm \raggedbottom

\def\dref#1{(\ref{#1})}

\theoremstyle{plain}
\newtheorem{theorem}{Theorem}[section]
\newtheorem{lemma}{Lemma}[section]

\theoremstyle{definition}

\newtheorem{remark}{Remark}[section]

\setcounter{equation}{0}

\numberwithin{equation}{section}

\linespread{1.6}

\begin{document}

\title{\bf A note for global existence of a two-dimensional chemotaxis-haptotaxis model with remodeling of non-diffusible attractant}

\author{
Jiashan Zheng %$^{1}$
\thanks{Corresponding author.   E-mail address:
 zhengjiashan2008@163.com (J.Zheng)}
% ,Yifu Wang$^{2}$
\\
    %$^{1}$
    School of Mathematics and Statistics Science,\\
     Ludong University, Yantai 264025,  P.R.China \\
% $^2$School of Mathematics and Statistics, Beijing
%Institute of Technology\\ Beijing 100081, P.R.China \\
 %$^3$Science College of Northeast Dianli University, Jilin City 132013, China
}
\date{}

\maketitle \vspace{0.3cm}
\noindent
\begin{abstract}
%We consider nonnegative solutions of the Neumann boundary value problem for the
%chemotaxis system
In this paper, we study the following  the coupled   chemotaxis--haptotaxis model with remodeling of non-diffusible attractant
$$
 \left\{\begin{array}{ll}
  u_t=\Delta u-\chi\nabla\cdot(u\nabla v)-
  \xi\nabla\cdot(u\nabla w)+\mu u(1-  u-w),\\
 \disp{v_t=\Delta v- v +u},\quad
\\
\disp{w_t=- vw+\eta w(1-u-w)},\quad\\
 \end{array}\right.\eqno(0.1)
$$
in a bounded smooth domain $\Omega\subseteq\mathbb{R}^2$ with zero-flux boundary conditions, where $\chi$, $\xi$  and $\eta$ are positive parameters.
Under appropriate regularity assumptions on the initial data $(u_0, v_0, w_0)$, by developing  some $L^p$-estimate techniques, we prove the global existence and uniqueness of classical solutions when $\mu>0$, where $\mu$ is the logistic growth rate of cancer cells. This result removes the additional restriction of $\mu$ {\bf is sufficiently large}
in \cite{PangPang1} (J. Diff. Eqns., 263(2)(2017),  1269--1292) for the global existence  of solutions.

\end{abstract}

\vspace{0.3cm}
\noindent {\bf\em Key words:}~%Boundedness;
Chemotaxis--haptotaxis;
Global existence;
Non-diffusible attractant
%Generalized logistic source

\noindent {\bf\em 2010 Mathematics Subject Classification}:~  92C17, 35K55,
35K59, 35K20

\newpage
\section{Introduction}
The oriented movement of biological cells or organisms in response to a chemical
gradient is called chemotaxis (see
Calvez and Carrillo \cite{Calvez455},
Fontelos et al. \cite{Fontelos444},
Hillen and Painter \cite{Hillen79},
Horstmann \cite{Horstmann4455,Horstmann2710},
J\"{a}ger and Luckhaus \cite{Jger317},
Kavallaris and P. Souplet  \cite{Kavallaris333},
Nagai \cite{Nagai318},
Perthame \cite{Perthame444},
Sherratt \cite{Sherratt455},
Winkler \cite{Winkler792}).
%\cite{Calvez455,Fontelos444,Hillen79,Horstmann4455,Horstmann2710,Jger317,Kavallaris333,Nagai318,Perthame444,Sherratt455,Winkler792}
%To the best of our knowledge,
% the classical chemotaxis model may be first proposed in 1970 by Keller and Segel (see \cite{Keller79}) and it has been greatly
%extended and studied in the last four  decades (see \cite{Filbert33,Fontelos444,Friedman444,Horstmann444,Osaki556,Perthame444,Tao333,Tello2333,Tello22} and the references cited therein).
To describe chemotaxis of cell populations,  the signal is  produced
by the cells, in 1970, Keller and Segel (see \cite{Keller79}) proposed
  an important variant of the quasilinear
chemotaxis model
%
%The interesting
%feature of Keller--Segel types of models
\begin{equation}
 \left\{\begin{array}{ll}
  u_t=\nabla\cdot(\phi(u)\nabla u)-\chi\nabla\cdot(u\nabla v),\quad
x\in \Omega, t>0,\\
 \disp{v_t=\Delta v +u- v}.\quad
x\in \Omega, t>0.\\
%\disp{w_t=- vw +\eta w(1-u-w)},\quad
%x\in \Omega, t>0,\\
% \disp{\frac{\partial u}{\partial \nu}=\frac{\partial v}{\partial \nu}=0},\quad
%x\in \partial\Omega, t>0,\\
%\disp{u(x,0)=u_0(x)},v(x,0)=v_0(x).\quad
%x\in \Omega,\\
 \end{array}\right.\label{ssdesdddrrff1.1}
\end{equation}
%where $\mu\geq0.$
The interesting
feature of quasilinear Keller--Segel types of models \dref{ssdesdddrrff1.1}
 is the possibility of blow-up of solutions in finite time, which strongly depends
on the space dimension (see e.g. Horstmann et al. \cite{Horstmann4455,Horstmann444},
 Rascle and  Ziti \cite{Rascle444}).
%\cite{Horstmann4455,Horstmann444,Rascle444}
%where $\mu\geq0.$
%
In fact,
 %In the special case $\mu = 0$,
  solutions of \dref{ssdesdddrrff1.1} may blow up in finite time when
$N\geq2$ (Herrero and  Vel\'{a}zquez \cite{Herrero710},
Osaki et al. \cite{Osakix391}, Winkler \cite{Winkler793}).
%\cite{Osakix391,Winkler793,Herrero710}
%  it was shown that in the
%two-dimensional setting this problem exhibits a mass threshold phenomenon in respect of
%blow-up in finite time (see \cite{Herrero710,Nagaixcdf791}, for instance).
In the
higher-dimensional case when $N\geq3$, small total mass of cells appears to be insufficient to
rule out blow-up in  (Winkler et al. \cite{Winkler792,Horstmann791}).
%The survey  \cite{Bellomo1216} provides a  critical analysis of the qualitative
%properties of the solutions to problems related to the application of Keller and
%Segel models to various biological contexts.
Some recent studies show that the large nonlinear diffusion function
(see Ishida  et al. \cite{Ishida},
Tao and Winkler \cite{Tao794}, Zheng
\cite{Zhengsssdddssddxxss,Zhengssddxxss}), the nonlinear chemotactic sensitivity function (see Fujie et al. \cite{Fujierrfttg}) and the (generalized)  logistic growth term (see Lankeit \cite{Lankeit3444dd79477ddffvg},
Winkler et al. \cite{Tello710,Winkler37103,Winkler79312}, Zheng \cite{Zheng,Zhengssddxxss}) may prevent the
blow-up of solutions. One important extension of the classical Keller--Segel model to a more complex cell migration
mechanism was proposed by Chaplain and Lolas (see Chaplain and Lolas \cite{Chaplain3444}) in order to describe processes of cancer
invasion. In 2006, Chaplain and Lolas (\cite{Chaplain3444}) described the process of cancer invasion on the macroscopic
scale by the chemotaxis-haptotaxis system (with remodeling of non-diffusible attractant)
 \begin{equation}
 \left\{\begin{array}{ll}
  u_t=\Delta u-\chi\nabla\cdot(u\nabla v)-\xi\nabla\cdot
  (u\nabla w)+\mu u(1-u-w),\quad
x\in \Omega, t>0,\\
 \disp{\tau v_t=\Delta v +u- v},\quad
x\in \Omega, t>0,\\
\disp{w_t=- vw +\eta w(1-u-w)},\quad
x\in \Omega, t>0,\\
 \disp{\frac{\partial u}{\partial \nu}-\chi u\frac{\partial v}{\partial \nu}-\xi\frac{\partial w}{\partial \nu}=\frac{\partial v}{\partial \nu}=\frac{\partial w}{\partial \nu}=0},\quad
x\in \partial\Omega, t>0,\\
\disp{u(x,0)=u_0(x)},\tau v(x,0)=\tau v_0(x),w(x,0)=w_0(x),\quad
x\in \Omega,\\
 \end{array}\right.\label{1.1}
\end{equation}
where $\tau>0,\eta\geq0,$
$\Omega\subseteq \mathbb{R}^N, N \geq1$ is the physical domain which we assume to be bounded with
smooth boundary, $\chi$,  $\xi$, $\mu$ and $\eta$ measure the chemotactic sensitivities and haptotactic sensitivities,
 the proliferation rate of the cells and the remodeling rate of the extracellular matrix (ECM), respectively.
%
%$\chi, \mu$, and $\xi$ are given positive parameters, and where
Here
 the
unknown quantities $u = u(x, t)$, $v = v(x, t)$ and $w = w(x, t)$ denote the density of
cancer cells, the concentration of enzyme and the density of healthy tissue, respectively.

% The model \dref{1.1}  accounts for both chemotactic migration toward a diffusible chemical, and
%haptotaxis in the direction of a static matrix.
The model \dref{1.1}  accounts for both chemotactic migration
of cancer cells towards a diffusible matrix-degrading enzyme (MDE) secreted
by themselves, and haptotactic migration towards a static tissue, also referred to as
 ECM (Chaplain et al. \cite{Chaplain3,Gerisch},
Liotta and  T. Clair \cite{Liotta22444710}).
This on the one hand opens new fields of applications
to modeling approaches in the style pursued by Keller and Segel (see \cite{Keller79}), but on the other hand it gives
rise to new mathematical challenges due to more involved couplings.

%In particular, even very
%basic questions such as those from the context of global solvability appear to be nontrivial, and
%subtle analysis seems necessary to derive satisfactory results. The cancer invasion has been biologically demonstrated that cancer invasion is associated with the degradation of the extracellular matrix (ECM) by matrix degrading enzymes (MDE) such as the urokinase-type plasminogen activator (uPA) secreted by tumor cells.

% In addition to random movement, cancer cells are supposed to bias their
%movement both towards increasing concentrations of urokinase plasminogen activator
%by chemotaxis (see [3]), and towards increasing densities of the non-diffusible
%ECM through detecting the macromolecules adhered therein by haptotaxis (see [2]).
%It is assumed that the cancer cells undergo birth and death in a logistic manner,
%competing for space with the ECM. The MDE is assumed to be produced by cancer
%cells, and to diffuse and decay, whereas the ECM is stiff in the sense that it does
%not diffuse, but it could be degraded upon contact with MDE.

If $\chi = 0$, the PDE system \dref{1.1} becomes the haptotaxis-only system (with remodeling of non-diffusible attractant)
\begin{equation}
 \left\{\begin{array}{ll}
  u_t=\Delta u-\xi\nabla\cdot
  (u\nabla w)+\mu u(1-u-w),\quad
x\in \Omega, t>0,\\
 \disp{v_t=\Delta v +u- v},\quad
x\in \Omega, t>0,\\
\disp{w_t=- vw +\eta w(1-u-w)},\quad
x\in \Omega, t>0.\\
% \disp{\frac{\partial u}{\partial \nu}=\frac{\partial v}{\partial \nu}=\frac{\partial w}{\partial \nu}=0},\quad
%x\in \partial\Omega, t>0,\\
%\disp{u(x,0)=u_0(x)},v(x,0)=v_0(x),w(x,0)=w_0(x),\quad
%x\in \Omega,\\
 \end{array}\right.\label{sswxxddff1.1}
\end{equation}
Global existence and asymptotic behavior of solutions to \dref{sswxxddff1.1}  have been investigated in
 \cite{Corriasx319,Corriasx3191,Walker,Lianu1,Tao79477ddffvg,Marciniak,Tao1}
%\cite{Corriasx319,Corriasx3191,Walker}
and \cite{Taossdd79477ddffvg} for the case $\eta=0$ and $\eta>0$, respectively.
%Global existence theories for this system were explored in \cite{Corriasx319,Corriasx3191,Walker}, whereas the boundedness and
%asymptotic behavior of solution was studied in \cite{Lianu1}. The above results rule out the possibility of
%blow-up of solutions to this haptotaxis-only system, although the solutions may exhibit some pattern
%for certain ranges of model parameters and initial data \cite{Chaplain3444}.

When $\eta = 0$, the PDE system \dref{1.1} is reduced to  the chemotaxis-haptotaxis system
%
%In order to better understand model \dref{1.1}, let us mention some previous contributions
%in this direction. In recent years, the following initial boundary value problems
%have been studied by many authors
\begin{equation}
 \left\{\begin{array}{ll}
  u_t=\Delta u-\chi\nabla\cdot(u\nabla v)-\xi\nabla\cdot
  (u\nabla w)+\mu u(1-u-w),\quad
x\in \Omega, t>0,\\
 \disp{\tau v_t=\Delta v +u- v},\quad
x\in \Omega, t>0,\\
\disp{w_t=- vw },\quad
x\in \Omega, t>0.\\
 %\disp{\frac{\partial u}{\partial \nu}=\frac{\partial v}{\partial \nu}=\frac{\partial w}{\partial \nu}=0},\quad
%x\in \partial\Omega, t>0,\\
%\disp{u(x,0)=u_0(x)},\tau v(x,0)=\tau v_0(x),w(x,0)=w_0(x),\quad
%x\in \Omega,\\
 \end{array}\right.\label{sxdcfvvghhh1.1}
\end{equation}
%where $\tau\in\{0,1\}.$
When $\tau= 0$  denotes that  the diffusion rate of the MDE is much greater than that of
cancer cells (see Chaplain and Lolas \cite{Chaplain3444},
Winkler et al. \cite{Bellomo1216,Taox26}).
In \cite{Tao2}, Tao and Wang
 proved that model \dref{sxdcfvvghhh1.1} possesses a unique global bounded classical solution for
any $\mu > 0$ in two space dimensions, and for large $\mu > 0$ in three space dimensions;
%Recently, in
Tao and Winkler (\cite{Taox26216}) studied global boundedness for model \dref{sxdcfvvghhh1.1}
with %$\phi(u) = 1$ under
the condition $\mu > \frac{(N-2)^{+}}{N}\chi$, furthermore,  they gave the exponential decay of $w$ in the large time limit for the additional explicit
smallness on $w_0$;
While, if $\tau=1$ in \dref{1.1}, Tao and Wang (\cite{Tao3}) proved
that model \dref{1.1} possesses a unique global-in-time classical solution for any $\chi > 0$
in one space dimension, and for small
$\frac{\chi}{\mu}> 0$ in two and three space dimensions;
 Tao (\cite{Tao1}) improved the result of \cite{Tao3} for any $\mu> 0$ in two space dimension;
 Additionally, recent studies have shown that
 the solution behavior can be also impacted by  the nonlinear diffusion (see
  Tao and  Winkler \cite{Tao72}, Wang et al.  \cite{Liughjj791,Wangscd331629,Zhengsssssssddxx}) and the (generalized) logistic damping (see Cao \cite{Cao},
Hillen  et al. \cite{Hillensxdc79}, Zheng et al. \cite{Zhengssddxx,Zhengaassssssssssddxx}).
 Compared with the chemotaxis-only system, chemotaxis-only system and the chemotaxis-haptotaxis system,
  the coupled chemotaxis-haptotaxis system   with remodeling of non-diffusible attractant ($\eta>0$ in  \dref{1.1}) is much less understood (Chaplain and Lolas \cite{Chaplain3444},
 Pang and  Wang \cite{PangPang1},
Tao and  Winkler \cite{Tao79477}). The main technical difficulty in their proof stems from the effects of the strong coupling in  \dref{1.1}  on the spatial regularity of $u,$ $v$ and $w$
when $\eta>0.$ When $\eta=0,$ one can %use the boundedness of $\|\nabla v(\cdot, t)\|_{L^2(\Omega)}$
%to
build
a one-sided pointwise estimate  which connects $\Delta w$  to $v$ (see Lemma 2.2 of \cite{Cao} or (3.10) of \cite{Wangscd331629}). Relying on such a
pointwise estimate, we can derive two useful energy-type inequalities that bypass $\int_{\Omega} u^{p-1}\nabla\cdot( u \nabla w)$ (see Lemma 3.2 of \cite{Zhengsssssssddxx}). Using such information along with coupled estimate techniques and the boundedness of the $\|\nabla v(\cdot, t)\|_{L^2(\Omega)}$, we establish
estimates on
$\int_{\Omega} u^p+|\nabla v|^{2q}$
 for any $p$ and $q > 1$ (see Lemmata  3.3 and 3.4 of \cite{Zhengsssssssddxx}), which results in the boundedness of $u$ in $L^\infty(\Omega)$ by using the standard regularity theory of parabolic
equation and performing the Moser iteration procedure (see Lemma  3.5 of \cite{Zhengsssssssddxx}).
%
%
%
% establish a one-sided pointwise estimate which connects $\Delta w$ to $v$,
%and thus may easily estimate the chemotaxis-related integral term $\int_{\Omega} u^{p-1}\nabla u \cdot\nabla v$ by directly multiplying the parabolic equation for cancer cell density by $u^p$ and using the elliptic equation for the MDE concentration (see \cite{Taox26,Taox26216} for in-stance).
However, for the model  \dref{1.1} with $\eta>0$, one needs to estimate the chemotaxis-related integral term
$\int_{\Omega}a^p|\nabla v|^2dx$ (see (3.28) in \cite{Tao79477}) or $\int_{\Omega}e^{-(p+1)(t-s)}a^p|\nabla v|^2dxds$
(see (3.8) of  \cite{PangPang1}) with $a:= ue^{-\xi w}$, which proves to be much more technically demanding. %the authors of \cite{Tao79477}derived an a priori estimate for
%$\|\nabla v(\cdot, t)\|_{L^2(\Omega)}$ by using an energy-like inequality and an elliptic estimate, and then estimate $\int_{\Omega} a^p|\nabla v|^2$
%by applying a variant of the Gagliardo.Nirenberg inequality involving an $L\ln L$-type norm.
%For the full chemotaxis.haptotaxis model (1.1), we take a slightly different approach. Instead of the integral $\int_{\Omega} u^p|\nabla v|^2$, we make use of $L^q$-$L^p$ estimates for the heat semigroup to estimate the chemotaxis-related term $\int_{\Omega}e^{-(p+1)(t-s)}a^p|\nabla v|^2dxds$ (see (3.8) below). This approach allows us to establish a uniform bound for
%$\|a(\cdot, t)\|_{C(\Omega)}$ with respect to time $t\in(0, \infty)$. In comparison, \cite{Tao79477} showed that
%$\|a(\cdot, t)\|_{C(\Omega)}$ was locally bounded in $(0, \infty)$ (see Lemma 3.12 of \cite{Tao79477}).
In \cite{PangPang1}, assuming that  $\mu>\xi\eta\max\{\|u_0\|_{L^\infty(\Omega)},1\}+\mu^*(\chi^2,\xi)$ (the hypothesis can not be dropped (see the proof of Lemma 3.2 of \cite{PangPang1})), Pang and Wang showed that the problem \dref{1.1}
admits a unique global solution $(u,v,w)\in (C^{2,1}
(\bar{\Omega}\times(0,\infty)))^3$. Moreover, $u$ is bounded in $\Omega\times(0,\infty)$. However, to
the best of our knowledge, it is still an open problem to determine whether or not
in the case $N=2$ some unbounded solutions may exist in \dref{1.1} with {\bf small} $\mu>0$.
%
% However, the global existence  of solutions for {\bf small} $\mu > 0$ is not
%discussed therein.
Indeed, as pointed by \cite{Bellomo1216}  (see also \cite{Taox3201}), the hypothesis on $\mu>0$ may yield the  classical global solution.
 So, it is natural to ask whether the solution is globally existence 
when $\mu> 0$. In this paper, we give a positive answer to this question. %As the main
%result, it can be formulated as follows.
%

%And, we should  mention that the methods developed in \cite{Tao79477} and \cite{PangPang1} are restricted to the {\bf two}-dimensional
%setting, and the global  existence for the corresponding {\bf three}-dimensional problem  remains
%open.

Motivated by the aforementioned papers, the purpose of this work is to establish global solvability of \dref{1.1}.
%
%
% The main result of the present paper, which establishes the global existence of smooth so-lutions to the full chemotaxis每haptotaxis model (1.1)with the remodeling of the ECM, is as follows:
%
%
% assuming   that
% on $\Omega\subseteq \mathbb{R}^2$
%
%.
Our main
result in this respect reads as follows.
\begin{theorem}\label{theorem3}
Let $\tau>0,\chi >0, \xi >0$ and $\eta > 0.$
Assume
that $\Omega\subseteq \mathbb{R}^2$ is a bounded    domain with smooth boundary and
the initial data $(u_0, v_0,w_0)$ is supposed to satisfy the following
conditions
\begin{equation}\label{x1.731426677gg}
\left\{
\begin{array}{ll}
\displaystyle{u_0\in C^{2+\vartheta}(\bar{\Omega})~~\mbox{with}~~u_0\geq0~~\mbox{in}~~\Omega~~\mbox{and}~~\frac{\partial u_0}{\partial\nu}=0~~\mbox{on}~~\partial\Omega},\\
\displaystyle{v_0\in C^{2+\vartheta}(\bar{\Omega})~~\mbox{with}~~v_0\geq0~~\mbox{in}~~\Omega~~\mbox{and}~~\frac{\partial v_0}{\partial\nu}=0~~\mbox{on}~~\partial\Omega},\\
\displaystyle{w_0\in C^{2+\vartheta}(\bar{\Omega})~~\mbox{with}~~w_0\geq0~~\mbox{in}~~\bar{\Omega}~~\mbox{and}~~\frac{\partial w_0}{\partial\nu}=0~~\mbox{on}~~\partial\Omega} ~~~~~~~~~~~~~~~~~~~~~~~~~~~~~~~~~~~~~~~~~~~~~~~~~~~~~~~~~~~~~~~~~~~\\
\end{array}
\right.
\end{equation}
with some $\vartheta\in(0,1).$
 %$\phi$   satisfy \dref{9161}--\dref{9162} and
%the initial data $(u_0,v_0,w_0)$ fulfills \dref{x1.731426677gg}.
 %\begin{equation}\label{x1.731426677gg}
%\left\{
%\begin{array}{ll}
%\displaystyle{u_0\in C(\bar{\Omega})~~\mbox{with}~~u_0\geq0~~\mbox{in}~~\Omega~~\mbox{and}~~u_0\not\equiv0},\\
%\displaystyle{v_0\in W^{1,\infty}(\Omega)~~\mbox{with}~~v_0\geq0~~\mbox{in}~~\Omega},\\
%\displaystyle{w_0\in C^{2+\vartheta}(\bar{\Omega})~~\mbox{with}~~w_0\geq0~~\mbox{in}~~\bar{\Omega}~~\mbox{and}~~\frac{\partial w_0}{\partial\nu}=0~~\mbox{on}~~\partial\Omega} ~~~~~~~~~~~~~~~~~~~~~~~~~~~~~~~~~~~~~~~~~~~~~~~~~~~~~~~~~~~~~~~~~~~\\
%\end{array}
%\right.
%\end{equation}
%with some $\vartheta\in(0,1).$
If $\mu>0$,
%
%%(ii)~$q+1=m+\displaystyle\disp\frac{2}{N}$;
%%
%(ii)~\begin{equation*}
%b>b^*:=%\left\{
%\begin{array}{ll}
%\disp\frac{N[r-m]-2}{(r-m)N+2(r-2)}\chi C_{\psi}~~\mbox{if}~~
%r=2;\\
%% 0~~\mbox{if}~~
%%q+1=m+\disp\disp\frac{2}{N};
% \end{array}
%% \right.
%\end{equation*}
%
%% $\phi$   satisfy \dref{9161}--\dref{9162}.
%  If $r>2$ or
%(ii)
%   \begin{equation}
% \mu>\mu^{*}\left\{\begin{array}{ll}
%\frac{(N-2)_{+}}{N}\chi C^{\frac{1}{\frac{N}{2}+1}}_{\frac{N}{2}+1},~~~\mbox{if}~~r=2~~\mbox{and}~~N\leq4,\\
%\disp~\mbox{is appropriately large},~~~\mbox{if}~~~~r=2~~\mbox{and}~~N>5,\\
%%v(x,0)=v_0(x).\\
% \end{array}\right.\label{gnjjmmx1.731426677gg}
%\end{equation}
 % \begin{equation}\label{gnjjmmx1.731426677gg}
%\mu>\frac{(N-2)_{+}}{N}\chi C^{\frac{1}{\frac{N}{2}+1}}_{\frac{N}{2}+1}
%\begin{array}{ll}~~\mbox{if}~~
%r=2\\
% \end{array}
%\end{equation}
% $\mu>\frac{(N-2)_{+}}{N}\chi C^{\frac{1}{\frac{N}{2}+1}}_{\frac{N}{2}+1}$,
%then the considered problem possesses a global
%classical solution which is bounded,
% \begin{equation}\label{gnjjmmx1.731426677gg}
% \mu\begin{array}{ll}
%>2-\frac{2}{N}~~\mbox{if}~~
%1<r<\frac{N+2}{N},\\
% >1+\frac{(N+2-2r)^+}{N+2}~~~~~~\mbox{if}~~
% \frac{N+2}{2}\geq r\geq\frac{N+2}{N},\\
% \geq1~~~~~~\mbox{if}~~ r>\frac{N+2}{2},\\
% \end{array}
%\end{equation}
%(iii)~$q+r=r< m+\disp\disp\frac{2}{N};$
%$m(p-1)\leq q+1-\disp\frac{p}{N}$
%
%(iii)$q+r>\max\{r,m\}$, $r>1$ and $m> q+r-\disp\disp\frac{2}{N}$;
%
%
%(iv)$q+r>\max\{r,m\}$, $r>1$ and $m\leq q+r-\disp\disp\frac{2}{N}$ and $N\leq2$;
%
%\noindent
then there exists a triple $(u,v,w)\in (C^{2,1}
(\bar{\Omega}\times(0,\infty)))^3$ which solves \dref{1.1} in the classical sense. Moreover, $u$ and $v$  are bounded in $\Omega\times(0,\infty)$.
%where $C^{\frac{1}{\frac{N}{2}+1}}_{\frac{N}{2}+1}$ is a positive constant which is corresponding to the maximal sobolev
%regularity.
%Moreover, both $u$, $v$  and $w$ are bounded in $\Omega\times(0,\infty)$.
\end{theorem}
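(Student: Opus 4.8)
The plan is to construct the solution via a standard local existence and extension argument, and then to rule out finite-time blow-up by deriving, in successive steps, a priori bounds that ultimately control $\|u(\cdot,t)\|_{L^\infty(\Omega)}$ on any finite time interval. Local existence of a classical solution $(u,v,w)$ on a maximal interval $[0,T_{\max})$, together with the blow-up criterion asserting that if $T_{\max}<\infty$ then $\|u(\cdot,t)\|_{L^\infty(\Omega)}\to\infty$ as $t\nearrow T_{\max}$, follows from fixed-point arguments in suitable parabolic Hölder spaces exactly as in \cite{PangPang1,Tao79477}; I would quote this. The basic elementary bounds come first: nonnegativity of $u,v,w$ by the maximum principle, the pointwise bound $0\le w\le \max\{1,\|w_0\|_{L^\infty(\Omega)}\}$ from the ODE structure of the third equation (using $v\ge0$), the mass bound $\int_\Omega u\le C$ by integrating the first equation and using the logistic term, and hence $\|v(\cdot,t)\|_{L^1(\Omega)}\le C$. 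These require only $\mu>0$, not $\mu$ large.

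The substitution $a:=ue^{-\xi w}$ is the key device inherited from \cite{Tao79477,PangPang1}: it removes the haptotactic term $\nabla\cdot(u\nabla w)$ at the cost of introducing the factor $e^{-\xi w}$ (bounded above and below) and modified lower-order terms involving $w_t$, i.e.\ $vw$ and $\eta w(1-u-w)$; crucially $-\xi a(vw)\le 0$ helps, and the genuinely dangerous new term is $\xi\eta a w(1-u-w)$, whose sign is controlled by the logistic dissipation $\mu u(1-u-w)$ present in the $u$-equation. The heart of the argument is then an $L^p$-estimate for $a$ (equivalently $u$): test the $a$-equation with $a^{p-1}$ to get, for every $p>1$,
$$
\frac{1}{p}\frac{d}{dt}\int_\Omega a^p + \frac{p-1}{2}\int_\Omega a^{p-2}|\nabla a|^2 \le C\chi^2(p-1)\int_\Omega a^p|\nabla v|^2 + (\text{logistic-controlled terms}).
$$
The chemotaxis integral $\int_\Omega a^p|\nabla v|^2$ is the crux, and here is where I depart from \cite{PangPang1}: instead of demanding $\mu$ large to absorb it, I would exploit that in $\mathbb{R}^2$ the semigroup $e^{t\Delta}$ acting on the $v$-equation gives $\|\nabla v(\cdot,t)\|_{L^q(\Omega)}$ bounds for all $q<\infty$ once $\|u(\cdot,t)\|_{L^r(\Omega)}$ is known for suitable $r$, and conversely a Gagliardo–Nirenberg interpolation $\int_\Omega a^p|\nabla v|^2 \le \|\nabla v\|_{L^{2q}}^2\|a^{p/2}\|_{L^{2q'}}^2$ combined with $\|a^{p/2}\|_{L^{2q'}}^2 \le \varepsilon\|\nabla a^{p/2}\|_{L^2}^2 + C_\varepsilon(\|a^{p/2}\|_{L^2}^2 + \ldots)$ lets the diffusion term absorb the top-order part. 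This must be run as a coupled bootstrap: one estimates $\int_\Omega u^p + \int_\Omega|\nabla v|^{2q}$ simultaneously, using the mass bound as the base case ($p$ slightly above $1$, via the two-dimensional smoothing $\|\nabla v\|_{L^2}$-boundedness that follows from $\|u\|_{L^1}$ and an energy estimate), and then iterating $p\to\infty$.

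The main obstacle, and the technical content of the paper, is precisely making this coupled $L^p$–$L^{2q}$ estimate close for \emph{arbitrary} $p$ without a largeness condition on $\mu$: one must track carefully how the constants depend on $p$ and $q$, ensure the logistic terms $-\mu\int_\Omega u^{p+1}$ (coming from $\mu u(1-u-w)$ tested against $u^{p-1}$) are genuinely available to dominate the bad contributions $\xi\eta\int_\Omega u^p$ and the cross terms, and verify that the critical two-dimensional Gagliardo–Nirenberg exponents line up so that the $\varepsilon$-absorption into $\int_\Omega|\nabla a^{p/2}|^2$ and $\int_\Omega|\nabla|\nabla v|^q|^2$ is legitimate. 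Once $\|u(\cdot,t)\|_{L^p(\Omega)}\le C(p,T)$ for all finite $p$ and all $T<\infty$, standard parabolic $L^p$–$W^{2,p}$ theory applied to the $v$-equation gives $\nabla v\in L^\infty$ locally in time, whereupon a Moser-type iteration on the $u$-equation yields $\|u(\cdot,t)\|_{L^\infty(\Omega)}\le C(T)$ on $[0,T_{\max})$. This contradicts the blow-up criterion unless $T_{\max}=\infty$, giving global existence; boundedness of $u$ and $v$ then follows from the same estimates made uniform in $t$, and uniqueness follows from a routine Gronwall argument on the difference of two solutions.
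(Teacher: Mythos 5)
Your skeleton (the substitution $a=ue^{-\xi w}$, an $L^p$-bound for $a$, semigroup estimates for $\nabla v$, Moser iteration, extensibility) matches the paper, but the decisive step is left as an assertion rather than proved, and it is exactly the step that distinguishes this result from \cite{PangPang1}. You say the dangerous $\eta$-term is ``$\xi\eta\int_\Omega u^p$'' whose sign is ``controlled by the logistic dissipation''; in fact, testing the $a$-equation with $e^{\xi w}a^{p-1}$ produces the zero-order contribution $\int_\Omega e^{2\xi w}a^{p+1}\,[(p-1)\xi\eta w-p\mu]$, i.e.\ the remodeling term is of the \emph{same} order $p+1$ as the logistic dissipation, and for small $\mu$ it dominates once $p$ is moderately large. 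Likewise, the chemotaxis integral $\int_\Omega a^p|\nabla v|^2$ cannot be closed at the base level by the coupled $\int u^p+\int|\nabla v|^{2q}$ bootstrap you sketch: with only the mass bound available, the two-dimensional Gagliardo--Nirenberg absorption of the resulting $\int a^{p+1}$-type terms into $\|\nabla a^{p/2}\|_{L^2}^2$ is exactly critical (exponent $2$), so it does not go through for arbitrary data without some smallness — this is precisely the obstruction you label ``the main obstacle and the technical content of the paper'' and then do not resolve. The paper's mechanism is concrete: it works first with an exponent $p_0>1$ \emph{close to} $1$, where both bad coefficients carry the vanishing factor $(p-1)$; it represents the differential inequality by variation of constants with weight $e^{-(p-1)(t-s)}$, converts $\int|\nabla v|^{2(p+1)}$ via Gagliardo--Nirenberg (using the $\|\nabla v\|_{L^2}$ bound) and the weighted maximal Sobolev regularity of Lemma \ref{lemma45xy1222232} back into $\int e^{2\xi w}a^{p+1}$, and optimizes the Young parameter through Lemma \ref{lemma45630223116} so that the recycled coefficient equals $\frac{p(p-1)\chi^2}{2}(C_7C_{p+1})^{1/(p+1)}$, which is beaten by $p\mu$ for $p_0$ near $1$ for \emph{any} $\mu>0$; only afterwards does it bootstrap to all $p$ by absorbing $\int a^{p+1}$ into the diffusion via interpolation against the now-available $L^{p_0}$ bound (subcritical exponent $2(p+1-p_0)/(p+1)<2$). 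Without some such quantitative device exploiting $p\to1^+$, your plan reduces to the situation of \cite{PangPang1}, where largeness of $\mu$ is needed.

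A secondary gap: the extensibility criterion you quote is not the one the local theory provides. Lemma \ref{lemma70} (from \cite{PangPang1}) requires that $\|a(\cdot,t)\|_{L^\infty(\Omega)}+\|\nabla w(\cdot,t)\|_{L^5(\Omega)}$ blow up as $t\nearrow T_{max}$, so after the $L^\infty$ bound on $a$ and $\|\nabla v\|_{L^\infty}$ one must still control $\|\nabla w\|_{L^5}$ (the paper does this by invoking Lemmata 3.5--3.6 of \cite{PangPang1}); your proposal omits any control of $\nabla w$. Also, your bootstrap as described yields constants $C(p,T)$, which gives global existence but not the uniform-in-time boundedness of $u$ and $v$ asserted in the theorem; the paper's weights $e^{-(p-1)(t-s)}$ and the decaying semigroup estimates are what make the bounds $t$-independent.
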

\begin{remark}
%(i) %We find that the coefficient $b$ of logistic source will not be used in the case of $\max\{q+1,p,2p-m\}<\max\{m+\displaystyle\disp\frac{2}{N},r\}$.
%Obviously, if $r=2$ and
%$\mu>\frac{(N-2)_{+}}{N}\chi C^{\frac{1}{\frac{N}{2}+1}}_{\frac{N}{2}+1}$, hence Theorem \ref{theorem3}
%extends   the results of  Theorem 1.1  of Cao (\cite{Cao}), which proved the boundedness in the case $N=3$,
%$r=2$ and
%$\mu$ is appropriately large.
%
%
% (ii) %We find that the coefficient $b$ of logistic source will not be used in the case of $\max\{q+1,p,2p-m\}<\max\{m+\displaystyle\disp\frac{2}{N},r\}$.
%Obviously, if $r=2$ and
%$\mu$ is appropriately large, hence Theorem \ref{theorem3}
%extends   the results of  Theorem 1.1  of Wang and  Ke (\cite{Wangrrssdeeefffjkk1}), which proved the boundedness of the solutions in the case $3\leq N\leq8$,
%$r=2$ and
%$\mu$ is appropriately large.

%(iii) Obviously, if $r>2,$ then,
%$2<\frac{N+2}{2}$, therefore, Theorem \ref{theorem3} (partly) extends  the results of  Theorem 1.1  of   Zheng (\cite{Zhenssdssdddfffgghjjkk1}), which showed the
% the boundedness of the solutions
%in
% the cases  $r>\frac{N+2}{2}$.

 (i) If $w\equiv0$, (the PDE system \dref{1.1} is reduced to the chemotaxis-only system), it
is not difficult to obtain that the solutions under the conditions of Theorem \ref{theorem3} are
uniformly bounded when $N=2$, which  coincides with the results of   Osaki et al. (\cite{Osakix391}).

 (ii) From Theorem \ref{theorem3}, we derive that  solutions of model \dref{1.1}
are global and bounded for any $\eta=0,\mu>0$ and $N\leq2$, which coincides with the result of  Tao (\cite{Taox3201}).

% % enlarges the parameter range
%%%$\frac{2}{N}\leq\alpha+\beta<{r}-1+\alpha,$  Ishida et al. (\cite{Ishida}), who proved the possibility of global, in the cases $f(u)\equiv0$.
%%%Indeed, in case of $q+1<m+\frac{2}{N}$,
%%%Theorem \ref{theorem3} shows that solutions of model \dref{1.1}
%%%are global and bounded for any $b>0$, which implies large diffusion exponent $m$ benefits the boundedness of solutions.
%%
%% In the absence of the logistic source (i.e. $f(u)\equiv0$) for problem \dref{1.1}, it
%%is not difficult to obtain that the solutions under the conditions of Theorem \ref{theorem3} are
%%uniformly bounded, which coincides with the result of  Osaki et al. \cite{Osakix391},
%%
%%
%
% (iii) If $w\equiv0$, Theorem \ref{theorem3}  extends the results of Theorem 1.1 of Wang et al. (\cite{Wang79}).

%
% (vi) With the help of precise estimation,
% the ideas of our paper can also be used to deal with the three-dimensional
%chemotaxis-fluid system with (generalized) logistic
%source.
\end{remark}
Without loss of generality, we may assume $\tau=1$ in \dref{1.1}, since, for $\tau>0$ can be proved very similarly.

The plan of this paper is as follows. In Section 2, we give some basic results and some preliminary
lemmata  as a preparation for the arguments in the later sections. In Section 3, firstly, by using the technical lemma (Lemma \ref{lemma45630223116}) and employing the variation-of-constants formula, we may establish the boundedness of
 $\int_{\Omega}{a^{q_0}}(q_0 > 1),$ where $a=ue^{-\xi w}.$
 In addition, we shall involve the variation-of-constants formula and $L^p$-estimate techniques
%for variable $v$
to gain
%
%
% follow standard testing
%procedures to gain some basic information on the time evolution of each of the summands in (1.19)
%separately. It will turn out in Section 3.3 and in Section 3.4 that under suitable conditions on the
%relationship between the exponents p > 1 and q > 1 herein it is possible to estimate the respective
%ill-signed contributions appropriately, and thereby establish an ODI for y containing an absorptive
%linear term and thus implying an upper bound for y (see Lemma 3.14 and also Lemma 3.13).
%
%    Involving the variation-of-constants formula
%for $v$, we have
%$$
%v(t+\tau)=e^{-\tau(A+1)}v(t) +\int_t^{t+\tau}e^{-(t+\tau-s))(A+1)}u(s) ds,~~ t\in(0, T_{max}-\tau)
%$$
%for some $\tau>0$ and the operator $A$ defined below.  Then with the help of boundedness of $\int_{0}^T\int_{\Omega}u^rdxdt$, we can obtain
%\begin{equation}
%\int_{\Omega}|\nabla {v}|^{q}\leq C~~\mbox{for all}~~ \mbox{and}~~q\in[1,\frac{N{q_0}}{(N-{q_0})^+}),
%\label{hnjmkk7101.2x1}
%\end{equation}
%%and
%%\begin{equation}
%%\int_t^{t+\tau}\int_{\Omega}|A {v_\varepsilon}|^2   \leq C~~\mbox{for all}~~ t\in(0, T_{max}-\tau),
%%\label{cz2.5ghju48cfg924}
%%\end{equation}
%%for $\sigma\in[1,\max\{\frac{Nr}{(N+2-r)^+},\frac{N}{N-1}\})$,
% and thereby
% establish the  a priori estimates of the
%functional
the boundedness of $\int_{\Omega}a^{p}(p>1)$.
%\begin{equation}
%\int_{\Omega}u^{p}\leq C~~\mbox{for all}~~ \mbox{and}~~p>1.
%\label{hnjmkk71ccfvvv01.2x1}
%\end{equation}
Finally, using  the Alikakos--Moser iteration, we finally established the $L^\infty(\Omega)$ bound of $a$ (see the proof of  Theorem \ref{theorem3}).

\section{Preliminaries}
Before formulating our main results, we first recall some preliminary
lemmas used
throughout this paper.
 %explain the notations and conventions used
%throughout this paper. Firstly, we first collect some known
%inequalities and facts which will be frequently used later.
%
%Firstly,
%we recall some preliminary
%lemmas, which play essential roles in our subsequent analysis.
To begin with, let us collect some basic solution properties which essentially have already been used
in \cite{Horstmann791} (see also Winkler \cite{Winkler792}, Zhang and Li \cite{Zhangddff4556}).
\begin{lemma}(\cite{Horstmann791})\label{lemmaggbb41ffgg}
For $p\in(1,\infty)$, let $A := A_p$ denote the sectorial operator defined by
\begin{equation}\label{hnjmkfgbhnn6291}
A_pu :=-\Delta u~~\mbox{for all}~~u\in D(A_p) :=\{\varphi\in W^{2,p}(\Omega)|\frac{\partial\varphi}{\partial \nu}|_{\partial\Omega}=0\}.
\end{equation}
The operator $A + 1$ possesses fractional powers $(A + 1)^{\alpha}(\alpha\geq0)$, the domains of
which have the embedding properties
\begin{equation}\label{hnjmkfgbhnn6291}
D((A+1)^\alpha)\hookrightarrow W^{1,p}(\Omega)~~\mbox{if}~~\alpha>\frac{1}{2}.
\end{equation}

If $m\in \{0, 1\}$, $p\in [1,\infty]$ and $q \in (1,\infty)$ with $m-\frac{N}{p} < 2\alpha-\frac{N}{q} $, then we have
\begin{equation}\label{hnssedrrffjmkfgbhnn6291}
\|u\|_{W^{m,p}(\Omega)}\leq C\|(A+1)^\alpha u\|_{L^{q}(\Omega)}~~~\mbox{for all}~~u\in D((A+1)^\alpha),
\end{equation}
 where $C$ is a positive constant.
The fact that the spectrum of $A$ is a $p$-independent countable set of positive real numbers
$0 = \mu_0 < \mu_1 <\mu_2 <\cdots $
entails the following consequences:
For all $1\leq p < q < \infty$ and $u\in L^p(\Omega)$ the
general $L^p$-$L^q$ estimate
\begin{equation}\label{gbhnhnjmkfgbhnn6291}
\|(A+1)^\alpha e^{-tA}u\|_{L^q(\Omega)}\leq ct^{-\alpha-\frac{N}{2}(\frac{1}{p}-\frac{1}{q})}e^{(1-\mu)t}\|u\|_{L^p(\Omega)}
\end{equation}
for any $t > 0$ and
$\alpha\geq0$ with some 	$\mu > 0.$
\end{lemma}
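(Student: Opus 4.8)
The plan is to deduce all three assertions from the sectoriality of the Neumann Laplacian together with the standard machinery of analytic semigroups, fractional powers, and Gaussian heat-kernel bounds; since the statement is quoted from \cite{Horstmann791}, the task is to reconstruct this classical chain of arguments. First I would verify that $A_p=-\Delta$, equipped with the domain in the definition above, is sectorial on $L^p(\Omega)$ for every $p\in(1,\infty)$. This rests on the resolvent estimate $\|(\lambda+A_p)^{-1}\|_{L^p\to L^p}\le C/|\lambda|$ for $\lambda$ in a sector around the positive real axis, which follows from $L^p$ elliptic regularity for the Neumann problem on the smooth bounded domain $\Omega$. Consistency of the resolvents on $L^p\cap L^q$ gives the $p$-independence of the spectrum, and elliptic regularity identifies it with the countable set $0=\mu_0<\mu_1<\cdots$ of Neumann eigenvalues. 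Consequently $A+1$ has spectrum contained in $[1,\infty)$, so its fractional powers $(A+1)^\alpha$ are well defined via the Dunford--Balakrishnan integral and enjoy the usual closedness and composition properties.

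For the two embedding statements I would pass through the interpolation characterisation of the fractional-power scale. The domain $D((A+1)^\alpha)$ coincides, with equivalent norms, with the complex interpolation space $[L^p(\Omega),D(A_p)]_\alpha$; since $D(A_p)=\{\varphi\in W^{2,p}(\Omega):\partial_\nu\varphi=0\}$, this interpolation space is the Bessel-potential space $H^{2\alpha,p}(\Omega)$ (the Neumann trace constraint entering only when $2\alpha>\tfrac32$ and playing no role in the inclusions below). The embedding into $W^{1,p}(\Omega)$ for $\alpha>\tfrac12$ is then the inclusion $H^{2\alpha,p}(\Omega)\hookrightarrow W^{1,p}(\Omega)$, valid as soon as $2\alpha>1$. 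The mixed estimate $\|u\|_{W^{m,p}(\Omega)}\le C\|(A+1)^\alpha u\|_{L^q(\Omega)}$ then follows by composing $D((A+1)^\alpha)\hookrightarrow H^{2\alpha,q}(\Omega)$ with the Sobolev embedding $H^{2\alpha,q}(\Omega)\hookrightarrow W^{m,p}(\Omega)$, which holds precisely under the subcritical condition $m-\tfrac{N}{p}<2\alpha-\tfrac{N}{q}$.

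It remains to establish the $L^p$--$L^q$ smoothing estimate, for which I would combine two ingredients. Analyticity of the semigroup generated by $A+1$ together with the spectral bound $\mathrm{Re}\,\sigma(A+1)\ge1$ yields $\|(A+1)^\alpha e^{-\tau(A+1)}\|_{L^q\to L^q}\le C_\alpha\,\tau^{-\alpha}e^{-(1-\varepsilon)\tau}$ for each small $\varepsilon>0$; writing $e^{-\tau A}=e^{\tau}e^{-\tau(A+1)}$ turns this into $\|(A+1)^\alpha e^{-\tau A}\|_{L^q\to L^q}\le C_\alpha\,\tau^{-\alpha}e^{\varepsilon\tau}$. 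Separately, the Neumann heat kernel on the bounded smooth domain obeys a Gaussian upper bound, giving the ultracontractive smoothing $\|e^{-\tau A}\|_{L^p\to L^q}\le C\,\tau^{-\frac{N}{2}(\frac1p-\frac1q)}$ for $\tau\in(0,1]$ and a uniform bound for $\tau\ge1$. Splitting $e^{-tA}=e^{-(t/2)A}e^{-(t/2)A}$ and applying the $L^p\to L^q$ smoothing to one factor and the $L^q\to L^q$ fractional-power bound to the other produces the power $t^{-\alpha-\frac{N}{2}(\frac1p-\frac1q)}$ together with an exponential factor $e^{(1-\mu)t}$ with $\mu:=1-\varepsilon>0$, the regime $t\ge1$ being disposed of directly since the negative power is then bounded and the exponential dominates. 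The step I expect to be the main obstacle is exactly this last one: pinning down the identification of $D((A+1)^\alpha)$ with the Bessel-potential scale so that the Sobolev embeddings apply without endpoint ambiguity, and then bookkeeping the two exponential contributions carefully enough that their product collapses to the single factor $e^{(1-\mu)t}$ with a genuinely positive $\mu$, as asserted.
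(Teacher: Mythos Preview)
The paper does not supply a proof of this lemma at all: it is stated with the citation \cite{Horstmann791} attached and then used as a black box, so there is no ``paper's own proof'' to compare against. Your reconstruction of the classical argument---sectoriality of the Neumann Laplacian via elliptic $L^p$ regularity, identification of the fractional-power domains with the Bessel-potential scale through complex interpolation (which does require bounded imaginary powers, a property the Neumann Laplacian enjoys), and the $L^p$--$L^q$ smoothing via Gaussian heat-kernel bounds combined with analytic-semigroup decay---is the standard route and is sound.

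One small remark on your bookkeeping of the exponential factor: your computation gives $\mu=1-\varepsilon$, hence $e^{(1-\mu)t}=e^{\varepsilon t}$, which grows slowly. This is consistent with the lemma as written, but note that in the paper's actual applications (e.g.\ the variation-of-constants estimate for $v$) the relevant semigroup is $e^{-t(A+1)}$ rather than $e^{-tA}$, and the extra factor $e^{-t}$ turns the net exponential into genuine decay $e^{-\mu(t-s)}$. So the form of the estimate stated in the lemma is tailored to be composed with that shift; your derivation already contains this, just be aware that the growing exponential in the bare statement is not a defect.
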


%\begin{lemma}(\cite{Hajaiej,Ishida})\label{lemma41ffgg}
%Let  $s\geq1$ and $q\geq1$.
%Assume that $p >0$ and $a\in(0,1)$ satisfy
%$$\frac{1}{2}-\frac{p}{N}=(1-a)\frac{q}{s}+a(\frac{1}{2}-\frac{1}{N})~~\mbox{and}~~p\leq a.$$
%Then there exist $c_0, c'_0 >0$ such that for all $u\in W^{1,2}(\Omega)\cap L^{\frac{s}{q}}(\Omega)$,
%$$\|u\|_{W^{p,2}(\Omega)} \leq c_{0}\|\nabla u\|_{L^{2}(\Omega)}^{a}\|u\|^{1-a}_{L^{\frac{s}{q}}(\Omega)}+c'_0\|u\|_{L^{\frac{s}{q}}(\Omega)}.$$
%\end{lemma}
In deriving some preliminary estimates for $v$, we shall make use of  following the property referred to as
a variation of Maximal Sobolev Regularity (see e.g. Theorem 3.1 of \cite{Hieber} or \cite{Cao}).
%Although most of the statements below are essentially well known
%(see e.g. \cite{Cao,Hieber}), we could not find a precise
%reference in the literature that covers all that is necessary for our purpose; therefore we include a
%short proof here.
\begin{lemma}\label{lemma45xy1222232}
Suppose  $\gamma\in (1,+\infty)$, $g\in L^\gamma((0, T); L^\gamma(
\Omega))$. %and  $v_0\in W^{2,\gamma}(\Omega)$
%such that $\disp\frac{\partial v_0}{\partial \nu}=0$.
Let $v$ be a solution of the following initial boundary value
% \begin{equation}
% \left\{\begin{array}{ll}
%v_t -\Delta v=g,~~(x, t)\in
%\Omega\times(0, T ),\\
%\disp\frac{\partial v}{\partial \nu}=0,~~(x, t)\in
% \partial\Omega\times(0, T ),\\
%v(x,0)=v_0(x),~~~(x, t)\in
% \Omega.\\
% \end{array}\right.\label{1.3xcx29}
%\end{equation}
%Then there exists a positive constant $\delta_0$ such that
%\begin{equation}
%\begin{array}{rl}
%&\disp{\int_0^T\|v(\cdot,t)\|^{\gamma}_{L^{\gamma}(\Omega)}dt+\int_0^T\|v_t(\cdot,t)\|^{\gamma}_{L^{\gamma}(\Omega)}dt+\int_0^T\|\Delta v(\cdot,t)\|^{\gamma}_{L^{\gamma}(\Omega)}dt}\\
%\leq&\disp{\delta_0\left(\int_0^T\|g(\cdot,t)\|^{\gamma}_{L^{\gamma}(\Omega)}dt+\|v_0(\cdot,t)\|^{\gamma}_{L^{\gamma}(\Omega)}+\|\Delta v_0(\cdot,t)\|^{\gamma}_{L^{\gamma}(\Omega)}\right).}\\
%%\leq&\disp{\int_\Omega (|f(x,t)|+L)|u|^{q+1} dx}\\
%%\leq&\disp{\int_\Omega (k_1|u|^\alpha+k_2)|u|^{q+1} dx}\\
%%=&\disp{\int\int\triangle J(x-y)u(y)(u|u|^q(x))dydx.}
%%\leq&\disp{(|g|_{L^\infty(0,\omega; L^\infty(\Omega))}+1)(|\Omega|+1)^{\frac{1}{2}}|u|^{q+1}_{q+2}.}\\
%%\leq&\disp{\frac{q+1}{q+2}((|f|_{L^\infty(0,\omega; L^\infty(\Omega))}+1)(|\Omega|+1)^{\frac{1}{2}})^{\frac{q+2}{q+1}}|u|^{q+2}_{q+2}+\frac{1}{q+2}}\\
%%\leq&\disp{\frac{q+1}{q+2}(|f|_{L^\infty(0,\omega; L^\infty(\Omega))}+1)^2(|\Omega|+1)|u|^{q+2}_{q+2}+\frac{1}{q+2}.}\\
%\end{array}
%\label{cz2.5bbv}
%\end{equation}
%On the other hand,
%assuming $v$ is a solution of the following initial boundary value
 \begin{equation}
 \left\{\begin{array}{ll}
v_t -\Delta v+v=g,~~~(x, t)\in
 \Omega\times(0, T ),\\
\disp\frac{\partial v}{\partial \nu}=0,~~~(x, t)\in
 \partial\Omega\times(0, T ),\\
v(x,0)=v_0(x),~~~(x, t)\in
 \Omega.\\
 \end{array}\right.\label{1.3xcx29}
\end{equation}
Then there exists a positive constant $C_\gamma$ such that if $s_0\in[0,T)$, $v(\cdot,s_0)\in W^{2,\gamma}(\Omega)(\gamma>N)$ with $\disp\frac{\partial v(\cdot,s_0)}{\partial \nu}=0,$ then,
\begin{equation}
\begin{array}{rl}
&\disp{\int_{s_0}^Te^{\gamma s}\| v(\cdot,t)\|^{\gamma}_{W^{2,\gamma}(\Omega)}ds\leq C_\gamma\left(\int_{s_0}^Te^{\gamma s}
\|g(\cdot,s)\|^{\gamma}_{L^{\gamma}(\Omega)}ds+e^{\gamma s_0}(\|v_0(\cdot,s_0)\|^{\gamma}_{W^{2,\gamma}(\Omega)})\right).}\\
%\leq&\disp{\int_\Omega (|f(x,t)|+L)|u|^{q+1} dx}\\
%\leq&\disp{\int_\Omega (k_1|u|^\alpha+k_2)|u|^{q+1} dx}\\
%=&\disp{\int\int\triangle J(x-y)u(y)(u|u|^q(x))dydx.}
%\leq&\disp{(|g|_{L^\infty(0,\omega; L^\infty(\Omega))}+1)(|\Omega|+1)^{\frac{1}{2}}|u|^{q+1}_{q+2}.}\\
%\leq&\disp{\frac{q+1}{q+2}((|f|_{L^\infty(0,\omega; L^\infty(\Omega))}+1)(|\Omega|+1)^{\frac{1}{2}})^{\frac{q+2}{q+1}}|u|^{q+2}_{q+2}+\frac{1}{q+2}}\\
%\leq&\disp{\frac{q+1}{q+2}(|f|_{L^\infty(0,\omega; L^\infty(\Omega))}+1)^2(|\Omega|+1)|u|^{q+2}_{q+2}+\frac{1}{q+2}.}\\
\end{array}
\label{cz2.5bbv114}
\end{equation}
\end{lemma}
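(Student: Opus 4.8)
\emph{Proof strategy.} The plan is to deduce the weighted, time-shifted estimate \dref{cz2.5bbv114} from the ordinary (unweighted, $s_0=0$) maximal Sobolev regularity for the Neumann heat equation by means of an exponential rescaling in time: the references \cite{Hieber,Cao} provide the unweighted building block, and the rescaling is exactly what produces the ``variation'' claimed here.

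First I would set $V(x,t):=e^{t}v(x,t)$ and $G(x,t):=e^{t}g(x,t)$. Using the first line of \dref{1.3xcx29}, a one-line computation gives $V_t=e^{t}v+e^{t}v_t=e^{t}\Delta v+e^{t}g=\Delta V+G$, so that $V$ solves
\[
\left\{\begin{array}{ll}
V_t-\Delta V=G,&(x,t)\in\Omega\times(s_0,T),\\[1mm]
\disp\frac{\partial V}{\partial\nu}=0,&(x,t)\in\partial\Omega\times(s_0,T),\\[1mm]
V(x,s_0)=e^{s_0}v(x,s_0),&x\in\Omega.
\end{array}\right.
\]
Note that the absorption term $+v$ has been swallowed by the weight, so that the operator is the pure Neumann Laplacian, and that the new initial datum $V(\cdot,s_0)=e^{s_0}v(\cdot,s_0)$ still lies in $W^{2,\gamma}(\Omega)$ with $\partial_\nu V(\cdot,s_0)=0$.

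Next I would invoke maximal $L^{\gamma}$-regularity for the inhomogeneous Neumann heat problem on $(s_0,T)$ (Theorem~3.1 of \cite{Hieber}, in the form used in \cite{Cao}). After shifting the time origin from $s_0$ to $0$ — which is legitimate since $\partial_t-\Delta$ commutes with time translations, so the resulting constant does not see the value of $s_0$ — one obtains a constant $C_\gamma>0$ (depending on $\gamma$, on $\Omega$ and on $T$) with
\[
\int_{s_0}^{T}\|V(\cdot,t)\|^{\gamma}_{W^{2,\gamma}(\Omega)}\,dt\ \le\ C_\gamma\left(\int_{s_0}^{T}\|G(\cdot,t)\|^{\gamma}_{L^{\gamma}(\Omega)}\,dt+\|V(\cdot,s_0)\|^{\gamma}_{X_{\gamma}}\right),
\]
where $X_\gamma=(L^\gamma(\Omega),W^{2,\gamma}(\Omega))_{1-\frac1\gamma,\,\gamma}$ is the trace space of the maximal-regularity class $\{V:\ V\in L^\gamma((s_0,T);W^{2,\gamma}(\Omega)),\ V_t\in L^\gamma((s_0,T);L^\gamma(\Omega))\}$. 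Since $1-\frac1\gamma\in(0,1)$, there is a continuous embedding $W^{2,\gamma}(\Omega)\hookrightarrow X_\gamma$, and the compatibility condition $\partial_\nu V(\cdot,s_0)=0$ is exactly what guarantees membership of $V(\cdot,s_0)$ in $X_\gamma$ (this enters only when $\gamma>3$; for $\gamma\le3$ no boundary condition appears in the trace space, and recall $\gamma>N=2$ here); consequently the last term above is dominated by $C_\gamma\|V(\cdot,s_0)\|^{\gamma}_{W^{2,\gamma}(\Omega)}$.

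Finally I would undo the rescaling. From $\|V(\cdot,t)\|^{\gamma}_{W^{2,\gamma}(\Omega)}=e^{\gamma t}\|v(\cdot,t)\|^{\gamma}_{W^{2,\gamma}(\Omega)}$, $\|G(\cdot,t)\|^{\gamma}_{L^{\gamma}(\Omega)}=e^{\gamma t}\|g(\cdot,t)\|^{\gamma}_{L^{\gamma}(\Omega)}$ and $\|V(\cdot,s_0)\|^{\gamma}_{W^{2,\gamma}(\Omega)}=e^{\gamma s_0}\|v(\cdot,s_0)\|^{\gamma}_{W^{2,\gamma}(\Omega)}$, substituting these three identities into the previous display reproduces \dref{cz2.5bbv114} verbatim. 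The step that really requires care — and hence the main obstacle — is the faithful use of the inhomogeneous maximal-regularity theorem: extracting the initial-datum contribution in the correct interpolation (trace) norm, then controlling it by $\|v(\cdot,s_0)\|_{W^{2,\gamma}(\Omega)}$ via the compatibility condition, and verifying that the constant is independent of $s_0$ (which is the content of the time-translation reduction). Everything else is routine bookkeeping with the weight $e^{\gamma t}$.
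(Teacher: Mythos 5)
Your proposal is correct and follows essentially the same route as the paper: the substitution $V=e^{t}v$ (the paper's $c=e^{s}v$) to absorb the $+v$ term, an application of the Hieber--Pr\"uss maximal Sobolev regularity to the resulting Neumann heat equation, recovery of the full $W^{2,\gamma}$ norm, and a time translation to handle general $s_0$. The only difference is cosmetic: you control the initial-datum term via the trace (interpolation) space and the embedding $W^{2,\gamma}(\Omega)\hookrightarrow X_\gamma$, while the paper bounds it directly by $\|c_0\|_{L^\gamma(\Omega)}+\|\Delta c_0\|_{L^\gamma(\Omega)}$ together with the elliptic $L^p$-estimate.
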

\begin{proof}
Letting $c(x, s) = e^{s}v(x, s)$. Then we derive that $c$ satisfies
\begin{equation}
 \left\{\begin{array}{ll}
c_s(x,s) -\Delta c(x,s)=f(x,s),~~~(x, s)\in
 \Omega\times(0, T ),\\
\disp\frac{\partial c}{\partial \nu}=0,~~~(x, s)\in
 \partial\Omega\times(0, T ),\\
c(x,0)=v_0(x),~~~(x, s)\in
 \Omega,\\
 \end{array}\right.\label{1.3ssderrttgxcx29}
\end{equation}
where $f(x,s)=e^sg(x,s).$
Applying the Maximal Sobolev Regularity (see e.g. Theorem 3.1 of \cite{Hieber}) to $c$, we derive that
\begin{equation}
\begin{array}{rl}
&\disp{\int_{0}^T\|\Delta c(\cdot,s)\|^{\gamma}_{L^{\gamma}(\Omega)}ds+\int_{0}^T\| c(\cdot,s)\|^{\gamma}_{L^{\gamma}(\Omega)}ds+\int_{0}^T\| c_s(\cdot,s)\|^{\gamma}_{L^{\gamma}(\Omega)}ds}\\
\leq &\disp{C_{1,\gamma}\left(\int_{0}^T
\|f(\cdot,s)\|^{\gamma}_{L^{\gamma}(\Omega)}ds+(\|c_0\|^{\gamma}_{L^{\gamma}(\Omega)}+\|\Delta c_0\|^{\gamma}_{L^{\gamma}(\Omega)})\right).}\\
%\leq&\disp{\int_\Omega (|f(x,t)|+L)|u|^{q+1} dx}\\
%\leq&\disp{\int_\Omega (k_1|u|^\alpha+k_2)|u|^{q+1} dx}\\
%=&\disp{\int\int\triangle J(x-y)u(y)(u|u|^q(x))dydx.}
%\leq&\disp{(|g|_{L^\infty(0,\omega; L^\infty(\Omega))}+1)(|\Omega|+1)^{\frac{1}{2}}|u|^{q+1}_{q+2}.}\\
%\leq&\disp{\frac{q+1}{q+2}((|f|_{L^\infty(0,\omega; L^\infty(\Omega))}+1)(|\Omega|+1)^{\frac{1}{2}})^{\frac{q+2}{q+1}}|u|^{q+2}_{q+2}+\frac{1}{q+2}}\\
%\leq&\disp{\frac{q+1}{q+2}(|f|_{L^\infty(0,\omega; L^\infty(\Omega))}+1)^2(|\Omega|+1)|u|^{q+2}_{q+2}+\frac{1}{q+2}.}\\
\end{array}
\label{cz2.5bbvssdefff114}
\end{equation}
Substituting $v$ into the above inequality and changing the variables imply
\begin{equation}
\begin{array}{rl}
&\disp{\int_{0}^Te^{\gamma s}(\| v(\cdot,t)\|^{\gamma}_{L^{\gamma}(\Omega)}+\|\Delta v(\cdot,t)\|^{\gamma}_{L^{\gamma}(\Omega)})ds}\\
\leq &\disp{C_{1,\gamma}\left(\int_{0}^T
e^{\gamma s}
\|g(\cdot,s)\|^{\gamma}_{L^{\gamma}(\Omega)}ds+(\|c_0\|^{\gamma}_{L^{\gamma}(\Omega)}+\|\Delta c_0\|^{\gamma}_{L^{\gamma}(\Omega)})\right).}\\
%\leq&\disp{\int_\Omega (|f(x,t)|+L)|u|^{q+1} dx}\\
%\leq&\disp{\int_\Omega (k_1|u|^\alpha+k_2)|u|^{q+1} dx}\\
%=&\disp{\int\int\triangle J(x-y)u(y)(u|u|^q(x))dydx.}
%\leq&\disp{(|g|_{L^\infty(0,\omega; L^\infty(\Omega))}+1)(|\Omega|+1)^{\frac{1}{2}}|u|^{q+1}_{q+2}.}\\
%\leq&\disp{\frac{q+1}{q+2}((|f|_{L^\infty(0,\omega; L^\infty(\Omega))}+1)(|\Omega|+1)^{\frac{1}{2}})^{\frac{q+2}{q+1}}|u|^{q+2}_{q+2}+\frac{1}{q+2}}\\
%\leq&\disp{\frac{q+1}{q+2}(|f|_{L^\infty(0,\omega; L^\infty(\Omega))}+1)^2(|\Omega|+1)|u|^{q+2}_{q+2}+\frac{1}{q+2}.}\\
\end{array}
\label{cz2.5bbvssdefssderffff114}
\end{equation}
On the other hand, by the elliptic $L^p$-estimate,
\begin{equation}
\|v\|_{W^{2,\gamma}(\Omega)}\leq C_{2,\gamma}(\|\Delta v\|_{L^\gamma(\Omega)}+
\|\Delta v\|_{L^\gamma(\Omega)})~~~\mbox{for any}~~ v\in W^{2,\gamma}(\Omega)~~~\mbox{with}~~ \frac{\partial v}{\partial\nu}=0.
\label{cz2.5bbvssdefssdsderrrerffff114}
\end{equation}
Consequently, combining   \dref{cz2.5bbvssdefssderffff114}
with \dref{cz2.5bbvssdefssdsderrrerffff114},  for any $s_0 > 0,$ replacing $v(t)$ by $v(t + s_0)$, we derive  \dref{cz2.5bbv114}.
\end{proof}

The Young inequality (\cite{Ladyzenskaja710}): Let $1 < p, q < +\infty$,
$\frac{1}{p}+
\frac{1}{q}
= 1$. Then  for any positive constants $a$ and $b$, we have
$$ab\leq
\varepsilon a^p
+
\frac{1}{q}(\varepsilon p)^{-\frac{q}{p}}b^q.$$

The following lemma deals with local-in-time existence and uniqueness of a classical solution for the
problem \dref{1.1} (see \cite{PangPang1}).
%see for example. So we omit it.
% However, we could not find a precise reference in the literature that
%exactly matches to our situation,  thus we include a short proof for the sake of completeness.
\begin{lemma}\label{lemma70} (\cite{PangPang1})
Assume that the nonnegative functions $u_0,v_0,$ and $w_0$ satisfies \dref{x1.731426677gg}
%and such that $\disp\frac{\partial w_0}{\partial \nu} = 0$ on $\partial\Omega$,
for some $\vartheta\in(0,1).$
%$\phi$ satisfies \dref{9161} and \dref{9162}.
%$f:[0,\infty)^2\rightarrow \bar{R}$ is locally
%Lipschitz continuous and such that $f(0,  w)$ = 0 for all  $w\geq0$.
%
%
% with $f(0)\geq0$.
 Then there exists a maximal existence time $T_{max}\in(0,\infty]$ and a triple of  nonnegative functions %$$
% (u,v,w)\in C^0(\bar{\Omega}\times[0,T_{max}))\cap C^{2,1}(\bar{\Omega}\times(0,T_{max}))\times
% C^0((0,T_{max}); C^2(\bar{\Omega}))\times C^{2,1}(\bar{\Omega}\times[0,T_{max}))$$
 $$
\left\{\begin{array}{rl}
&\disp{a\in C^0(\bar{\Omega}\times[0,T_{max}))\cap C^{2,1}(\bar{\Omega}\times(0,T_{max})),}\\
&\disp{v\in C^0(\bar{\Omega}\times[0,T_{max}))\cap C^{2,1}(\bar{\Omega}\times(0,T_{max})),}\\
&\disp{w\in  C^{2,1}(\bar{\Omega}\times[0,T_{max})),}\\
\end{array}\right.
$$
 which solves \dref{1.1}  classically and satisfies
 \begin{equation}0\leq w\leq \rho:=\max\{1,\|w_0\|_{L^\infty(\Omega)}\}
  ~~\mbox{in}~~ \Omega\times(0,T_{max}).
  \label{1.1sddd6ssdddd3072x}
\end{equation}
%such that $(u,v,w)$ is a classical solution of model \dref{1.1630} in $\Omega\times[0,T_{max})$.
Moreover, if  $T_{max}<+\infty$, then
\begin{equation}
\|a(\cdot, t)\|_{L^\infty(\Omega)}+\|\nabla w(\cdot,t)\|_{L^{5}(\Omega)}\rightarrow\infty~~ \mbox{as}~~ t\nearrow T_{max}.
\label{1.163072x}
\end{equation}
\end{lemma}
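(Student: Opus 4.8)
The plan is to reduce the strongly coupled system \dref{1.1} to a form amenable to a contraction-mapping argument by means of the substitution $a := u e^{-\xi w}$, which is exactly the new unknown appearing in the statement. Writing $u = a e^{\xi w}$, a direct computation shows that the two troublesome second-order contributions combine into a single divergence, $\Delta u - \xi\nabla\cdot(u\nabla w) = \nabla\cdot(e^{\xi w}\nabla a)$, so that the haptotaxis term, which is the source of the non-variational difficulty, is absorbed. Using $w_t = -vw+\eta w(1-u-w)$ to eliminate the remaining $\xi a w_t$ coming from $u_t=e^{\xi w}(a_t+\xi a w_t)$, the equation for $a$ takes the form $a_t = e^{-\xi w}\nabla\cdot(e^{\xi w}\nabla a) - \chi e^{-\xi w}\nabla\cdot(ae^{\xi w}\nabla v) + a\,G$, where $G=G(v,w,u)=\mu(1-u-w)+\xi vw-\xi\eta w(1-u-w)$ collects the zero-order reaction terms. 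Together with the unchanged $v_t = \Delta v - v + a e^{\xi w}$ and the ODE $w_t = -vw+\eta w(1-ae^{\xi w}-w)$, this is a system of two semilinear parabolic equations ($a$, $v$) coupled to a pointwise-in-$x$ ODE ($w$), which is the structure I would exploit.

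For the local existence and uniqueness I would run a Banach fixed-point scheme on a short interval $[0,T]$. Fix $R>0$ and consider the closed set $\Sigma_T$ of triples $(\tilde a,\tilde v,\tilde w)$ in a suitable product space (e.g. $\tilde a,\tilde v,\nabla\tilde v\in C^0(\bar\Omega\times[0,T])$ and $\tilde w\in C^0([0,T];C^1(\bar\Omega))$) staying within radius $R$ of the data. Given $(\tilde a,\tilde v,\tilde w)$, I would solve the ODE for $w$ (linear once $\tilde v,\tilde a$ are frozen), then the linear parabolic problem for $v$ with source $\tilde a e^{\xi w}$ via the Neumann heat semigroup, and finally the linear parabolic equation for $a$ with coefficients built from $v,w$; the smoothing estimates of Lemma \ref{lemmaggbb41ffgg} and the maximal regularity of Lemma \ref{lemma45xy1222232} provide, for $T$ small, self-mapping of $\Sigma_T$ and a contraction in a weaker norm. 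The unique fixed point is the local solution. Its regularity is upgraded to $a,v\in C^{2,1}(\bar\Omega\times(0,T_{max}))$ by parabolic Schauder theory once the coefficients are shown to be Hölder continuous, while $w\in C^{2,1}(\bar\Omega\times[0,T_{max}))$ follows by differentiating its ODE in $x$ and using the gained regularity of $v$ and $a$ (here $w_0\in C^{2+\vartheta}$ is what permits inclusion of $t=0$). Maximality of $T_{max}$ is obtained in the usual way by extending as long as possible.

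Nonnegativity and the explicit bound \dref{1.1sddd6ssdddd3072x} come from the structure of the equations. Since the $w$-equation reads $w_t = w\,[-v + \eta(1-u-w)]$, the integrating-factor representation $w(x,t) = w_0(x)\exp\big(\int_0^t[-v+\eta(1-u-w)]\,ds\big)\ge 0$ gives $w\ge 0$ whenever $w_0\ge 0$; nonnegativity of $a$ (hence of $u=ae^{\xi w}$) follows from the maximum principle for its parabolic equation, and $v\ge 0$ from $u\ge0$ via the heat semigroup. For the upper bound I would use $-v\le 0$ and $u\ge 0$ to get $w_t \le \eta w(1-w)$ and compare with the logistic ODE $W_t=\eta W(1-W)$, $W(0)=\rho$: since $\rho\ge 1$ forces $W_t\le 0$ whenever $W\ge 1$, we obtain $W(t)\le\rho$ and hence $0\le w\le\rho=\max\{1,\|w_0\|_{L^\infty(\Omega)}\}$.

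The principal \textbf{obstacle} is the extensibility criterion \dref{1.163072x}, which I would prove by contradiction: assume $T_{max}<\infty$ yet $\|a(\cdot,t)\|_{L^\infty(\Omega)}+\|\nabla w(\cdot,t)\|_{L^5(\Omega)}\le M$ on $[0,T_{max})$, and show that every norm needed to re-apply the local theory near $T_{max}$ stays bounded, contradicting maximality. From $\|a\|_{L^\infty}\le M$ and $0\le w\le\rho$ one controls $\|u\|_{L^\infty}$; feeding this into the $v$-equation and using Lemma \ref{lemmaggbb41ffgg} yields bounds on $v$ and $\nabla v$, and via Lemma \ref{lemma45xy1222232} on $v$ in $W^{2,\gamma}$ in a time-integrated sense. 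The delicate point is the $a$-equation: its drift $\xi\nabla w-\chi\nabla v$ and the mixed reaction term both involve $\nabla w$, and it is precisely the assumed $L^5$-control of $\nabla w$ (with $5>N=2$) that, combined with the $v$-regularity, lets me place the coefficients in spaces for which $L^p$-parabolic and then Schauder estimates deliver uniform Hölder bounds on $a$ up to $t=T_{max}$; the bound on $\nabla w$ is in turn closed off by differentiating the $w$-ODE and using these estimates. Once all the quantities entering the fixed-point norms are bounded uniformly as $t\nearrow T_{max}$, the local existence result can be restarted from some $t_0<T_{max}$ close to $T_{max}$ to continue the solution beyond $T_{max}$, which is the desired contradiction; hence \dref{1.163072x} holds.
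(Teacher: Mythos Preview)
The paper does not supply its own proof of this lemma; it is quoted from \cite{PangPang1} and used as a black box, so there is no in-paper argument to compare against. Your sketch is a correct and self-contained account of the standard local-existence machinery for systems of this type: the substitution $a=ue^{-\xi w}$ to absorb the haptotactic cross-diffusion (this is exactly the transformation \dref{eqx45xx1ssdee2112}--\dref{ghyyyuuu1.1} the paper records), a contraction-mapping scheme on a short interval, maximum/comparison principles for nonnegativity and for the bound $0\le w\le\rho$, and the blow-up alternative established by contradiction using semigroup and Schauder estimates. This is essentially the route taken in \cite{PangPang1} (and in the closely related \cite{Tao79477}).

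One minor inaccuracy: the $w$-equation is \emph{not} linear in $w$ once $(\tilde a,\tilde v)$ are frozen, since the term $-\eta w^2$ (and, depending on how you organize the iteration, $-\eta\tilde a\, w\,e^{\xi w}$) survives. This is harmless for the argument---a scalar ODE with smooth right-hand side is locally well-posed and depends Lipschitz-continuously on the frozen data---but you should not call it linear. Apart from this wording slip, the proposal is sound.
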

%As a preliminary ingredient to the proof of  Lemma \ref{lemma70}, let us formulate a collection
%of estimates for solutions of the third equation in \dref{1.1}.

Firstly, by Lemma \ref{lemma70}, we can  pick  $s_0\in(0,T_{max})$, $s_0\leq1$ and
 %by the regularity principle asserted
  %we can conclude that for any given $s_0\in(0,T_{max}),s_0\leq1,$ there exists
% have $(u(\cdot,s),v(\cdot,s))\in C^2(\bar{\Omega})$, so that in particular, we can pick
$\beta>0$ such that
\begin{equation}\label{eqx45xx12112}
\|u(\tau)\|_{L^\infty(\Omega)}\leq \beta,~~~\|v(\tau)\|_{W^{1,\infty}(\Omega)}\leq \beta~~\mbox{and}~~\|w(\tau)\|_{W^{2,\infty}(\Omega)}\leq \beta~~\mbox{for all}~~\tau\in[0,s_0].
\end{equation}
In some parts of our subsequent analysis,  we introduce the variable transformation (see Tao et al. \cite{Tao2,Tao72,Tao79477}, Pang and Wang \cite{PangPang1})
\begin{equation}\label{eqx45xx1ssdee2112}
a=ue^{-\xi w},
\end{equation}
upon which \dref{1.1} takes the form
\begin{equation}
 \left\{\begin{array}{ll}
  a_t=e^{-\xi w}\nabla\cdot(e^{\xi w}\nabla a)-\chi e^{-\xi w}\nabla\cdot(e^{\xi w}a\nabla v)+\xi avw+ a(\mu-\xi\eta w)(1-e^{\xi w}a-w),~
x\in \Omega, t>0,\\
 \disp{v_t=\Delta v +ae^{\xi w}- v},\quad
x\in \Omega, t>0,\\
\disp{w_t=- vw+\eta w(1-ae^{\xi w}-w) },\quad
x\in \Omega, t>0,\\
 \disp{\frac{\partial a}{\partial \nu}=\frac{\partial v}{\partial \nu}=\frac{\partial w}{\partial \nu}=0},\quad
x\in \partial\Omega, t>0,\\
\disp{a(x,0):=a_0(x)=u_0(x)e^{-\xi w_0(x)}},v(x,0)=v_0(x),w(x,0)=w_0(x),\quad
x\in \Omega.\\
 \end{array}\right.\label{ghyyyuuu1.1}
\end{equation}

\section{Proof of the main result}

In this section, we are going to establish an iteration step to develop the main ingredient of our result.
%The iteration depends on a series of a-priori estimates.
Firstly, based on the ideas of Lemma 3.1  in \cite{PangPang1} (see also Lemma 2.1 of \cite{Winkler37103}), we
can derive the following  properties of solutions of  \dref{1.1}.
%
%
%
%Now,
%due to the presence of the generalized logistic source in the first equation of \dref{1.1}, some useful estimates
%can be derived from Lemma 2.2  in \cite{Tao41215}, which can be stated in the following.

\begin{lemma}\label{wsdelemma45}
Under the assumptions in theorem \ref{theorem3}, we derive that
there exists a positive constant %$m > 0$ and
$C$
such that the solution of \dref{1.1} satisfies
%
%Under the assumption of Lemma \ref{lemma70}.
%, the solution of \dref{1.1} satisfies
%Assume that %$f$ satisfies \dref{6291} and
%$(u, v)$ is the solution of \dref{1.1}.
%
%Then for any $T\in (s, T_{max})$, there exists $C > 0$ such that
\begin{equation}
\int_{\Omega}{u(x,t)}+\int_{\Omega} {v^2}(x,t)+\int_{\Omega}|\nabla {v}(x,t)|^2 \leq C~~\mbox{for all}~~ t\in(0, T_{max}).
\label{cz2.5ghju48cfg924ghyuji}
\end{equation}
Moreover,
%Then
for each $T\in(0, T_{max})$, one can find a constant $C > 0$ independent of $\varepsilon$ such that
\begin{equation}
\begin{array}{rl}
&\disp{\int_{0}^T\int_{\Omega}[|\nabla {v}|^2+u^2+ |\Delta {v}|^2]\leq C.}\\
\end{array}
\label{bnmbncz2.5ghhjuyuivvbssdddeennihjj}
\end{equation}
%with $l\in[1,\frac{N}{N-1}).$
%\begin{remark}
%The bounded of $\int_{\Omega}|\nabla {v}|^2$ plays an important rule in getting the boundness of $\int_{\Omega}|u|^k+\int_{\Omega}|\nabla v|^{2\beta}$
%for some large $k$ and $\beta$.
%\end{remark}
\end{lemma}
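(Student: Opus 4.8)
The plan is to establish \dref{cz2.5ghju48cfg924ghyuji} and \dref{bnmbncz2.5ghhjuyuivvbssdddeennihjj} by combining the elementary mass-type estimates available from the first and third equations of \dref{1.1} with an energy argument for $\|\nabla v\|_{L^2(\Omega)}$. First I would integrate the $u$-equation over $\Omega$; since the chemotaxis and haptotaxis terms are in divergence form and vanish under the zero-flux boundary conditions, we obtain $\frac{d}{dt}\int_\Omega u = \mu\int_\Omega u(1-u-w)$. Using $0\le w\le\rho$ from \dref{1.1sddd6ssdddd3072x} and dropping the favourable $-\mu\int_\Omega uw$ term, this yields $\frac{d}{dt}\int_\Omega u \le \mu\int_\Omega u - \mu\int_\Omega u^2 \le (\mu+\mu|\Omega|)\int_\Omega u /|\Omega|\cdots$; more carefully, $\mu\int u - \mu\int u^2 \le \mu\int u - \frac{\mu}{|\Omega|}(\int u)^2$, so $y(t):=\int_\Omega u$ satisfies $y'\le \mu y - \frac{\mu}{|\Omega|}y^2$, which gives a uniform bound $\int_\Omega u(\cdot,t)\le C_1$ for all $t\in(0,T_{max})$ by a standard ODE comparison (the initial mass is finite by \dref{x1.731426677gg}).

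Next I would treat $v$. Testing the second equation of \dref{1.1} (with $\tau=1$) by $v$ gives $\frac12\frac{d}{dt}\int_\Omega v^2 + \int_\Omega|\nabla v|^2 + \int_\Omega v^2 = \int_\Omega uv$. To close this I need $\int_\Omega uv \le \varepsilon\int_\Omega v^2 + \cdots$, but $\int_\Omega u$ alone does not control $\int_\Omega uv$; the standard route is instead to work with $\|\nabla v\|_{L^2}$ directly. Testing the $v$-equation by $-\Delta v$ yields $\frac12\frac{d}{dt}\int_\Omega|\nabla v|^2 + \int_\Omega|\Delta v|^2 + \int_\Omega|\nabla v|^2 = -\int_\Omega u\,\Delta v \le \frac12\int_\Omega|\Delta v|^2 + \frac12\int_\Omega u^2$. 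Thus $\frac{d}{dt}\int_\Omega|\nabla v|^2 + \int_\Omega|\Delta v|^2 + 2\int_\Omega|\nabla v|^2 \le \int_\Omega u^2$, and the task reduces to absorbing $\int_\Omega u^2$. In $N=2$ this is where the logistic term enters: testing the $u$-equation by $u$ produces, after handling the chemotaxis/haptotaxis terms by integration by parts, a damping term $-\mu\int_\Omega u^3$ (modulo lower-order pieces from $w$), so one gets a coupled functional $\int_\Omega u^2 + \int_\Omega|\nabla v|^2$ whose time derivative is controlled using the Gagliardo--Nirenberg inequality $\|u\|_{L^2}^2 \le C\|\nabla u\|_{L^2}\|u\|_{L^1} + C\|u\|_{L^1}^2$ together with $\varepsilon$-Young absorption of $\int_\Omega u\,|\nabla v|\,|\Delta v|$-type terms against $\int|\nabla u|^2$, $\int|\Delta v|^2$, and $\int u^3$. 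The cubic dissipation present precisely because $\mu>0$ is what makes the absorption possible in two dimensions without any largeness condition on $\mu$.

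The main obstacle is exactly the handling of the chemotaxis- and haptotaxis-driven cross terms $\chi\int_\Omega u\nabla u\cdot\nabla v$ and $\xi\int_\Omega u\nabla u\cdot\nabla w$ that arise when testing the $u$-equation by $u$: the first must be controlled by the diffusion $\int|\nabla u|^2$, the $L^3$ dissipation $\int u^3$, and $\int|\Delta v|^2$, while the second uses $\|\nabla w\|_{L^\infty}$ or $\|\Delta w\|_{L^p}$ bounds which are only available on the initial time interval $[0,s_0]$ via \dref{eqx45xx12112}, not uniformly — so in fact it is cleaner to carry out this whole computation in the $a$-variable of \dref{ghyyyuuu1.1}, where $w$ appears only through bounded coefficients $e^{\pm\xi w}$ (by \dref{1.1sddd6ssdddd3072x}) rather than through $\nabla w$. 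Once the coupled differential inequality $\frac{d}{dt}\big(\int_\Omega u^2 + \int_\Omega|\nabla v|^2\big) + c\big(\int_\Omega u^2 + \int_\Omega|\nabla v|^2 + \int_\Omega u^3 + \int_\Omega|\Delta v|^2\big) \le C$ is in hand, a Gr\"onwall argument delivers the uniform bounds in \dref{cz2.5ghju48cfg924ghyuji}, and integrating that same inequality in time over $(0,T)$ gives \dref{bnmbncz2.5ghhjuyuivvbssdddeennihjj}; the $\varepsilon$-independence is automatic since no regularization parameter actually appears in our setting (the statement retains $\varepsilon$ only for notational compatibility with an approximation scheme).
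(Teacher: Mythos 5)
Your mass estimate for $u$ and your closing remark about the spurious $\varepsilon$ are fine, but the core of your argument for the uniform bound on $\int_\Omega v^2+\int_\Omega|\nabla v|^2$ has a genuine gap, and it also overshoots what the lemma asserts. You propose to obtain a \emph{uniform-in-time} bound on $\int_\Omega u^2$ by testing the $u$-equation (or the $a$-equation of \dref{ghyyyuuu1.1}) by $u$ and absorbing the chemotaxis cross term into $\int_\Omega|\nabla u|^2$, $\int_\Omega|\Delta v|^2$ and the ``cubic dissipation'' $\mu\int_\Omega u^3$. Neither absorption is available at this stage: after integration by parts the cross term is $\chi\int_\Omega u^2\Delta v$ (equivalently $\chi^2\int_\Omega a^2|\nabla v|^2$), and Young's inequality sends it to $\int_\Omega|\Delta v|^3$, $\int_\Omega|\nabla v|^6$ or $\int_\Omega u^4$, none of which is controlled by the dissipation at hand (estimating $\int_\Omega u^4$ by Gagliardo--Nirenberg costs $\|\nabla u\|_{L^2}^2\|u\|_{L^2}^2$, which cannot be absorbed without smallness). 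Worse, with $\eta>0$ the exponent $p=2$ produces the remodeling term $\int_\Omega e^{2\xi w}a^{3}\,[\xi\eta w-2\mu]$ (cf.\ $J_5$ in \dref{cz2.511411ssedd4}); in the small-$\mu$ regime that this paper targets, $\xi\eta w-2\mu$ may be positive, so there is no net cubic dissipation at all. Controlling $\int_\Omega a^{p}|\nabla v|^2$ without largeness of $\mu$ is precisely the main difficulty of the paper, resolved only later (Lemmas \ref{lemma45630223116} and \ref{lemma45630223}) by first taking $p_0$ close to $1$ and invoking maximal Sobolev regularity; if your direct absorption worked, the rest of the paper would be superfluous.

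The lemma needs much less, and the intended argument (the Pang--Wang Lemma 3.1 type proof the paper refers to) never uses an $L^\infty$-in-time $L^2$ bound of $u$. From $\frac{d}{dt}\int_\Omega u+\mu\int_\Omega u^2\le\mu\int_\Omega u$ you get both the uniform $L^1$ bound and, integrating over $(t,t+1)$, the uniform estimate $\int_t^{t+1}\int_\Omega u^2\le C$; this already yields the $u^2$ part of \dref{bnmbncz2.5ghhjuyuivvbssdddeennihjj}. Then testing the $v$-equation by $v$ and by $-\Delta v$ gives $\frac{d}{dt}\bigl(\int_\Omega v^2+\int_\Omega|\nabla v|^2\bigr)+c\bigl(\int_\Omega v^2+\int_\Omega|\nabla v|^2\bigr)+\int_\Omega\bigl(|\nabla v|^2+|\Delta v|^2\bigr)\le C\int_\Omega u^2$, and the standard ODE lemma for $y'+cy\le f$ with $\sup_t\int_t^{t+1}f\le C$ delivers the uniform bound \dref{cz2.5ghju48cfg924ghyuji}; integrating the same inequality over $(0,T)$ gives the remaining terms of \dref{bnmbncz2.5ghhjuyuivvbssdddeennihjj}. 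So the fix is to discard the coupled $u^2$-energy functional entirely and instead exploit the space-time $L^2$ control of $u$ that the logistic dissipation in the mass identity already provides.
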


\begin{lemma}\label{lemma45630223116}
Let \begin{equation}
{A}_1=\frac{1}{\delta+1}(\frac{\delta+1}{\delta})^{-\delta}
[\frac{\delta(\delta-1)}{2}\chi^2]^{\delta+1}C_7C_{\delta+1}e^{\xi(\delta-1)}
\label{zjscz2.5297x9630222211444125}
\end{equation}
and $H(y)=y+
{A}_1y^{- \delta }$ for $y>0.$
For any fixed $\delta\geq1,C_7,\chi,C_{\delta+1}>0,$
%let $$H(y)=y+\frac{1}{r+\delta-1}\left[\frac{r+\delta-1}{r+\delta-2}\right]^{-(r+\delta-2)}y^{-(r+\delta-2)}\left(\frac{\chi M_\psi}{r+\delta-2} \right)^{r+\delta-1}C_{r+\delta-1}(y>0).$$
then $$\min_{y>0}H(y)=\frac{\delta(\delta-1)\chi^2}{2}(C_7C_{\delta+1})^{\frac{1}{\delta+1}}.$$
\end{lemma}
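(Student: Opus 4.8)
The plan is to treat this as an elementary single-variable calculus optimization: minimize the convex-looking function $H(y) = y + A_1 y^{-\delta}$ over $y > 0$ by locating its unique critical point, verifying that it is a minimum, and then substituting back to evaluate $H$ at that point and simplifying the resulting algebraic expression until it collapses to the claimed closed form. First I would differentiate: $H'(y) = 1 - \delta A_1 y^{-\delta - 1}$, which vanishes precisely when $y^{\delta+1} = \delta A_1$, i.e. at the unique critical point $y_* = (\delta A_1)^{1/(\delta+1)}$. Since $H''(y) = \delta(\delta+1) A_1 y^{-\delta - 2} > 0$ for all $y > 0$ (using $A_1 > 0$, which follows from $\delta \geq 1$, $C_7, \chi, C_{\delta+1} > 0$), $H$ is strictly convex on $(0,\infty)$, so $y_*$ is the global minimizer and $\min_{y>0} H(y) = H(y_*)$.

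Next I would compute $H(y_*)$. Writing $y_* = (\delta A_1)^{1/(\delta+1)}$, one has $y_*^{-\delta} = (\delta A_1)^{-\delta/(\delta+1)}$, so
\[
H(y_*) = (\delta A_1)^{\frac{1}{\delta+1}} + A_1 (\delta A_1)^{-\frac{\delta}{\delta+1}} = (\delta A_1)^{\frac{1}{\delta+1}} + \delta^{-\frac{\delta}{\delta+1}} A_1^{\frac{1}{\delta+1}} = A_1^{\frac{1}{\delta+1}} \left( \delta^{\frac{1}{\delta+1}} + \delta^{-\frac{\delta}{\delta+1}} \right).
\]
The bracket simplifies: $\delta^{\frac{1}{\delta+1}} + \delta^{-\frac{\delta}{\delta+1}} = \delta^{-\frac{\delta}{\delta+1}}(\delta + 1) = (\delta+1)\,\delta^{-\frac{\delta}{\delta+1}}$. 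Hence $H(y_*) = (\delta+1)\,\delta^{-\frac{\delta}{\delta+1}}\, A_1^{\frac{1}{\delta+1}}$.

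Finally I would substitute the definition \eqref{zjscz2.5297x9630222211444125} of $A_1$ and simplify. Raising $A_1$ to the power $\frac{1}{\delta+1}$ turns the factor $\frac{1}{\delta+1}$ into $(\delta+1)^{-\frac{1}{\delta+1}}$, turns $(\frac{\delta+1}{\delta})^{-\delta}$ into $(\frac{\delta+1}{\delta})^{-\frac{\delta}{\delta+1}}$, turns $[\frac{\delta(\delta-1)}{2}\chi^2]^{\delta+1}$ into $\frac{\delta(\delta-1)}{2}\chi^2$, and turns $C_7 C_{\delta+1} e^{\xi(\delta-1)}$ into $(C_7 C_{\delta+1})^{\frac{1}{\delta+1}} e^{\frac{\xi(\delta-1)}{\delta+1}}$. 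The pieces not already present in the target must cancel against the prefactor $(\delta+1)\,\delta^{-\frac{\delta}{\delta+1}}$: indeed $(\delta+1)\cdot(\delta+1)^{-\frac{1}{\delta+1}} = (\delta+1)^{\frac{\delta}{\delta+1}}$, and $\delta^{-\frac{\delta}{\delta+1}} \cdot (\frac{\delta+1}{\delta})^{-\frac{\delta}{\delta+1}} = \delta^{-\frac{\delta}{\delta+1}} \cdot (\delta+1)^{-\frac{\delta}{\delta+1}} \delta^{\frac{\delta}{\delta+1}} = (\delta+1)^{-\frac{\delta}{\delta+1}}$, so these combine to $(\delta+1)^{\frac{\delta}{\delta+1}} (\delta+1)^{-\frac{\delta}{\delta+1}} = 1$. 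What remains is $\frac{\delta(\delta-1)}{2}\chi^2 (C_7 C_{\delta+1})^{\frac{1}{\delta+1}}$ together with a leftover $e^{\frac{\xi(\delta-1)}{\delta+1}}$; for the stated identity to hold exactly as written, this exponential factor should also reduce to $1$, so I expect that either the intended reading absorbs it or the exponent in \eqref{zjscz2.5297x9630222211444125} is meant to make it cancel — the main (and only real) obstacle here is bookkeeping of the fractional exponents and confirming that every extraneous factor in $A_1$ is exactly what is needed to produce the clean constant $\frac{\delta(\delta-1)\chi^2}{2}(C_7 C_{\delta+1})^{\frac{1}{\delta+1}}$. No deep idea is required; the work is purely in careful exponent arithmetic.
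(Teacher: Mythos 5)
Your proposal follows exactly the paper's route: compute $H'(y)=1-\delta A_1y^{-\delta-1}$, locate the unique critical point $y_*=(\delta A_1)^{1/(\delta+1)}$, and evaluate $H(y_*)=(\delta+1)\,\delta^{-\delta/(\delta+1)}A_1^{1/(\delta+1)}$; your convexity check via $H''>0$ is if anything cleaner than the paper's justification, which only invokes $H(y)\to+\infty$ as $y\to0^+$ and $y\to+\infty$. Your exponent bookkeeping is correct, and the leftover factor you flag is real: with $A_1$ as defined in \dref{zjscz2.5297x9630222211444125}, the minimum actually equals $\frac{\delta(\delta-1)\chi^2}{2}(C_7C_{\delta+1})^{\frac{1}{\delta+1}}e^{\frac{\xi(\delta-1)}{\delta+1}}$, whereas the paper's proof simply asserts the final value without displaying the algebra and thereby silently drops $e^{\frac{\xi(\delta-1)}{\delta+1}}$. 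So the discrepancy you could not make disappear is a defect of the lemma as printed (either the factor $e^{\xi(\delta-1)}$ should be removed from $A_1$, i.e.\ absorbed into $C_7$, or the stated minimum should carry the factor $e^{\frac{\xi(\delta-1)}{\delta+1}}$), not a gap in your argument. It is also harmless where the lemma is used: in \dref{cz2ssderrr.511411sssdffssddffsddfffdffrtssdffedd4} one only needs the minimal value to tend to $0$ as $p_0\to1^+$, which it still does since the extra factor tends to $1$. One genuinely minor slip on your side: for $\delta=1$ the bracket $[\frac{\delta(\delta-1)}{2}\chi^2]^{\delta+1}$ vanishes, so $A_1=0$, $H(y)=y$, and the infimum $0$ is not attained; hence your claim that $A_1>0$ follows from $\delta\ge1$, and indeed the lemma's use of ``$\min$'', really require $\delta>1$ (the paper glosses over this degenerate case as well).
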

\begin{proof}
%Let
%$\rho_0=r+\delta-2$ and
%\begin{equation}
%{A}_1=\frac{1}{r+\delta-1}\left[\frac{r+\delta-1}{r+\delta-2}\right]^{-(r+\delta-2)}.
%\label{zjscz2.5297x9630222211444125}
%\end{equation}
%Then $H(y)$ will be
%$$H(y)=y+A_1y^{-\rho_0}\left(\frac{\chi M_\psi}{\rho_0} \right)^{\rho_0+1}C_{\rho_0+1}.$$
It is easy to verify that $$H'(y)=1- A_1\delta y^{-\delta-1}.$$
Let $H'(y)=0$, we have
$$y=\left(A_1\delta\right)^{\frac{1}{\delta+1}}.$$
On the other hand, by $\lim_{y\rightarrow0^+}H(y)=+\infty$ and $\lim_{y\rightarrow+\infty}H(y)=+\infty$, we have
$$\begin{array}{rl}
\min_{y>0}H(y)=H[\left(A_1\delta\right)^{\frac{1}{\delta+1}}]=&\disp{\frac{\delta(\delta-1)\chi^2}{2}(C_7C_{\delta+1})^{\frac{1}{\delta+1}},}\\
\end{array}
$$
whereby the proof is completed.
\end{proof}

\begin{lemma}\label{lemma45630223}
Let $\mu,\chi,\eta$ and $\xi$ be the positive constants. Assuming that
 $(a,v,w)$ is  a solution to \dref{ghyyyuuu1.1} on $(0,T_{max})$.
%If
%  \begin{equation}\label{gddffffnjjmmx1.731426677gg}
%\mu>\frac{(N-2)_{+}}{N}\chi C^{\frac{1}{\frac{N}{2}+1}}_{\frac{N}{2}+1}~~\mbox{and} ~~N\leq4,
%\begin{array}{ll}\\
% \end{array}
%\end{equation}
%or
%\begin{equation}\label{99gddffffnjjmmx1.731426677gg}
%\mu~~\mbox{is appropriately large},
%\begin{array}{ll}
% \end{array}
%\end{equation}
%Assume that    $\mu>\frac{(N-2)_+}{N}\chi C_{\frac{N}{2}+1}^{\frac{1}{\frac{N}{2}+1}}$.
% and $T>0.$
%$\tau=\min\{1,\frac{1}{2}T_{max}\}$.
Then %for any $T\in(s,T_{max})$,
 for all $p>1$,
there exists a positive constant $C:=C(p,|\Omega|,\mu,\chi,\xi,\eta,\beta)$ such that %for any $T\in(0,T_{max})$ and
% the solution of
%\dref{1.1} satisfies
%Then for all $k\in(1,([r]+1)N)$ one can
%find a positive constant $C$ such that
\begin{equation}
\int_{\Omega}a^p(x,t)dx\leq C ~~~\mbox{for all}~~ t\in(0,T_{max}).
\label{zjscz2.5297x96302222114}
\end{equation}
%Moreover, assume $T>0$, then there exists a positive constant such that
%\begin{equation}
%\int_t^{t+\tau}\int_{\Omega}u^{\delta+1}(x,t)dxds\leq C(T)~~~\mbox{for all}~~ t\in(0,T).
%\label{zjscz2.5297ssdeeex96302222114}
%\end{equation}
%holds.
\end{lemma}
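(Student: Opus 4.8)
The plan is to establish the $L^p$-bound on $a$ by an induction over a finite sequence of exponents, pumping up integrability of $a$ and $\nabla v$ simultaneously via the variation-of-constants formula applied to the $a$-equation in \dref{ghyyyuuu1.1}, together with the maximal Sobolev regularity estimate of Lemma \ref{lemma45xy1222232} applied to the $v$-equation. First I would fix $p>1$ and differentiate $\int_\Omega a^p$ in time. Using $a_t = e^{-\xi w}\nabla\cdot(e^{\xi w}\nabla a) - \chi e^{-\xi w}\nabla\cdot(e^{\xi w}a\nabla v) + \xi a v w + a(\mu - \xi\eta w)(1 - e^{\xi w}a - w)$, and testing with $p a^{p-1}e^{\xi w}$ (to exploit the self-adjoint structure of the first two terms against the weight $e^{\xi w}$), I would obtain
\begin{equation}
\frac{1}{p}\frac{d}{dt}\int_\Omega a^p e^{\xi w} + (p-1)\int_\Omega a^{p-2}e^{\xi w}|\nabla a|^2 \leq (p-1)\chi\int_\Omega a^{p-1}e^{\xi w}\nabla a\cdot\nabla v + (\text{lower-order terms}),
\label{plan1}
\end{equation}
where the lower-order terms coming from $\xi avw$, the logistic-type contribution, and the $w_t$-term are controlled using $0\le w\le\rho$ from \dref{1.1sddd6ssdddd3072x}, Lemma \ref{wsdelemma45}, and Young's inequality; the genuinely dangerous term is the chemotaxis integral on the right. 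Integrating \dref{plan1} by parts once more (moving $\nabla$ off $\nabla v$ onto $a^{p-1}\nabla a = \frac{1}{p}\nabla(a^p)$) converts it into $\frac{(p-1)\chi}{p}\int_\Omega a^p e^{\xi w}(\Delta v - \xi\nabla w\cdot\nabla v)$, and the key reduction is that the worst piece is bounded by $C\int_\Omega a^p|\Delta v| + (\text{controllable})$, which after Young's inequality needs an $L^{p+1}$-type control of $a$ against an $L^{p+1}$-type control of $\Delta v$.

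The core mechanism, following the strategy sketched in the introduction and refined from \cite{PangPang1} and \cite{Tao79477}, is to absorb the diffusion dissipation: since $(p-1)\int_\Omega a^{p-2}e^{\xi w}|\nabla a|^2 = \frac{4(p-1)}{p^2}\int_\Omega e^{\xi w}|\nabla a^{p/2}|^2$, I would use the Gagliardo--Nirenberg inequality in $\mathbb{R}^2$,
\begin{equation}
\|a^{p/2}\|_{L^4(\Omega)}^{2}\leq C_{GN}\big(\|\nabla a^{p/2}\|_{L^2(\Omega)}\|a^{p/2}\|_{L^{2}(\Omega)} + \|a^{p/2}\|_{L^{2}(\Omega)}^{2}\big),
\label{plan2}
\end{equation}
to interpolate $\int_\Omega a^p|\Delta v|\leq \|a^{p/2}\|_{L^4}^2\|\Delta v\|_{L^2}$, and then to feed $\|\Delta v\|_{L^2}$ from the spacetime bound \dref{bnmbncz2.5ghhjuyuivvbssdddeennihjj} of Lemma \ref{wsdelemma45}. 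This is exactly where a Grönwall argument combined with maximal regularity closes: I would set up a coupled differential inequality for $y_1(t) := \int_\Omega a^p e^{\xi w}$ and use Lemma \ref{lemma45xy1222232} with $\gamma$ slightly larger than $p$ to bound $\int_0^T e^{\gamma s}\|v\|_{W^{2,\gamma}}^\gamma$ in terms of $\int_0^T e^{\gamma s}\|ae^{\xi w}\|_{L^\gamma}^\gamma$, i.e.\ in terms of $\int_0^T e^{\gamma s}\int_\Omega a^\gamma$, closing the loop provided $\gamma\le p$ (so the induction must increase $p$ and the auxiliary exponent $\gamma$ in tandem, starting from the base case $q_0>1$ mentioned in the paper's outline, where $q_0$ is chosen small enough that Lemma \ref{wsdelemma45} already suffices). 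The technical constant $A_1$ and the minimization in Lemma \ref{lemma45630223116} enter precisely to show that, for the delicate step where one must compare $\frac{(p-1)p}{2}\chi^2$-weighted terms, the optimized constant is small enough to be absorbed — this is why I expect the main obstacle to be the careful bookkeeping of constants so that the coefficient in front of the non-absorbable remainder genuinely beats $1$, rather than any single estimate being hard in isolation.

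Concretely the induction runs as follows. Assume $\int_\Omega a^{q}\le C$ uniformly on $(0,T_{max})$ for some $q\ge q_0$; I want the same for some $q' = q'(q) > q$ with a gap bounded below (e.g.\ $q'$ roughly $2q$ or $q+1$), so that finitely many steps reach any prescribed $p$. Using the known $L^q$-bound, the $v$-equation $v_t = \Delta v + ae^{\xi w} - v$ and Lemma \ref{lemma45xy1222232} with $\gamma = q$ give $v\in L^q((s_0,T);W^{2,q}(\Omega))$ with norm controlled independently of $T$ after absorbing the exponential weights; by Sobolev embedding in $\mathbb{R}^2$ this yields control of $\nabla v$ in $L^{q}((s_0,T);L^{\infty})$ or at least in a high $L^r$ spacetime norm, in particular a spacetime bound on $\Delta v$ in $L^q$. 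Plugging this into the tested $a^{p'}$-inequality \dref{plan1}--\dref{plan2} with $p' $ chosen so that the GN-interpolation exponents balance against the available integrability of $\Delta v$, and using Young's inequality with the small parameter dictated by Lemma \ref{lemma45630223116} to absorb the chemotaxis term into $\int_\Omega |\nabla a^{p'/2}|^2$, one obtains $\frac{d}{dt}y_1 + y_1 \le h(t)$ with $h\in L^1_{loc}$ (in fact $h$ built from $\|\Delta v\|_{L^q}^{q}$ and lower-order bounded quantities), and a standard ODE comparison gives the uniform bound $\int_\Omega a^{p'}\le C$ on $(0,T_{max})$. Iterating finitely often establishes \dref{zjscz2.5297x96302222114} for the given $p>1$, with $C$ depending only on $p,|\Omega|,\mu,\chi,\xi,\eta,\beta$, which completes the proof; the subsequent Moser iteration to get $\|a\|_{L^\infty}$ and hence the conclusion of Theorem \ref{theorem3} is then routine.
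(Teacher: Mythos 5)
Your outline diverges from the paper at the decisive points and, as written, does not close. First, the reduction of the chemotaxis integral by a second integration by parts is already problematic: moving the derivative off $\nabla v$ produces $-\chi(p-1)\int_\Omega a^p e^{\xi w}\,(\Delta v+\xi\nabla w\cdot\nabla v)$, and the term containing $\nabla w$ is not "controllable" at this stage — there is no a priori bound on $\nabla w$ (its $L^5$ control is obtained only at the very end of the proof of Theorem \ref{theorem3}, after $\|\nabla v\|_{L^\infty}$ is known); the whole point of the substitution $a=ue^{-\xi w}$ in \dref{ghyyyuuu1.1} is to avoid re-introducing $\nabla w$. Even ignoring that term, estimating $\int_\Omega a^p|\Delta v|\le\|a^{p/2}\|_{L^4}^2\|\Delta v\|_{L^2}$ and invoking the spacetime bound \dref{bnmbncz2.5ghhjuyuivvbssdddeennihjj} only gives, via Gr\"onwall, a bound of size $\exp\bigl(C\int_0^T\|\Delta v\|_{L^2}^2\bigr)$, which grows with $T$; similarly your induction step's claim that $y_1'+y_1\le h$ with merely $h\in L^1_{loc}$ yields a uniform bound is false. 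The lemma asserts a constant independent of $t$ on all of $(0,T_{max})$ (and Theorem \ref{theorem3} needs boundedness on $(0,\infty)$), which is exactly why the paper works with the exponentially weighted maximal regularity of Lemma \ref{lemma45xy1222232} and a variation-of-constants (damped) form of the differential inequality rather than a local Gr\"onwall argument.

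Second, and more fundamentally, you misplace the role of Lemma \ref{lemma45630223116}. In the paper the maximal-regularity feedback term $\int\int e^{-(p-1)(t-s)}e^{2\xi w}a^{p+1}$ is absorbed not into the diffusion dissipation but into the logistic damping $p\mu\int\int e^{2\xi w}a^{p+1}$, and after optimizing the Young parameter $\lambda_0$ the competing constant is exactly $\frac{p(p-1)\chi^2}{2}(C_7C_{p+1})^{\frac{1}{p+1}}$, which can be beaten by $p\mu$ for an \emph{arbitrary} $\mu>0$ only because of the factor $(p-1)$: one must choose the first exponent $p_0>1$ close to $1$ (together with the smallness of $(p_0-1)\xi\eta\rho$). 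Your plan treats this absorption as mere bookkeeping available at every induction level $p$; for fixed $p$ away from $1$ and small $\mu$ it fails, which is precisely the obstruction that forced the largeness assumption on $\mu$ in \cite{PangPang1}. The paper's actual structure is therefore two-staged: (i) the delicate estimate at $p_0\approx1$ giving \dref{cz2.5kk1214114114rrggkklljjuu}; (ii) Neumann heat-semigroup $L^p$--$L^q$ estimates (Lemma \ref{lemmaggbb41ffgg}) upgrading this to $\|\nabla v\|_{L^q}\le C$ for $q<\frac{2p_0}{(2-p_0)^+}$ and $\|v\|_{L^\infty}\le C$, after which, for arbitrary $p$, the terms $\int a^p|\nabla v|^2$ and $\int a^{p+1}$ are absorbed into $\int e^{\xi w}a^{p-2}|\nabla a|^2$ by H\"older and Gagliardo--Nirenberg using the $L^{p_0}$ bound of $a$, giving a genuinely uniform ODE inequality. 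Without the "exponent close to one" mechanism and the time-uniform absorption, your proposal does not prove the stated lemma for all $\mu>0$.
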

\begin{proof}
Firstly, assuming that  $p\leq2$.
A straightforward differentiation, using \dref{ghyyyuuu1.1} and two integrations by parts yields
\begin{equation}
\begin{array}{rl}
&\disp\frac{d}{dt}\disp\int_{\Omega}e^{\xi w}a^p+(p+1)\int_{\Omega}e^{\xi w}a^p\\
=&\disp{\xi\int_{\Omega}e^{\xi w}a^p\cdot\{-vw+\eta w(1-ae^{\xi w}-w)\}}\\
&+\disp{p\int_\Omega e^{\xi w}a^{p-1}\cdot\{e^{-\xi w}\nabla\cdot(e^{\xi w}\nabla a)-\chi e^{-\xi w}\nabla\cdot(e^{\xi w}a\nabla v)\}}\\
&+\disp{a\xi vw +a(\mu-\xi\eta w)(1-ae^{\xi w}-w)\}+(p+1)\int_{\Omega}e^{\xi w}a^p}\\
=&\disp{-p(p-1)\int_{\Omega}
e^{\xi w}a^{p-2}|\nabla a|^2+p(p-1)\chi\int_{\Omega}
e^{\xi w}a^{p-1}\nabla a\cdot \nabla v}\\
&+\disp{(p-1)\xi\int_\Omega e^{\xi w}a^{p}vw+\int_\Omega e^{\xi w}a^{p}\{(p+1)+(p-1)\xi\eta w(w-1)+p\mu (1-w)\}}\\
&+\disp{\int_\Omega e^{2\xi w}a^{p+1}[(p-1)\xi\eta w-p\mu]}\\
:=&\disp{J_1+J_2+J_3+J_4+J_5~~\mbox{for all}~~ t\in(0,T_{max}).}\\
%\leq&\disp{\int_\Omega (|f(x,t)|+L)|u|^{q+1} dx}\\
%\leq&\disp{\int_\Omega (k_1|u|^\alpha+k_2)|u|^{q+1} dx}\\
%=&\disp{\int\int\triangle J(x-y)u(y)(u|u|^q(x))dydx.}
%\leq&\disp{(|g|_{L^\infty(0,\omega; L^\infty(\Omega))}+1)(|\Omega|+1)^{\frac{1}{2}}|u|^{q+1}_{q+2}.}\\
%\leq&\disp{\frac{q+1}{q+2}((|f|_{L^\infty(0,\omega; L^\infty(\Omega))}+1)(|\Omega|+1)^{\frac{1}{2}})^{\frac{q+2}{q+1}}|u|^{q+2}_{q+2}+\frac{1}{q+2}}\\
%\leq&\disp{\frac{q+1}{q+2}(|f|_{L^\infty(0,\omega; L^\infty(\Omega))}+1)^2(|\Omega|+1)|u|^{q+2}_{q+2}+\frac{1}{q+2}.}\\
\end{array}
\label{cz2.511411ssedd4}
\end{equation}
Now, in light of \dref{1.1sddd6ssdddd3072x} and the Young inequality, we derive that
\begin{equation}
\begin{array}{rl}
\disp J_3\leq&\disp{\varepsilon_1\int_\Omega e^{2\xi w}a^{p+1}+\frac{1}{p+1}(\varepsilon_1\times\frac{p+1}{p})^{-p}
[(p-1)\xi]^{p+1}\int_\Omega e^{\xi w(1-p)}v^{p+1}}\\
\leq&\disp{\varepsilon_1\int_\Omega e^{2\xi w}a^{p+1}+\frac{1}{p+1}(\varepsilon_1\times\frac{p+1}{p})^{-p}
[(p-1)\xi]^{p+1}\int_\Omega v^{p+1}~~\mbox{for all}~~ t\in(0,T_{max}),}\\
%\leq&\disp{\int_\Omega (|f(x,t)|+L)|u|^{q+1} dx}\\
%\leq&\disp{\int_\Omega (k_1|u|^\alpha+k_2)|u|^{q+1} dx}\\
%=&\disp{\int\int\triangle J(x-y)u(y)(u|u|^q(x))dydx.}
%\leq&\disp{(|g|_{L^\infty(0,\omega; L^\infty(\Omega))}+1)(|\Omega|+1)^{\frac{1}{2}}|u|^{q+1}_{q+2}.}\\
%\leq&\disp{\frac{q+1}{q+2}((|f|_{L^\infty(0,\omega; L^\infty(\Omega))}+1)(|\Omega|+1)^{\frac{1}{2}})^{\frac{q+2}{q+1}}|u|^{q+2}_{q+2}+\frac{1}{q+2}}\\
%\leq&\disp{\frac{q+1}{q+2}(|f|_{L^\infty(0,\omega; L^\infty(\Omega))}+1)^2(|\Omega|+1)|u|^{q+2}_{q+2}+\frac{1}{q+2}.}\\
\end{array}
\label{cz2.511411ssedssderrd4}
\end{equation}
\begin{equation}
\begin{array}{rl}
\disp J_4\leq&\disp{[(p+1)+(p-1)\xi\eta\rho^2+p\mu]\int_\Omega e^{\xi w}a^{p}}\\
\leq&\disp{(p+1)[1+\xi\eta\rho^2+\mu]\int_\Omega e^{\xi w}a^{p}}\\
\leq&\disp{\varepsilon_2\int_\Omega e^{2\xi w}a^{p+1}+\frac{1}{p+1}(\varepsilon_2\times\frac{p+1}{p})^{-p}
(p+1)^{p+1}[1+\xi\eta\rho^2+\mu]^{p+1}|\Omega|~~\mbox{for all}~~ t\in(0,T_{max}),}\\
%&+\disp{\int_\Omega e^{2\xi w}a^{p+1}[(p-1)\xi\eta w-p\mu]~~\mbox{for all}~~ t\in(0,T_{max}).}\\
%\leq&\disp{\int_\Omega (|f(x,t)|+L)|u|^{q+1} dx}\\
%\leq&\disp{\int_\Omega (k_1|u|^\alpha+k_2)|u|^{q+1} dx}\\
%=&\disp{\int\int\triangle J(x-y)u(y)(u|u|^q(x))dydx.}
%\leq&\disp{(|g|_{L^\infty(0,\omega; L^\infty(\Omega))}+1)(|\Omega|+1)^{\frac{1}{2}}|u|^{q+1}_{q+2}.}\\
%\leq&\disp{\frac{q+1}{q+2}((|f|_{L^\infty(0,\omega; L^\infty(\Omega))}+1)(|\Omega|+1)^{\frac{1}{2}})^{\frac{q+2}{q+1}}|u|^{q+2}_{q+2}+\frac{1}{q+2}}\\
%\leq&\disp{\frac{q+1}{q+2}(|f|_{L^\infty(0,\omega; L^\infty(\Omega))}+1)^2(|\Omega|+1)|u|^{q+2}_{q+2}+\frac{1}{q+2}.}\\
\end{array}
\label{cz2.511411ssedssderrd4}
\end{equation}
\begin{equation}
\begin{array}{rl}
\disp J_5\leq&\disp{\int_\Omega e^{2\xi w}a^{p+1}[(p-1)\xi\eta \rho-p\mu]~~\mbox{for all}~~ t\in(0,T_{max})}\\
%\leq&\disp{\int_\Omega (|f(x,t)|+L)|u|^{q+1} dx}\\
%\leq&\disp{\int_\Omega (k_1|u|^\alpha+k_2)|u|^{q+1} dx}\\
%=&\disp{\int\int\triangle J(x-y)u(y)(u|u|^q(x))dydx.}
%\leq&\disp{(|g|_{L^\infty(0,\omega; L^\infty(\Omega))}+1)(|\Omega|+1)^{\frac{1}{2}}|u|^{q+1}_{q+2}.}\\
%\leq&\disp{\frac{q+1}{q+2}((|f|_{L^\infty(0,\omega; L^\infty(\Omega))}+1)(|\Omega|+1)^{\frac{1}{2}})^{\frac{q+2}{q+1}}|u|^{q+2}_{q+2}+\frac{1}{q+2}}\\
%\leq&\disp{\frac{q+1}{q+2}(|f|_{L^\infty(0,\omega; L^\infty(\Omega))}+1)^2(|\Omega|+1)|u|^{q+2}_{q+2}+\frac{1}{q+2}.}\\
\end{array}
\label{cz2.511411sdfffssedd4}
\end{equation}
and
\begin{equation}
\begin{array}{rl}
\disp J_2\leq&\disp{\frac{p(p-1)}{2}\int_{\Omega}
e^{\xi w}a^{p-2}|\nabla a|^2+\frac{p(p-1)}{2}\chi^2\int_{\Omega}
e^{\xi w}a^{p}|\nabla v|^2}\\
\leq&\disp{\frac{p(p-1)}{2}\int_{\Omega}
e^{\xi w}a^{p-2}|\nabla a|^2+\lambda_0\int_\Omega e^{2\xi w}a^{p+1}}\\
&\disp{+\frac{1}{p+1}(\lambda_0\times\frac{p+1}{p})^{-p}
[\frac{p(p-1)}{2}\chi^2]^{p+1}\int_\Omega e^{(1-p)\xi w}|\nabla v|^{2(p+1)} }\\
\leq&\disp{\frac{p(p-1)}{2}\int_{\Omega}
e^{\xi w}a^{p-2}|\nabla a|^2+\lambda_0\int_\Omega e^{2\xi w}a^{p+1}}\\
&\disp{+\frac{1}{p+1}(\lambda_0\times\frac{p+1}{p})^{-p}
[\frac{p(p-1)}{2}\chi^2]^{p+1}\int_\Omega |\nabla v|^{2(p+1)} ~~\mbox{for all}~~ t\in(0,T_{max})}\\
%&+\disp{\int_\Omega e^{2\xi w}a^{p+1}[(p-1)\xi\eta w-p\mu]~~\mbox{for all}~~ t\in(0,T_{max}).}\\
%\leq&\disp{\int_\Omega (|f(x,t)|+L)|u|^{q+1} dx}\\
%\leq&\disp{\int_\Omega (k_1|u|^\alpha+k_2)|u|^{q+1} dx}\\
%=&\disp{\int\int\triangle J(x-y)u(y)(u|u|^q(x))dydx.}
%\leq&\disp{(|g|_{L^\infty(0,\omega; L^\infty(\Omega))}+1)(|\Omega|+1)^{\frac{1}{2}}|u|^{q+1}_{q+2}.}\\
%\leq&\disp{\frac{q+1}{q+2}((|f|_{L^\infty(0,\omega; L^\infty(\Omega))}+1)(|\Omega|+1)^{\frac{1}{2}})^{\frac{q+2}{q+1}}|u|^{q+2}_{q+2}+\frac{1}{q+2}}\\
%\leq&\disp{\frac{q+1}{q+2}(|f|_{L^\infty(0,\omega; L^\infty(\Omega))}+1)^2(|\Omega|+1)|u|^{q+2}_{q+2}+\frac{1}{q+2}.}\\
\end{array}
\label{cz2.511411ssddfffffedssderrd4}
\end{equation}
and any small positive constants $\varepsilon_1,\varepsilon_2$ and $\lambda_0.$

Inserting \dref{cz2.511411ssedssderrd4}--\dref{cz2.511411ssddfffffedssderrd4} into \dref{cz2.511411ssedd4}, we derive that
\begin{equation}
\begin{array}{rl}
\disp &\disp\frac{d}{dt}\disp\int_{\Omega}e^{\xi w}a^p+(p+1)\int_{\Omega}e^{\xi w}a^p+\int_\Omega e^{2\xi w}a^{p+1}[p\mu-\varepsilon_1-\varepsilon_2-\lambda_0-(p-1)\xi\eta \rho]\\
\leq&\disp{\frac{1}{p+1}(\lambda_0\times\frac{p+1}{p})^{-p}
[\frac{p(p-1)}{2}\chi^2]^{p+1}\int_\Omega |\nabla v|^{2(p+1)}}\\
&\disp{+C_1(\varepsilon_1,\varepsilon_2)~~\mbox{for all}~~ t\in(0,T_{max}),}\\
%\leq&\disp{\int_\Omega (|f(x,t)|+L)|u|^{q+1} dx}\\
%\leq&\disp{\int_\Omega (k_1|u|^\alpha+k_2)|u|^{q+1} dx}\\
%=&\disp{\int\int\triangle J(x-y)u(y)(u|u|^q(x))dydx.}
%\leq&\disp{(|g|_{L^\infty(0,\omega; L^\infty(\Omega))}+1)(|\Omega|+1)^{\frac{1}{2}}|u|^{q+1}_{q+2}.}\\
%\leq&\disp{\frac{q+1}{q+2}((|f|_{L^\infty(0,\omega; L^\infty(\Omega))}+1)(|\Omega|+1)^{\frac{1}{2}})^{\frac{q+2}{q+1}}|u|^{q+2}_{q+2}+\frac{1}{q+2}}\\
%\leq&\disp{\frac{q+1}{q+2}(|f|_{L^\infty(0,\omega; L^\infty(\Omega))}+1)^2(|\Omega|+1)|u|^{q+2}_{q+2}+\frac{1}{q+2}.}\\
\end{array}
\label{cz2.511411ssssdffedd4}
\end{equation}
where
\begin{equation}
\begin{array}{rl}
C_1(\varepsilon_1,\varepsilon_2):=&\disp{\frac{1}{p+1}(\varepsilon_2\times\frac{p+1}{p})^{-p}(p+1)^{p+1}
[1+\xi\eta\rho^2+\mu]^{p+1}|\Omega|}\\
&\disp{+\frac{1}{p+1}(\varepsilon_1\times\frac{p+1}{p})^{-p}
[(p-1)\xi]^{p+1}\int_\Omega v^{p+1}.}\\
\end{array}
\label{cz2.511411ssdeerrssssdffedd4}
\end{equation}
Next,
from Lemma \ref{wsdelemma45}, $N=2$ and the Gagliardo--Nirenberg inequality, it follows that
\begin{equation}
\|v(\cdot,t)\|_{L^p(\Omega)}\leq C_2~~\mbox{for all}~~ p\geq1~~\mbox{and}~~t\in(0,T_{max}).
\label{cz2.511411shhyuuuusdeerrssssdffedd4}
\end{equation}
This along with \dref{cz2.511411ssdeerrssssdffedd4} entails
%Hence, \dref{cz2.511411ssdeerrssssdffedd4} implies
\begin{equation}
\begin{array}{rl}
C_1(\varepsilon_1,\varepsilon_2)\leq&C_3(\varepsilon_1,\varepsilon_2)\\
:=&\disp{\frac{1}{p+1}(\varepsilon_2\times\frac{p+1}{p})^{-p}
(p+1)^{p+1}
[1+\xi\eta\rho^2+\mu]^{p+1}|\Omega|}\\
&\disp{+C_2\frac{1}{p+1}(\varepsilon_1\times\frac{p+1}{p})^{-p}
[(p-1)\xi]^{p+1}.}\\
\end{array}
\label{cz2.511411ssdeessddrrssssdffedd4}
\end{equation}
From this and \dref{cz2.511411ssdeerrssssdffedd4} we also obtain
%Along with \dref{cz2.511411ssdeerrssssdffedd4}--\dref{cz2.511411ssdeessddrrssssdffedd4}, this first implies that
%Now, collecting \dref{cz2.511411ssssdffedd4}--\dref{cz2.511411ssdeessddrrssssdffedd4}, we conclude that
\begin{equation}
\begin{array}{rl}
\disp &\disp\frac{d}{dt}\disp\int_{\Omega}e^{\xi w}a^p+(p+1)\int_{\Omega}e^{\xi w}a^p+\int_\Omega e^{2\xi w}a^{p+1}[p\mu-\varepsilon_1-\varepsilon_2-\lambda_0-(p-1)\xi\eta \rho]\\
\leq&\disp{\frac{1}{p+1}(\lambda_0\times\frac{p+1}{p})^{-p}
[\frac{p(p-1)}{2}\chi^2]^{p+1}\int_\Omega |\nabla v|^{2(p+1)}+C_3(\varepsilon_1,\varepsilon_2)~~\mbox{for all}~~ t\in(0,T_{max}).}\\
%\leq&\disp{\int_\Omega (|f(x,t)|+L)|u|^{q+1} dx}\\
%\leq&\disp{\int_\Omega (k_1|u|^\alpha+k_2)|u|^{q+1} dx}\\
%=&\disp{\int\int\triangle J(x-y)u(y)(u|u|^q(x))dydx.}
%\leq&\disp{(|g|_{L^\infty(0,\omega; L^\infty(\Omega))}+1)(|\Omega|+1)^{\frac{1}{2}}|u|^{q+1}_{q+2}.}\\
%\leq&\disp{\frac{q+1}{q+2}((|f|_{L^\infty(0,\omega; L^\infty(\Omega))}+1)(|\Omega|+1)^{\frac{1}{2}})^{\frac{q+2}{q+1}}|u|^{q+2}_{q+2}+\frac{1}{q+2}}\\
%\leq&\disp{\frac{q+1}{q+2}(|f|_{L^\infty(0,\omega; L^\infty(\Omega))}+1)^2(|\Omega|+1)|u|^{q+2}_{q+2}+\frac{1}{q+2}.}\\
\end{array}
\label{cz2.511411sssdffsssdffedd4}
\end{equation}
Then for any $t\in (s_0,T_{max})$, by means of the variation-of constants representation for the above inequality, we can estimate
%
%employing the variation-of-constants formula to , we obtain
\begin{equation}
\begin{array}{rl}
\disp &\disp\disp\int_{\Omega}e^{\xi w}a^p(\cdot,t)+[p\mu-\varepsilon_1-\varepsilon_2-\lambda_0-(p-1)\xi\eta \rho]\int_{s_0}^t\int_\Omega e^{-(p-1)(t-s)}e^{2\xi w}a^{p+1}\\
\leq&\disp{\int_{\Omega}u^p(s_0,t)+\frac{1}{p+1}(\lambda_0\times\frac{p+1}{p})^{-p}
[\frac{p(p-1)}{2}\chi^2]^{p+1}\int_{s_0}^t\int_\Omega e^{-(p-1)(t-s)}|\nabla v|^{2(p+1)}}\\
&\disp{+C_3(\varepsilon_1,\varepsilon_2)~~\mbox{for all}~~ t\in(0,T_{max}).}\\
%\leq&\disp{\int_\Omega (|f(x,t)|+L)|u|^{q+1} dx}\\
%\leq&\disp{\int_\Omega (k_1|u|^\alpha+k_2)|u|^{q+1} dx}\\
%=&\disp{\int\int\triangle J(x-y)u(y)(u|u|^q(x))dydx.}
%\leq&\disp{(|g|_{L^\infty(0,\omega; L^\infty(\Omega))}+1)(|\Omega|+1)^{\frac{1}{2}}|u|^{q+1}_{q+2}.}\\
%\leq&\disp{\frac{q+1}{q+2}((|f|_{L^\infty(0,\omega; L^\infty(\Omega))}+1)(|\Omega|+1)^{\frac{1}{2}})^{\frac{q+2}{q+1}}|u|^{q+2}_{q+2}+\frac{1}{q+2}}\\
%\leq&\disp{\frac{q+1}{q+2}(|f|_{L^\infty(0,\omega; L^\infty(\Omega))}+1)^2(|\Omega|+1)|u|^{q+2}_{q+2}+\frac{1}{q+2}.}\\
\end{array}
\label{cz2.511411sssdffsddffrtssdffedd4}
\end{equation}
%
%
%In order to estimate the second term on the right hand side of \dref{cz2.511411ssddfffffedssderrd4}, we use the Young inequality to get
%
Next, according to the Gagliardo--Nirenberg inequality, \dref{cz2.511411shhyuuuusdeerrssssdffedd4} and  Lemma \ref{wsdelemma45}, we can choose $C_4$ and $C_5$ such that
\begin{equation}
\begin{array}{rl}
\disp \|\nabla v(\cdot,s)\|^{2(p+1)}_{L^{2(p+1)}(\Omega)}\leq&\disp{ C_4\| v(\cdot,s)\|^{p+1}_{W^{2,p+1}(\Omega)} \|\nabla v(\cdot,s)\|^{p+1}_{L^{2}(\Omega)}}\\
\leq&\disp{C_5\| v(\cdot,s)\|^{p+1}_{W^{2,p+1}(\Omega)}  ~~\mbox{for all}~~ t\in(0,T_{max}).}\\
%&+\disp{\int_\Omega e^{2\xi w}a^{p+1}[(p-1)\xi\eta w-p\mu]~~\mbox{for all}~~ t\in(0,T_{max}).}\\
%\leq&\disp{\int_\Omega (|f(x,t)|+L)|u|^{q+1} dx}\\
%\leq&\disp{\int_\Omega (k_1|u|^\alpha+k_2)|u|^{q+1} dx}\\
%=&\disp{\int\int\triangle J(x-y)u(y)(u|u|^q(x))dydx.}
%\leq&\disp{(|g|_{L^\infty(0,\omega; L^\infty(\Omega))}+1)(|\Omega|+1)^{\frac{1}{2}}|u|^{q+1}_{q+2}.}\\
%\leq&\disp{\frac{q+1}{q+2}((|f|_{L^\infty(0,\omega; L^\infty(\Omega))}+1)(|\Omega|+1)^{\frac{1}{2}})^{\frac{q+2}{q+1}}|u|^{q+2}_{q+2}+\frac{1}{q+2}}\\
%\leq&\disp{\frac{q+1}{q+2}(|f|_{L^\infty(0,\omega; L^\infty(\Omega))}+1)^2(|\Omega|+1)|u|^{q+2}_{q+2}+\frac{1}{q+2}.}\\
\end{array}
\label{cz2.511411ssddssddfffffffedssderrd4}
\end{equation}
Therefore, due to $p\leq2,$ with the help of \dref{cz2.511411ssddssddfffffffedssderrd4}, applying \dref{cz2.5bbv114} of Lemma \ref{lemma45xy1222232} with $\gamma=p+1$, we obtain
\begin{equation}
\begin{array}{rl}
\disp &\disp\disp\frac{1}{p+1}(\lambda_0\times\frac{p+1}{p})^{-p}
[\frac{p(p-1)}{2}\chi^2]^{p+1}\int_{s_0}^t\int_\Omega e^{-(p-1)(t-s)}|\nabla v|^{2(p+1)}\\
\leq&\disp{\frac{1}{p+1}(\lambda_0\times\frac{p+1}{p})^{-p}
[\frac{p(p-1)}{2}\chi^2]^{p+1}C_5\int_{s_0}^t e^{-(p-1)(t-s)}\| v(\cdot,s)\|^{p+1}_{W^{2,p+1}(\Omega)}ds}\\
\leq&\disp{\frac{1}{p+1}(\lambda_0\times\frac{p+1}{p})^{-p}
[\frac{p(p-1)}{2}\chi^2]^{p+1}C_5C_{p+1}\int_{s_0}^t \int_\Omega e^{-(p-1)(t-s)} u^{{{p}+1}}(x,s)dxds+
C_6 }\\
\leq&\disp{\frac{1}{p+1}(\lambda_0\times\frac{p+1}{p})^{-p}
[\frac{p(p-1)}{2}\chi^2]^{p+1}C_5C_{p+1}e^{\xi (p-1)}\int_{s_0}^t \int_\Omega e^{-(p-1)(t-s)} e^{2\xi w}a^{p+1}(x,s)dxds+
C_6 }\\
\leq&\disp{\frac{1}{p+1}(\lambda_0\times\frac{p+1}{p})^{-p}
[\frac{p(p-1)}{2}\chi^2]^{p+1}C_7C_{p+1}\int_{s_0}^t \int_\Omega e^{-(p-1)(t-s)} e^{2\xi w}a^{p+1}(x,s)dxds+
C_6 }\\
%\leq&\disp{\int_\Omega (|f(x,t)|+L)|u|^{q+1} dx}\\
%\leq&\disp{\int_\Omega (k_1|u|^\alpha+k_2)|u|^{q+1} dx}\\
%=&\disp{\int\int\triangle J(x-y)u(y)(u|u|^q(x))dydx.}
%\leq&\disp{(|g|_{L^\infty(0,\omega; L^\infty(\Omega))}+1)(|\Omega|+1)^{\frac{1}{2}}|u|^{q+1}_{q+2}.}\\
%\leq&\disp{\frac{q+1}{q+2}((|f|_{L^\infty(0,\omega; L^\infty(\Omega))}+1)(|\Omega|+1)^{\frac{1}{2}})^{\frac{q+2}{q+1}}|u|^{q+2}_{q+2}+\frac{1}{q+2}}\\
%\leq&\disp{\frac{q+1}{q+2}(|f|_{L^\infty(0,\omega; L^\infty(\Omega))}+1)^2(|\Omega|+1)|u|^{q+2}_{q+2}+\frac{1}{q+2}.}\\
\end{array}
\label{cz2.511411ssderrrsssdffsddffrtssdffedd4}
\end{equation}
for all $t\in(s_0,T_{max})$,
where
\begin{equation}
C_6:=\frac{1}{p+1}(\lambda_0\times\frac{p+1}{p})^{-p}
[\frac{p(p-1)}{2}\chi^2]^{p+1}C_5C_{p+1}e^{\gamma s_0}\|v_0(\cdot,s_0)\|^{\gamma}_{W^{2,\gamma}(\Omega)}~~~\mbox{and}~~C_7:=C_5e^{\xi}.
\label{cz2.511411ssseddsderrrsssdffsddffrtssdffedd4}
\end{equation}
Substituting \dref{cz2.511411ssderrrsssdffsddffrtssdffedd4} into \dref{cz2.511411sssdffsddffrtssdffedd4}, we derive
\begin{equation}
\begin{array}{rl}
\disp &\disp\disp\int_{\Omega}e^{\xi w}a^p(\cdot,t)+[p\mu-\varepsilon_1-\varepsilon_2-\lambda_0-(p-1)\xi\eta \rho]\int_{s_0}^t\int_\Omega e^{-(p-1)(t-s)}e^{2\xi w}a^{p+1}\\
\leq&\disp{\frac{1}{p+1}(\lambda_0\times\frac{p+1}{p})^{-p}
[\frac{p(p-1)}{2}\chi^2]^{p+1}C_7C_{p+1}\int_{s_0}^t \int_\Omega e^{-(p-1)(t-s)} e^{2\xi w}a^{p+1}(x,s)dxds+C_8(\varepsilon_1,\varepsilon_2)}\\
%\leq&\disp{\int_\Omega (|f(x,t)|+L)|u|^{q+1} dx}\\
%\leq&\disp{\int_\Omega (k_1|u|^\alpha+k_2)|u|^{q+1} dx}\\
%=&\disp{\int\int\triangle J(x-y)u(y)(u|u|^q(x))dydx.}
%\leq&\disp{(|g|_{L^\infty(0,\omega; L^\infty(\Omega))}+1)(|\Omega|+1)^{\frac{1}{2}}|u|^{q+1}_{q+2}.}\\
%\leq&\disp{\frac{q+1}{q+2}((|f|_{L^\infty(0,\omega; L^\infty(\Omega))}+1)(|\Omega|+1)^{\frac{1}{2}})^{\frac{q+2}{q+1}}|u|^{q+2}_{q+2}+\frac{1}{q+2}}\\
%\leq&\disp{\frac{q+1}{q+2}(|f|_{L^\infty(0,\omega; L^\infty(\Omega))}+1)^2(|\Omega|+1)|u|^{q+2}_{q+2}+\frac{1}{q+2}.}\\
\end{array}
\label{cz2.511411sssdffsddfffdffrtssdffedd4}
\end{equation}
for all $t\in(0,T_{max}),$
where
$$C_8(\varepsilon_1,\varepsilon_2):=C_3(\varepsilon_1,\varepsilon_2)+C_6.
$$
Choosing $\lambda_0=\left(A_1p\right)^{\frac{1}{p+1}}$ in \dref{cz2.511411sssdffsddfffdffrtssdffedd4} and using Lemma \ref{lemma45630223116}, we derive
\begin{equation}
\begin{array}{rl}
\disp &\disp\disp\int_{\Omega}e^{\xi w}a^p(\cdot,t)\\
&\disp+[p\mu-\varepsilon_1-\varepsilon_2-\frac{p(p-1)\chi^2}{2}(C_7C_{p+1})^{\frac{1}{p+1}}-(p-1)\xi\eta \rho]\int_{s_0}^t\int_\Omega e^{-(p-1)(t-s)}e^{2\xi w}a^{p+1}\\
\leq&\disp{C_8(\varepsilon_1,\varepsilon_2).}\\
%\leq&\disp{\int_\Omega (|f(x,t)|+L)|u|^{q+1} dx}\\
%\leq&\disp{\int_\Omega (k_1|u|^\alpha+k_2)|u|^{q+1} dx}\\
%=&\disp{\int\int\triangle J(x-y)u(y)(u|u|^q(x))dydx.}
%\leq&\disp{(|g|_{L^\infty(0,\omega; L^\infty(\Omega))}+1)(|\Omega|+1)^{\frac{1}{2}}|u|^{q+1}_{q+2}.}\\
%\leq&\disp{\frac{q+1}{q+2}((|f|_{L^\infty(0,\omega; L^\infty(\Omega))}+1)(|\Omega|+1)^{\frac{1}{2}})^{\frac{q+2}{q+1}}|u|^{q+2}_{q+2}+\frac{1}{q+2}}\\
%\leq&\disp{\frac{q+1}{q+2}(|f|_{L^\infty(0,\omega; L^\infty(\Omega))}+1)^2(|\Omega|+1)|u|^{q+2}_{q+2}+\frac{1}{q+2}.}\\
\end{array}
\label{cz2.511411sssdffssddffsddfffdffrtssdffedd4}
\end{equation}
Now, for any positive constants  $\mu,\chi,\xi$ and $\eta$, we may pick  $p_0>1$ which is close to $1$ such that
\begin{equation}
p_0\mu-\frac{p_0(p_0-1)\chi^2}{2}(C_7C_{p_0+1})^{\frac{1}{p_0+1}}-(p_0-1)\xi\eta \rho>0,
\label{cz2ssderrr.511411sssdffssddffsddfffdffrtssdffedd4}
\end{equation}
thus, we can choose  $\varepsilon_1$ and $\varepsilon_2$  appropriately  small such that
 \begin{equation}0<\varepsilon_1+\varepsilon_2<p_0\mu-\frac{p_0(p_0-1)\chi^2}{2}(C_7C_{p_0+1})^{\frac{1}{p_0+1}}-(p_0-1)\xi\eta \rho.
\label{cz2.5kk1214114114ssdddrrttrrgssdeersdddtttgkkll}
\end{equation}
Collecting \dref{cz2.511411sssdffssddffsddfffdffrtssdffedd4} and  \dref{cz2.5kk1214114114ssdddrrttrrgssdeersdddtttgkkll}, we derive that for some $p_0>1$, there exists a positive constant $C_9$
such that
%Let $\sigma_{r,{q_0},\lambda_0,\chi, M_\psi}=\lambda_0+{A}_1\lambda_0^{- {q_0} }\chi^{ {q_0}+1}C_{ {q_0}+1}.$
%Then by Lemma \ref{lemma45630223116} and ${q_0}\geq1$, we have
%$$\min_{\lambda_0>0}\sigma_{r,{q_0},\lambda_0,\chi, M_\psi}=\frac{\chi M_\psi C_{ {q_0}+1}^{\frac{1}{ {q_0}+1}}}{ {q_0}}.$$
%Choosing $b$ big enough such that  $b>\frac{2^r {q_0} }{ \chi M_\psi C_{ {q_0}+1}^{\frac{1}{ {q_0}+1}}}$,
%then there exist $\varepsilon_1$ and $\varepsilon_2>$ satisfying $$\varepsilon_1=\varepsilon_2\in (0, \frac{ b-\sigma_{r,{q_0},\lambda_0,\chi, M_\psi}}{2}),$$ such that
%$$\varepsilon_1+\varepsilon_2+\lambda_0+{A}_1\lambda_0^{- {q_0} }\chi^{ {q_0}+1}C_{ {q_0}+1}- \mu\leq0.$$
%Therefore, \dref{cz2.5kk1214114114rrggkkll} implies that
\begin{equation}
\begin{array}{rl}
&\disp{\int_{\Omega}u^{{p_0}}(x,t) dx\leq C_9~~\mbox{for all}~~t\in (s_0, T_{max}).}\\
%\leq&\disp{\int_\Omega (|f(x,t)|+L)|u|^{q+1} dx}\\
%\leq&\disp{\int_\Omega (k_1|u|^\alpha+k_2)|u|^{q+1} dx}\\
%=&\disp{\int\int\triangle J(x-y)u(y)(u|u|^q(x))dydx.}
%\leq&\disp{(|g|_{L^\infty(0,\omega; L^\infty(\Omega))}+1)(|\Omega|+1)^{\frac{1}{2}}|u|^{q+1}_{q+2}.}\\
%\leq&\disp{\frac{q+1}{q+2}((|f|_{L^\infty(0,\omega; L^\infty(\Omega))}+1)(|\Omega|+1)^{\frac{1}{2}})^{\frac{q+2}{q+1}}|u|^{q+2}_{q+2}+\frac{1}{q+2}}\\
%\leq&\disp{\frac{q+1}{q+2}(|f|_{L^\infty(0,\omega; L^\infty(\Omega))}+1)^2(|\Omega|+1)|u|^{q+2}_{q+2}+\frac{1}{q+2}.}\\
\end{array}
\label{cz2.5kk1214114114rrggkklljjuu}
\end{equation}
%and
%for any $T>0,$ there exists a positive constant $C_4$ such that
%\begin{equation}
%\begin{array}{rl}
%&\disp{\int_{s_0}^T
%\int_\Omega u^{{{q_0}+1}} dxds\leq C_4(T+1)~~\mbox{for all}~~t\in (s_0, T).}\\
%%\leq&\disp{\int_\Omega (|f(x,t)|+L)|u|^{q+1} dx}\\
%%\leq&\disp{\int_\Omega (k_1|u|^\alpha+k_2)|u|^{q+1} dx}\\
%%=&\disp{\int\int\triangle J(x-y)u(y)(u|u|^q(x))dydx.}
%%\leq&\disp{(|g|_{L^\infty(0,\omega; L^\infty(\Omega))}+1)(|\Omega|+1)^{\frac{1}{2}}|u|^{q+1}_{q+2}.}\\
%%\leq&\disp{\frac{q+1}{q+2}((|f|_{L^\infty(0,\omega; L^\infty(\Omega))}+1)(|\Omega|+1)^{\frac{1}{2}})^{\frac{q+2}{q+1}}|u|^{q+2}_{q+2}+\frac{1}{q+2}}\\
%%\leq&\disp{\frac{q+1}{q+2}(|f|_{L^\infty(0,\omega; L^\infty(\Omega))}+1)^2(|\Omega|+1)|u|^{q+2}_{q+2}+\frac{1}{q+2}.}\\
%\end{array}
%\label{cz2.5kk1ss21411eertyy4114rrggkkll}
%\end{equation}
Next,
we fix $q <\frac{2{p_0}}{(2-{p_0})^+}$
and choose some
 $\alpha> \frac{1}{2}$ such that
\begin{equation}
q <\frac{1}{\frac{1}{p_0}-\frac{1}{2}+\frac{2}{2}(\alpha-\frac{1}{2})}\leq\frac{2{p_0}}{(2-{p_0})^+}.
\label{fghgbhnjcz2.5ghju48cfg924ghyuji}
\end{equation}
%which implies that
%\begin{equation}[-\alpha-\frac{2}{2}(\frac{1}{q_0}-\frac{1}{q})]\frac{q_0}{r-1}>-1.
%\label{fghgbhnjcz2.5ghju48cfg924ghyuji}
%\end{equation}
%Hence, Lemma 3.4 applies to warrant that
%Here we use the variation-of-constants formula
%for $v$ to find C1 > 0 such that
Now, involving the variation-of-constants formula
for $v$, we have
\begin{equation}
v(t)=e^{-(A+1)}v(s_0) +\int_{s_0}^{t}e^{-(t-s)(A+1)}u(s) ds,~~ t\in(s_0, T_{max}).
\label{fghbnmcz2.5ghju48cfg924ghyuji}
\end{equation}
Hence, it follows from \dref{eqx45xx12112} and  %\dref{gbhnhnjmkfgbhnn6291}, \dref{ghnjbhnjgeqx45xx1211},
 \dref{fghbnmcz2.5ghju48cfg924ghyuji} that
\begin{equation}
\begin{array}{rl}
&\disp{\|(A+1)^\alpha v(t)\|_{L^q(\Omega)}}\\
\leq&\disp{C_{10}\int_{s_0}^{t}(t-s)^{-\alpha-\frac{2}{2}(\frac{1}{p_0}-\frac{1}{q})}e^{-\mu(t-s)}\|u(s)\|_{L^{p_0}(\Omega)}ds+
C_{10}s_0^{-\alpha-\frac{2}{2}(1-\frac{1}{q})}\|v(s_0,t)\|_{L^1(\Omega)}}\\
%\leq&\disp{C(q)\left(\int_{s_0}^{t}(t-s)^{[-\alpha-\frac{2}{2}(\frac{1}{q_0}-\frac{1}{q})]\frac{q_0}{r-1}}e^{-\frac{r\mu}{r-1}(t-s)}ds\right)^{\frac{r-1}{q_0}}
%\left(\int_{s_0}^{t}\|u(s)\|_{L^r(\Omega)}^rds\right)^{\frac{1}{q_0}}}\\
%&+\disp{c\tau^{-\alpha-\frac{2}{2}(1-\frac{1}{q})}( C+(\int_{\Omega}{v}_0(x)- C)e^{-t})}\\
\leq&\disp{C_{10}\int_{0}^{+\infty}\sigma^{-\alpha-\frac{2}{2}(\frac{1}{p_0}-\frac{1}{q})}e^{-\mu\sigma}d\sigma
+C_{10}s_0^{-\alpha-\frac{2}{2}(1-\frac{1}{q})}\beta.}\\
\end{array}
\label{gnhmkfghbnmcz2.5ghju48cfg924ghyuji}
\end{equation}
Hence, in light of Lemma \ref{lemmaggbb41ffgg},  due to \dref{fghgbhnjcz2.5ghju48cfg924ghyuji}  and \dref{gnhmkfghbnmcz2.5ghju48cfg924ghyuji}, we have
\begin{equation}
\int_{\Omega}|\nabla {v}(t)|^{q}\leq C_{11}~~\mbox{for all}~~ t\in(s_0, T_{max})
\label{ffgbbcz2.5ghju48cfg924ghyuji}
\end{equation}
and $q\in[1,\frac{2{p_0}}{(2-{p_0})^+})$.
Finally, in view of \dref{eqx45xx12112} and \dref{ffgbbcz2.5ghju48cfg924ghyuji},
%\dref{ghffgbbcz2.5ghju48cfg924ghyuji},
 we can get \begin{equation}
\int_{\Omega}|\nabla {v}(t)|^{q}\leq C_{12}~~\mbox{for all}~~ t\in(0, T_{max})~~\mbox{and}~~q\in[1,\frac{2{p_0}}{(2-{p_0})^+})
\label{ffgbbcz2.5ghjusseeeddd48cfg924ghyuji}
\end{equation}
with some positive constant $C_{12}.$
Now, due to the Sobolev imbedding theorems and $N=2$, we conclude that
\begin{equation}
\| {v}(\cdot,t)\|_{L^\infty(\Omega)}\leq C_{13}~~\mbox{for all}~~ t\in(0, T_{max}).
\label{ffgbbcz2.5ghjusseeeddd48cfg924gddffhyuji}
\end{equation}
%for all $q\in[1,\frac{N{q_0}}{(N-{q_0})^+})$.

Applying the Young inequality, one obtains from  \dref{1.1sddd6ssdddd3072x}, \dref{ghyyyuuu1.1} and  \dref{ffgbbcz2.5ghjusseeeddd48cfg924gddffhyuji} that for any $p>\max\{2,p_0-1\}$
\begin{equation}
\begin{array}{rl}
&\disp\frac{d}{dt}\disp\int_{\Omega}e^{\xi w}a^p+p(p-1)\int_{\Omega}
e^{\xi w}a^{p-2}|\nabla a|^2+p\mu\int_\Omega e^{2\xi w}a^{p+1}\\
=&\disp{p(p-1)\chi\int_{\Omega}
e^{\xi w}a^{p-1}\nabla a\cdot \nabla v+(p-1)\xi\int_\Omega e^{\xi w}a^{p}vw}\\
&+\disp{\int_\Omega e^{\xi w}a^{p}\{(p+1)+(p-1)\xi\eta w(w-1)+p\mu (1-w)\}}\\
&+\disp{\int_\Omega e^{2\xi w}a^{p+1}(p-1)\xi\eta w}\\
\leq&\disp{\frac{p(p-1)}{2}\int_{\Omega}
e^{\xi w}a^{p-2}|\nabla a|^2+\frac{p(p-1)}{2}\chi^2\int_{\Omega}
e^{\xi w}a^{p}|\nabla v|^2+(p-1)\xi\int_\Omega e^{\xi w}a^{p}vw}\\
&+\disp{\int_\Omega e^{\xi w}a^{p}\{(p+1)+(p-1)\xi\eta w(w-1)+p\mu (1-w)\}}\\
&+\disp{\int_\Omega e^{2\xi w}a^{p+1}(p-1)\xi\eta w}\\
\leq&\disp{\frac{p(p-1)}{2}\int_{\Omega}
e^{\xi w}a^{p-2}|\nabla a|^2+\frac{p(p-1)}{2}\chi^2\int_{\Omega}
e^{\xi w}a^{p}|\nabla v|^2+C_{14}\int_\Omega a^{p+1}}\\
\leq&\disp{\frac{p(p-1)}{2}\int_{\Omega}
e^{\xi w}a^{p-2}|\nabla a|^2+\frac{p(p-1)}{2}\chi^2e^{\xi \rho}\int_{\Omega}
a^{p}|\nabla v|^2+C_{14}\int_\Omega a^{p+1}~~\mbox{for all}~~ t\in(0,T_{max}).}\\
%\leq&\disp{\int_\Omega (|f(x,t)|+L)|u|^{q+1} dx}\\
%\leq&\disp{\int_\Omega (k_1|u|^\alpha+k_2)|u|^{q+1} dx}\\
%=&\disp{\int\int\triangle J(x-y)u(y)(u|u|^q(x))dydx.}
%\leq&\disp{(|g|_{L^\infty(0,\omega; L^\infty(\Omega))}+1)(|\Omega|+1)^{\frac{1}{2}}|u|^{q+1}_{q+2}.}\\
%\leq&\disp{\frac{q+1}{q+2}((|f|_{L^\infty(0,\omega; L^\infty(\Omega))}+1)(|\Omega|+1)^{\frac{1}{2}})^{\frac{q+2}{q+1}}|u|^{q+2}_{q+2}+\frac{1}{q+2}}\\
%\leq&\disp{\frac{q+1}{q+2}(|f|_{L^\infty(0,\omega; L^\infty(\Omega))}+1)^2(|\Omega|+1)|u|^{q+2}_{q+2}+\frac{1}{q+2}.}\\
\end{array}
\label{qqqqcz2.511411sssdffrtgysedd4}
\end{equation}
Next, with the help of  the Gagliardo--Nirenberg inequality (see e.g.  \cite{Zhengaass}) yields that
\begin{equation}
\begin{array}{rl}
C_{14}\disp\int_\Omega a^{p+1}=&\disp{
C_{14}\|  {{a^{\frac{p}{2}}}}\|
^{2\frac{(p+1)}{p}}_{L^{2\frac{(p+1)}{p} }(\Omega)}}\\
\leq&\disp{C_{15}(\|\nabla   {{a^{\frac{p}{2}}}}\|_{L^2(\Omega)}^{\mu_1}\|  {{a^{\frac{p}{2}}}}\|_{L^\frac{2p_0}{p}(\Omega)}^{1-\mu_1}+\|  {{a^{\frac{p}{2}}}}\|_{L^\frac{2p_0}{p}(\Omega)})^{2\frac{(p+1)}{p}}}\\
\leq&\disp{C_{16}(\|\nabla   {{a^{\frac{p}{2}}}}\|_{L^2(\Omega)}^{2\mu_1}+1)}\\
=&\disp{C_{16}(\|\nabla   {{u^{\frac{p}{2}}}}\|_{L^2(\Omega)}^{\frac{2(p-p_0+1)}{p}}+1)}\\
%\leq&\disp{\disp\int_\Omega (|f(x,t)|+L)|u|^{q+1}  }\\
%\leq&\disp{\disp\int_\Omega (k_1|u|^\alpha+k_2)|u|^{q+1}  }\\
%=&\disp{\disp\int\disp\int\triangle J(x-y)u(y)(u|u|^q(x))dy .}
%\leq&\disp{(|g|_{L^\infty(0,\omega; L^\infty(\Omega))}+1)(|\Omega|+1)^{\frac{1}{2}}|u|^{q+1}_{q+2}.}\\
%\leq&\disp{\frac{q+1}{q+2}((|f|_{L^\infty(0,\omega; L^\infty(\Omega))}+1)(|\Omega|+1)^{\frac{1}{2}})^{\frac{q+2}{q+1}}|u|^{q+2}_{q+2}+\frac{1}{q+2}}\\
%\leq&\disp{\frac{q+1}{q+2}(|f|_{L^\infty(0,\omega; L^\infty(\Omega))}+1)^2(|\Omega|+1)|u|^{q+2}_{q+2}+\frac{1}{q+2}.}\\
\end{array}
\label{cz2.5630ddffgggghhhddff22222ikopl2sdfg44}
\end{equation}
with some positive constants $C_{15}, C_{16}$ and
$$\mu_1=\frac{\frac{{p}}{p_0}-\frac{p}{p+1}}{\frac{{p}}{p_0}}=
\frac{p+1-p_0}{p+1}\in(0,1).$$
Since, $p_0>1$ yields $p_0<\frac{2{p_0}}{2(2-{p_0})^+}$, in light of the H\"{o}lder inequality and \dref{ffgbbcz2.5ghjusseeeddd48cfg924ghyuji}, we derive
\begin{equation}
\begin{array}{rl}
 \disp\frac{\chi^2p({p}-1)}{2}e^{\xi \rho}\disp\int_\Omega{{a^{p }}} |\nabla {v}|^2\leq&\disp{ \disp\frac{\chi^2p({p}-1)}{2}e^{\xi \rho}\left(\disp\int_\Omega{{a^{\frac{p_0}{p_0-1} p }}}\right)^{\frac{p_0-1}{p_0}}\left(\disp\int_\Omega |\nabla {v}|^{2p_0}\right)^{\frac{1}{p_0}}}\\
\leq&\disp{C_{17}\|  {{a^{\frac{p}{2}}}}\|^{2}_{L^{2\frac{p_0}{p_0-1} }(\Omega)},}\\
\end{array}
\label{cz2.57151hhkkhhhjukildrfthjjhhhhh}
\end{equation}
where $C_{17}$ is a positive constant.
Since %$m\geq 1 $,
$q_0> 1$ and $p>q_0-1$,
%$\kappa>\frac{3}{2}$,
we have
%$$\frac{2\kappa(2-2 m )+3m-3}{2\kappa-3}\leq1,$$
%%$p>\max\{1-m,\frac{2\kappa(2-2 m )+3m-3}{2\kappa-3}\}$ and $\kappa>\frac{3}{2}$,
%%we have
%therefore
$$\frac{p_0}{p}\leq\frac{p_0}{p_0-1}<+\infty,$$
which together with the Gagliardo--Nirenberg inequality (see e.g.  \cite{Zhengaass})
%Lemma \ref{lemma41}
 implies that
\begin{equation}
\begin{array}{rl}
C_{17}\|  {{a^{\frac{p}{2}}}}\|
^{2}_{L^{2\frac{p_0}{p_0-1} }(\Omega)}\leq&\disp{C_{18}(\|\nabla   {{a^{\frac{p}{2}}}}\|_{L^2(\Omega)}^{\mu_2}\|  {{a^{\frac{p}{2}}}}\|_{L^\frac{2p_0}{p}(\Omega)}^{1-\mu_2}+\|  {{a^{\frac{p}{2}}}}\|_{L^\frac{2p_0}{p}(\Omega)})^{2}}\\
\leq&\disp{C_{19}(\|\nabla   {{a^{\frac{p}{2}}}}\|_{L^2(\Omega)}^{2\mu_2}+1)}\\
=&\disp{C_{19}(\|\nabla   {{a^{\frac{p}{2}}}}\|_{L^2(\Omega)}^{\frac{2(p-p_0+1)}{p}}+1)}\\
%\leq&\disp{\disp\int_\Omega (|f(x,t)|+L)|u|^{q+1}  }\\
%\leq&\disp{\disp\int_\Omega (k_1|u|^\alpha+k_2)|u|^{q+1}  }\\
%=&\disp{\disp\int\disp\int\triangle J(x-y)u(y)(u|u|^q(x))dy .}
%\leq&\disp{(|g|_{L^\infty(0,\omega; L^\infty(\Omega))}+1)(|\Omega|+1)^{\frac{1}{2}}|u|^{q+1}_{q+2}.}\\
%\leq&\disp{\frac{q+1}{q+2}((|f|_{L^\infty(0,\omega; L^\infty(\Omega))}+1)(|\Omega|+1)^{\frac{1}{2}})^{\frac{q+2}{q+1}}|u|^{q+2}_{q+2}+\frac{1}{q+2}}\\
%\leq&\disp{\frac{q+1}{q+2}(|f|_{L^\infty(0,\omega; L^\infty(\Omega))}+1)^2(|\Omega|+1)|u|^{q+2}_{q+2}+\frac{1}{q+2}.}\\
\end{array}
\label{cz2.563022222ddrffghhikopl2sdfg44}
\end{equation}
with some positive constants $C_{18}, C_{19}$ and
$$\mu_2=\frac{\frac{{p}}{p_0}-\frac{p}{\frac{p_0}{p_0-1} p }}{\frac{{p}}{p_0}}\in(0,1).$$

Moreover, an application of the Young inequality shows that
%Now, in view of the Young inequality, we derive that
\begin{equation}
\begin{array}{rl}
C_{14}\disp\int_\Omega a^{p+1}+\disp\frac{\chi^2p({p}-1)}{2}e^{\xi \rho}\disp\int_\Omega  a^{{p}}|\nabla v|^2 &\leq\disp{
\frac{p({{p}-1})}{4}\int_{\Omega}a^{{{p}-2}}|\nabla a|^2+C_{20}}\\
&\leq\disp{
\frac{p({{p}-1})}{4}\int_{\Omega}e^{\xi w}a^{{{p}-2}}|\nabla a|^2+C_{20}.}\\
%\leq&\disp{\int_\Omega (|f(x,t)|+L)|u|^{q+1} dx}\\
%\leq&\disp{\int_\Omega (k_1|u|^\alpha+k_2)|u|^{q+1} dx}\\
%=&\disp{\int\int\triangle J(x-y)u(y)(u|u|^q(x))dydx.}
%\leq&\disp{(|g|_{L^\infty(0,\omega; L^\infty(\Omega))}+1)(|\Omega|+1)^{\frac{1}{2}}|u|^{q+1}_{q+2}.}\\
%\leq&\disp{\frac{q+1}{q+2}((|f|_{L^\infty(0,\omega; L^\infty(\Omega))}+1)(|\Omega|+1)^{\frac{1}{2}})^{\frac{q+2}{q+1}}|u|^{q+2}_{q+2}+\frac{1}{q+2}}\\
%\leq&\disp{\frac{q+1}{q+2}(|f|_{L^\infty(0,\omega; L^\infty(\Omega))}+1)^2(|\Omega|+1)|u|^{q+2}_{q+2}+\frac{1}{q+2}.}\\
\end{array}
\label{111111cz2aasweeeddfff.5ssedfssddff114114}
\end{equation}
Inserting \dref{111111cz2aasweeeddfff.5ssedfssddff114114} into \dref{qqqqcz2.511411sssdffrtgysedd4}, we conclude that
\begin{equation}
\begin{array}{rl}
&\disp{\frac{d}{dt}\disp\int_{\Omega}e^{\xi w}a^p+\frac{p({{p}-1})}{4}\int_{\Omega}
e^{\xi w}a^{p-2}|\nabla a|^2+p\mu\int_\Omega e^{2\xi w}a^{p+1}\leq C_{21}.}\\
%\leq&\disp{\int_\Omega (|f(x,t)|+L)|u|^{q+1} dx}\\
%\leq&\disp{\int_\Omega (k_1|u|^\alpha+k_2)|u|^{q+1} dx}\\
%=&\disp{\int\int\triangle J(x-y)u(y)(u|u|^q(x))dydx.}
%\leq&\disp{(|g|_{L^\infty(0,\omega; L^\infty(\Omega))}+1)(|\Omega|+1)^{\frac{1}{2}}|u|^{q+1}_{q+2}.}\\
%\leq&\disp{\frac{q+1}{q+2}((|f|_{L^\infty(0,\omega; L^\infty(\Omega))}+1)(|\Omega|+1)^{\frac{1}{2}})^{\frac{q+2}{q+1}}|u|^{q+2}_{q+2}+\frac{1}{q+2}}\\
%\leq&\disp{\frac{q+1}{q+2}(|f|_{L^\infty(0,\omega; L^\infty(\Omega))}+1)^2(|\Omega|+1)|u|^{q+2}_{q+2}+\frac{1}{q+2}.}\\
\end{array}
\label{cz2aasweee.5ssedfff114114}
\end{equation}
Therefore, integrating the above inequality  with respect to $t$ yields
%, we derive that %there exists a positive constant $C_{22}$ such that
\begin{equation}
\begin{array}{rl}
\|a(\cdot, t)\|_{L^{{p}}(\Omega)}\leq C_{22} ~~ \mbox{for all}~~p\geq1~~\mbox{and}~~  t\in(0,T_{max}) \\
\end{array}
\label{cz2.5g556789hhjui78jj90099}
\end{equation}
for some positive constant $C_{22}$.
The proof of Lemma \ref{lemma45630223} is complete.
\end{proof}
\begin{remark}
Since, in this paper, we only assume  that $\mu>0$  which is different from  \cite{PangPang1} (see the hypothesis of Lemma 3.2 to  \cite{PangPang1}), firstly  by using the technical lemma (see Lemma \ref{lemma45630223116}), we   could conclude  the boundedness of $\int_{\Omega}{a^{q_0}}(q_0 > 1),$ then
in light of the variation-of-constants formula and  $L^q$-$L^p$ estimates for the heat semigroup, we may finally derive the boundedness of
$\int_{\Omega}{a^{p}}$ (for any $p> 1$).

%$L^\infty$ bound of $a$establish the boundedness of
% $\int_{\Omega}{a^{q_0}}(q_0 > 1),$ where $a=ue^{-\xi w}.$
% In addition, we shall involve the variation-of-constants formula and $L^p$-estimate techniques
%%for variable $v$
%to gain
% % enlarges the parameter range
%%%$\frac{2}{N}\leq\alpha+\beta<{r}-1+\alpha,$  Ishida et al. (\cite{Ishida}), who proved the possibility of global, in the cases $f(u)\equiv0$.
%%%Indeed, in case of $q+1<m+\frac{2}{N}$,
%%%Theorem \ref{theorem3} shows that solutions of model \dref{1.1}
%%%are global and bounded for any $b>0$, which implies large diffusion exponent $m$ benefits the boundedness of solutions.
%%
%% In the absence of the logistic source (i.e. $f(u)\equiv0$) for problem \dref{1.1}, it
%%is not difficult to obtain that the solutions under the conditions of Theorem \ref{theorem3} are
%%uniformly bounded, which coincides with the result of  Osaki et al. \cite{Osakix391},
%%
%%
%
% (iii) If $w\equiv0$, Theorem \ref{theorem3}  extends the results of Theorem 1.1 of Wang et al. (\cite{Wang79}).

%
% (vi) With the help of precise estimation,
% the ideas of our paper can also be used to deal with the three-dimensional
%chemotaxis-fluid system with (generalized) logistic
%source.
\end{remark}

Our main result on global existence and boundedness thereby becomes a straightforward consequence
of Lemma \ref{lemma70} and Lemma \ref{lemma45630223}.
%Indeed, collecting the above Lemmata, in the following,
%by invoking a Moser-type iteration (see Lemma A.1 in \cite{Tao794}) and the standard estimate for Neumann semigroup (or the standard parabolic regularity
%arguments), we will prove Theorem \ref{theorem3}.
%Collecting the above Lemmas, we can prove Theorem \ref{theorem3}.

{\bf The proof of Theorem \ref{theorem3}}~

%\begin{lemma}\label{lemma45ddfgghhj630223}
%Under the assumptions of Theorem \ref{theorem3}, there exists $C>0$ independent of $T_{max}$ such that the solution $(a, v, w)$ of \dref{ghyyyuuu1.1} satisfies
%%Let $\mu,\chi$ and $\xi$ be the positive constants. Assuming that
%% $(a,v,w)$ is  a solution to \dref{ghyyyuuu1.1} on $(0,T_{max})$.
%%%If
%%%  \begin{equation}\label{gddffffnjjmmx1.731426677gg}
%%%\mu>\frac{(N-2)_{+}}{N}\chi C^{\frac{1}{\frac{N}{2}+1}}_{\frac{N}{2}+1}~~\mbox{and} ~~N\leq4,
%%%\begin{array}{ll}\\
%%% \end{array}
%%%\end{equation}
%%%or
%%%\begin{equation}\label{99gddffffnjjmmx1.731426677gg}
%%%\mu~~\mbox{is appropriately large},
%%%\begin{array}{ll}
%%% \end{array}
%%%\end{equation}
%%%Assume that    $\mu>\frac{(N-2)_+}{N}\chi C_{\frac{N}{2}+1}^{\frac{1}{\frac{N}{2}+1}}$.
%%% and $T>0.$
%%%$\tau=\min\{1,\frac{1}{2}T_{max}\}$.
%%Then %for any $T\in(s,T_{max})$,
%% for all $p>1$,
%%there exists a positive constant $C:=C(p,|\Omega|,\mu,\chi,\xi,\beta)$ such that %for any $T\in(0,T_{max})$ and
%% the solution of
%%\dref{1.1} satisfies
%%Then for all $k\in(1,([r]+1)N)$ one can
%%find a positive constant $C$ such that
%\begin{equation}
%\|a(\cdot, t)\|_{L^\infty(\Omega)}\leq C ~~~\mbox{for all}~~ t\in(0,T_{max}).
%\label{zjscz2.5297x9630222sddfggrtgyy2114}
%\end{equation}
%%Moreover, assume $T>0$, then there exists a positive constant such that
%%\begin{equation}
%%\int_t^{t+\tau}\int_{\Omega}u^{\delta+1}(x,t)dxds\leq C(T)~~~\mbox{for all}~~ t\in(0,T).
%%\label{zjscz2.5297ssdeeex96302222114}
%%\end{equation}
%%holds.
%\end{lemma}
\begin{proof}
Firstly, in light of \dref{1.1sddd6ssdddd3072x},  due to Lemma \ref{lemma45630223}, we derive that there exist positive constants $p_0>2$ and $C_1$ such that
\begin{equation}
\begin{array}{rl}
\|u(\cdot, t)\|_{L^{{p_0}}(\Omega)}\leq C_{1} ~~ \mbox{for all}~~t\in(0,T_{max}). \\
\end{array}
\label{cz2.5g556789hhjssdddui78jj90099}
\end{equation}
Next,
employing  the standard estimate for Neumann semigroup provides $C_2$ and $C_3 > 0$ such that
%Now, involving the variation-of-constants formula
%for $v$, we have
%\begin{equation}
%v(t)=e^{-\tau(A+1)}v(s_0) +\int_{s_0}^{t}e^{-(t-s)(A+1)}u(s) ds,~~ t\in(0, T_{max}).
%\label{11111fghbnmcz2.5ghju48cfg924ghyuji}
%\end{equation}
%Hence, it follows from \dref{eqx45xx12112} and  %\dref{gbhnhnjmkfgbhnn6291}, \dref{ghnjbhnjgeqx45xx1211},
% \dref{fghbnmcz2.5ghju48cfg924ghyuji} that
\begin{equation}
\begin{array}{rl}
&\disp{\|\nabla v(t)\|_{L^\infty(\Omega)}}\\
\leq&\disp{C_2\int_{s_0}^{t}(t-s)^{-\alpha-\frac{2}{2p_0}}e^{-\mu(t-s)}\|u(s)\|_{L^{q_0}(\Omega)}ds+
C_2s_0^{-\alpha}\|v(s_0,t)\|_{L^\infty(\Omega)}}\\
%\leq&\disp{C(q)\left(\int_{s_0}^{t}(t-s)^{[-\alpha-\frac{2}{2}(\frac{1}{q_0}-\frac{1}{q})]\frac{q_0}{r-1}}e^{-\frac{r\mu}{r-1}(t-s)}ds\right)^{\frac{r-1}{q_0}}
%\left(\int_{s_0}^{t}\|u(s)\|_{L^r(\Omega)}^rds\right)^{\frac{1}{q_0}}}\\
%&+\disp{c\tau^{-\alpha-\frac{2}{2}(1-\frac{1}{q})}( C+(\int_{\Omega}{v}_0(x)- C)e^{-t})}\\
\leq&\disp{C_2\int_{0}^{+\infty}\sigma^{-\alpha-\frac{2}{2p_0}}e^{-\mu\sigma}d\sigma
+C_2s_0^{-\alpha}\beta}\\
\leq&\disp{C_3~~\mbox{for all}~~ t\in(0, T_{max}).}\\
\end{array}
\label{11111gnhmkfghbnmcz2.5ghju48cfg924ghyujiffggg}
\end{equation}
Applying the Young inequality, in light of \dref{1.1sddd6ssdddd3072x} and the first equation of \dref{ghyyyuuu1.1}, one obtains from \dref{11111gnhmkfghbnmcz2.5ghju48cfg924ghyujiffggg} that for any $p\geq4$
%A straightforward differentiation, using \dref{ghyyyuuu1.1} and two integrations by parts yields
\begin{equation}
\begin{array}{rl}
&\disp\frac{d}{dt}\disp\int_{\Omega}e^{\xi w}a^p+p(p-1)\int_{\Omega}
e^{\xi w}a^{p-2}|\nabla a|^2+\int_{\Omega}e^{\xi w}a^p\\
=&\disp{\xi\int_{\Omega}e^{\xi w}a^p\cdot\{-vw+\eta w(1-ae^{\xi w}-w)\}}\\
&+\disp{p\int_\Omega e^{\xi w}a^{p-1}\cdot\{e^{-\xi w}\nabla\cdot(e^{\xi w}\nabla a)-\chi e^{-\xi w}\nabla\cdot(e^{\xi w}a\nabla v)\}}\\
&+\disp{a\xi vw +a(\mu-\xi\eta w)(1-ae^{\xi w}-w)\}+p\int_{\Omega}e^{\xi w}a^p}\\
\leq&\disp{\frac{p(p-1)}{4}\int_{\Omega}
e^{\xi w}a^{p-2}|\nabla a|^2+p(p-1)\chi^2C_4\int_{\Omega}
e^{\xi w}a^{p}}\\
&+\disp{(p-1)\xi\int_\Omega e^{\xi w}a^{p}vw+\int_\Omega e^{\xi w}a^{p}\{(p+1)+(p-1)\xi\eta w(w-1)+p\mu (1-w)\}}\\
&+\disp{\int_\Omega e^{2\xi w}a^{p+1}[(p-1)\xi\eta w-p\mu]}\\
\leq&\disp{\frac{p(p-1)}{4}\int_{\Omega}
e^{\xi w}a^{p-2}|\nabla a|^2+C_5p^2(\int_{\Omega}
a^{p+1}+1)~~\mbox{for all}~~ t\in(0,T_{max}),}\\
%\leq&\disp{\int_\Omega (|f(x,t)|+L)|u|^{q+1} dx}\\
%\leq&\disp{\int_\Omega (k_1|u|^\alpha+k_2)|u|^{q+1} dx}\\
%=&\disp{\int\int\triangle J(x-y)u(y)(u|u|^q(x))dydx.}
%\leq&\disp{(|g|_{L^\infty(0,\omega; L^\infty(\Omega))}+1)(|\Omega|+1)^{\frac{1}{2}}|u|^{q+1}_{q+2}.}\\
%\leq&\disp{\frac{q+1}{q+2}((|f|_{L^\infty(0,\omega; L^\infty(\Omega))}+1)(|\Omega|+1)^{\frac{1}{2}})^{\frac{q+2}{q+1}}|u|^{q+2}_{q+2}+\frac{1}{q+2}}\\
%\leq&\disp{\frac{q+1}{q+2}(|f|_{L^\infty(0,\omega; L^\infty(\Omega))}+1)^2(|\Omega|+1)|u|^{q+2}_{q+2}+\frac{1}{q+2}.}\\
\end{array}
\label{cz2.511411sssdfffrtttfgggsedd4}
\end{equation}
where $C_4$ and $C_5$ are independent of $p$. Here and throughout the proof of Theorem \ref{theorem3}, we shall
denote by $C_i(i\in \mathbb{N})$ several positive constants independent of $p$.
Therefore, \dref{cz2.511411sssdfffrtttfgggsedd4} implies that
\begin{equation}
\begin{array}{rl}
&\disp\frac{d}{dt}\disp{\int_{\Omega}e^{\xi w}a^p+C_6\int_{\Omega}
|\nabla a^{\frac{p}{2}}|^2+\int_{\Omega}e^{\xi w}a^p\leq C_5p^2(\int_{\Omega}
a^{p+1}+1)~~\mbox{for all}~~ t\in(0,T_{max}).}\\
%\leq&\disp{\int_\Omega (|f(x,t)|+L)|u|^{q+1} dx}\\
%\leq&\disp{\int_\Omega (k_1|u|^\alpha+k_2)|u|^{q+1} dx}\\
%=&\disp{\int\int\triangle J(x-y)u(y)(u|u|^q(x))dydx.}
%\leq&\disp{(|g|_{L^\infty(0,\omega; L^\infty(\Omega))}+1)(|\Omega|+1)^{\frac{1}{2}}|u|^{q+1}_{q+2}.}\\
%\leq&\disp{\frac{q+1}{q+2}((|f|_{L^\infty(0,\omega; L^\infty(\Omega))}+1)(|\Omega|+1)^{\frac{1}{2}})^{\frac{q+2}{q+1}}|u|^{q+2}_{q+2}+\frac{1}{q+2}}\\
%\leq&\disp{\frac{q+1}{q+2}(|f|_{L^\infty(0,\omega; L^\infty(\Omega))}+1)^2(|\Omega|+1)|u|^{q+2}_{q+2}+\frac{1}{q+2}.}\\
\end{array}
\label{cz2.511411sssdffdffffggfrtttfgggsedd4}
\end{equation}
%which, upon a comparison argument, yields
Next, once more by means of the Gagliardo--Nirenberg inequality, we can  estimate
%By the Gagliardo--Nirenberg inequality,
%Next, with the help of the Gagliardo--Nirenberg inequality, we derive that
\begin{equation}
\begin{array}{rl}
C_5p^2
\disp\int_\Omega  a^{p+1}=&\disp{ C_5p^2\|a^{\frac{p}{2}}\|_{L^\frac{2(p+1)}{p}(\Omega)}^{\frac{2(p+1)}{p}} }\\
\leq&\disp{ C_7p^2(\|\nabla a^{\frac{p}{2}}\|_{L^{2}(\Omega)}^{\frac{2(p+1)}{p}\varsigma_1}
\| a^{\frac{p}{2}}\|_{L^1(\Omega)}^{\frac{2(p+1)}{p}\varsigma_1}+ \| a^{\frac{p}{2}}\|_{L^1(\Omega)}^{\frac{2(p+1)}{p}}) }\\
=&\disp{ C_7p^2(\|\nabla a^{\frac{p}{2}}\|_{L^{2}(\Omega)}^{\frac{p+2}{p}}
\| a^{\frac{p}{2}}\|_{L^1(\Omega)}+ \| a^{\frac{p}{2}}\|_{L^1(\Omega)}^{\frac{2(p+1)}{p}})}\\
%\leq&\disp{  C_7p^2(\|\nabla a^{\frac{p}{2}}\|_{L^{2}(\Omega)}^{\frac{2N}{N+2}}
%\| a^{\frac{p}{2}}\|_{L^1(\Omega)}^{\frac{4}{N+2}}+ \| a^{\frac{p}{2}}\|_{L^1(\Omega)}^{2})}\\
\leq&\disp{  C_6\|\nabla a^{\frac{p}{2}}\|_{L^{2}(\Omega)}^{2}+C_{8}p^{\frac{4p}{p-2}}
\| a^{\frac{p}{2}}\|_{L^1(\Omega)}^{\frac{2p}{p-2}}+  C_7p^2\| a^{\frac{p}{2}}\|_{L^1(\Omega)}^{\frac{2(p+1)}{p}}}\\
\leq&\disp{  C_6\|\nabla a^{\frac{p}{2}}\|_{L^{2}(\Omega)}^{2}+C_{9}p^{\frac{4p}{p-2}}
\| a^{\frac{p}{2}}\|_{L^1(\Omega)}^{\frac{2p}{p-2}},}\\
%\leq&\disp{\int_\Omega (|f(x,t)|+L)|u|^{q+1} dx}\\
%\leq&\disp{\int_\Omega (k_1|u|^\alpha+k_2)|u|^{q+1} dx}\\
%=&\disp{\int\int\triangle J(x-y)u(y)(u|u|^q(x))dydx.}
%\leq&\disp{(|g|_{L^\infty(0,\omega; L^\infty(\Omega))}+1)(|\Omega|+1)^{\frac{1}{2}}|u|^{q+1}_{q+2}.}\\
%\leq&\disp{\frac{q+1}{q+2}((|f|_{L^\infty(0,\omega; L^\infty(\Omega))}+1)(|\Omega|+1)^{\frac{1}{2}})^{\frac{q+2}{q+1}}|u|^{q+2}_{q+2}+\frac{1}{q+2}}\\
%\leq&\disp{\frac{q+1}{q+2}(|f|_{L^\infty(0,\omega; L^\infty(\Omega))}+1)^2(|\Omega|+1)|u|^{q+2}_{q+2}+\frac{1}{q+2}.}\\
\end{array}
\label{cz2aasweee.5ssdessderrrffssdffff114114}
\end{equation}
where
$$0<\varsigma_1=\frac{2-\frac{2p}{2(p+1)}}{1-\frac{2}{2}+2}=\frac{p+2}{2(p+1)}<1.$$
Here we have use the fact that $\frac{4p}{p-2}\geq2.$
Therefore, inserting \dref{cz2aasweee.5ssdessderrrffssdffff114114} into \dref{cz2.511411sssdffdffffggfrtttfgggsedd4}, we derive that
\begin{equation}
\begin{array}{rl}
\disp\frac{d}{dt}\disp\int_{\Omega}e^{\xi w}a^p+\int_{\Omega}e^{\xi w}a^p\leq&\disp{ C_{9}p^{\frac{4p}{p-2}}
\| a^{\frac{p}{2}}\|_{L^1(\Omega)}^{\frac{2p}{p-2}}+C_5p^2}\\
\leq&\disp{ C_{10}p^{\frac{4p}{p-2}}\left(\max\{1,
\| u^{\frac{p}{2}}\|_{L^1(\Omega)}\right)^{\frac{2p}{p-2}}.}\\
%\leq&\disp{\int_\Omega (|f(x,t)|+L)|u|^{q+1} {}}\\
%\leq&\disp{\int_\Omega (k_1|u|^\alpha+k_2)|u|^{q+1} {}}\\
%=&\disp{\int\int\triangle J(x-y)u(y)(u|u|^q(x))dy{}.}
%\leq&\disp{(|g|_{L^\infty(0,\omega; L^\infty(\Omega))}+1)(|\Omega|+1)^{\frac{1}{2}}|u|^{q+1}_{q+2}.}\\
%\leq&\disp{\frac{q+1}{q+2}((|f|_{L^\infty(0,\omega; L^\infty(\Omega))}+1)(|\Omega|+1)^{\frac{1}{2}})^{\frac{q+2}{q+1}}|u|^{q+2}_{q+2}+\frac{1}{q+2}}\\
%\leq&\disp{\frac{q+1}{q+2}(|f|_{L^\infty(0,\omega; L^\infty(\Omega))}+1)^2(|\Omega|+1)|u|^{q+2}_{q+2}+\frac{1}{q+2}.}\\
\end{array}
\label{zjscz2.5297x9630111rrd67ddfff512}
\end{equation}
Now, choosing $p_i=2^{i+2}$ and letting $M_i =\max\{1,\sup_{t\in(0,T)}\int_{\Omega} a^{\frac{{p_i}}{2}}\}$ for $T\in (0, T_{max})$
and $i= 1, 2,3,\cdots$.
Then  we obtain from \dref{zjscz2.5297x9630111rrd67ddfff512} that %implies that
%$$\mu_i\leq\max\{\lambda^iM_{i-1}^2,\|u_0\|_{L^\infty(\Omega)}^{p_i}\}$$
%with some $\lambda> 1.$
%Now, by
\begin{equation}
\begin{array}{rl}
&\disp{\frac{d}{dt}\int_{\Omega}e^{\xi w}a^{p_i}+\int_{\Omega}e^{\xi w}a^{p_i}\leq C_{11}{p_i^{\frac{2p_i}{p_i-2}}}
M^{\frac{2p_i}{p_i-2}}_{i-1}(T),}\\
%\leq&\disp{\int_\Omega (|f(x,t)|+L)|u|^{q+1} {}}\\
%\leq&\disp{\int_\Omega (k_1|u|^\alpha+k_2)|u|^{q+1} {}}\\
%=&\disp{\int\int\triangle J(x-y)u(y)(u|u|^q(x))dy{}.}
%\leq&\disp{(|g|_{L^\infty(0,\omega; L^\infty(\Omega))}+1)(|\Omega|+1)^{\frac{1}{2}}|u|^{q+1}_{q+2}.}\\
%\leq&\disp{\frac{q+1}{q+2}((|f|_{L^\infty(0,\omega; L^\infty(\Omega))}+1)(|\Omega|+1)^{\frac{1}{2}})^{\frac{q+2}{q+1}}|u|^{q+2}_{q+2}+\frac{1}{q+2}}\\
%\leq&\disp{\frac{q+1}{q+2}(|f|_{L^\infty(0,\omega; L^\infty(\Omega))}+1)^2(|\Omega|+1)|u|^{q+2}_{q+2}+\frac{1}{q+2}.}\\
\end{array}
\label{zjscz2.5297x9630111rrd67ddfff512df515}
\end{equation}
which, together with the comparison argument entails that there exists a $\lambda>1$ independent of $i$ such that
\begin{equation}
\begin{array}{rl}
&\disp{M_{i}(T)\leq \max\{\lambda^iM^{\frac{2p_i}{p_i-2}}_{i-1}(T),e^{\xi}|\Omega|\|a_0\|_{L^\infty(\Omega)}^{p_i}\}.}\\
%\leq&\disp{\int_\Omega (|f(x,t)|+L)|u|^{q+1} {}}\\
%\leq&\disp{\int_\Omega (k_1|u|^\alpha+k_2)|u|^{q+1} {}}\\
%=&\disp{\int\int\triangle J(x-y)u(y)(u|u|^q(x))dy{}.}
%\leq&\disp{(|g|_{L^\infty(0,\omega; L^\infty(\Omega))}+1)(|\Omega|+1)^{\frac{1}{2}}|u|^{q+1}_{q+2}.}\\
%\leq&\disp{\frac{q+1}{q+2}((|f|_{L^\infty(0,\omega; L^\infty(\Omega))}+1)(|\Omega|+1)^{\frac{1}{2}})^{\frac{q+2}{q+1}}|u|^{q+2}_{q+2}+\frac{1}{q+2}}\\
%\leq&\disp{\frac{q+1}{q+2}(|f|_{L^\infty(0,\omega; L^\infty(\Omega))}+1)^2(|\Omega|+1)|u|^{q+2}_{q+2}+\frac{1}{q+2}.}\\
\end{array}
\label{zjscz2.5297x9630111rrd6ssdd7ddfff512df515}
\end{equation}
Here we use the fact that $\kappa_i:=\frac{2p_i}{p_i-2}\leq 4.$
Now, if $\lambda^iM^{\kappa_i}_{i-1}(T)\leq e^{\xi\rho}|\Omega|\|a_0\|_{L^\infty(\Omega)}^{p_i}$ for infinitely many
$i\geq 1$, we get
\begin{equation}\left(\sup_{t\in(0,T)}\int_{\Omega} a^{p_{i-1}}(\cdot,t)\right)^{\frac{1}{p_{i-1}}}\leq \left(\frac{e^{\xi\rho}|\Omega|\|a_0\|_{L^\infty(\Omega)}^{p_i}}{\lambda^i}\right)^{\frac{1}{p_{i-1}\kappa_i}}
\label{zjscz2.5297x9ssdrff63011sdertt1rrd6ssdd7ddfff512df515}
\end{equation}
for such $i$, which entails that
\begin{equation}\sup_{t\in(0,T)}\|a(\cdot,t)\|_{L^\infty(\Omega)}\leq \|a_0\|_{L^\infty(\Omega)}.
\label{zjscz2.5297x963011sdertt1rrd6ssdd7ddfff512df515}
\end{equation}
%and hence proves the lemma in this case.
 %with $C = \|u_0\|_{L^\infty(\Omega)}$.

Otherwise, if $\lambda^iM^{\kappa_i}_{i-1}(T)>e^{\xi}|\Omega|\|a_0\|_{L^\infty(\Omega)}^{p_i}$ for all sufficiently large $i$, then by \dref{zjscz2.5297x9630111rrd6ssdd7ddfff512df515}, we derive that
\begin{equation}
\begin{array}{rl}
&\disp{M_{i}(T)\leq \lambda^iM^{\kappa_i}_{i-1}(T)~~~\mbox{for all sufficiently large}~~~i.}\\
%\leq&\disp{\int_\Omega (|f(x,t)|+L)|u|^{q+1} {}}\\
%\leq&\disp{\int_\Omega (k_1|u|^\alpha+k_2)|u|^{q+1} {}}\\
%=&\disp{\int\int\triangle J(x-y)u(y)(u|u|^q(x))dy{}.}
%\leq&\disp{(|g|_{L^\infty(0,\omega; L^\infty(\Omega))}+1)(|\Omega|+1)^{\frac{1}{2}}|u|^{q+1}_{q+2}.}\\
%\leq&\disp{\frac{q+1}{q+2}((|f|_{L^\infty(0,\omega; L^\infty(\Omega))}+1)(|\Omega|+1)^{\frac{1}{2}})^{\frac{q+2}{q+1}}|u|^{q+2}_{q+2}+\frac{1}{q+2}}\\
%\leq&\disp{\frac{q+1}{q+2}(|f|_{L^\infty(0,\omega; L^\infty(\Omega))}+1)^2(|\Omega|+1)|u|^{q+2}_{q+2}+\frac{1}{q+2}.}\\
\end{array}
\label{zjscz2.5297x9630111rrd6ssdd7ddssddfffffff512df515}
\end{equation}
 Hence, we may choose $\lambda$ large enough such that
\begin{equation}
\begin{array}{rl}
&\disp{M_{i}(T)\leq \lambda^iM^{\kappa_i}_{i-1}(T)~~~\mbox{for all}~~~i\geq1.}\\
%\leq&\disp{\int_\Omega (|f(x,t)|+L)|u|^{q+1} {}}\\
%\leq&\disp{\int_\Omega (k_1|u|^\alpha+k_2)|u|^{q+1} {}}\\
%=&\disp{\int\int\triangle J(x-y)u(y)(u|u|^q(x))dy{}.}
%\leq&\disp{(|g|_{L^\infty(0,\omega; L^\infty(\Omega))}+1)(|\Omega|+1)^{\frac{1}{2}}|u|^{q+1}_{q+2}.}\\
%\leq&\disp{\frac{q+1}{q+2}((|f|_{L^\infty(0,\omega; L^\infty(\Omega))}+1)(|\Omega|+1)^{\frac{1}{2}})^{\frac{q+2}{q+1}}|u|^{q+2}_{q+2}+\frac{1}{q+2}}\\
%\leq&\disp{\frac{q+1}{q+2}(|f|_{L^\infty(0,\omega; L^\infty(\Omega))}+1)^2(|\Omega|+1)|u|^{q+2}_{q+2}+\frac{1}{q+2}.}\\
\end{array}
\label{zjscz2.5297x9630111rrd6ssdfffffdd7ddssddsddfffffffff512df515}
\end{equation}
% By a straightforward induction, we get
 Therefore, based on
 a straightforward induction (see e.g. Lemma 3.12 of \cite{Tao79477}) we have
\begin{equation}
\begin{array}{rl}
\disp M_{i}(T)\leq&\disp{
\lambda^{i+\sum_{j=2}^i(j-1)\cdot\Pi_{k=j}^i\kappa_k}M_{0}^{\Pi_{k=1}^i\kappa_k}~~~\mbox{for all}~~ i \geq 1.}\\
%=&\disp{\lambda^{i+2(i-1)}M_{i-2}^{2^2}}\\
%\leq&\disp{\lambda^{i+\Sigma_{j=2}^i(j-1)}M_{0}^{2^i}.}\\
\end{array}
\label{cz2.56303hhyy890678789ty4tt8890013378}
\end{equation}
where $\kappa_k := 2(1+\varepsilon_k)$ satisfies $\varepsilon_k =\frac{4}{p_k-2}\leq \frac{C_{12}}{2^k}$ for all $k\geq 1$ with some $C_{12}> 0$.
Therefore, due to the fact  that  $\ln(1 + x) \leq x (x\geq 0)$, we derive
\begin{equation}
\begin{array}{rl}
\Pi_{k=j}^i:=&\disp{2^{i+1-j}e^{\Sigma_{k=j}^i\ln(1+\varepsilon_j)}}\\
\leq&\disp{2^{i+1-j}e^{\Sigma_{k=j}^i\varepsilon_j}}\\
\leq&\disp{2^{i+1-j}e^{C_{12}}~~~\mbox{for all}~~ i \geq 1~~~\mbox{and}~~ j\in \{1,\ldots,i\},}\\
\end{array}
\label{cz2.563ddfgg03hhyy890678789ty4tt8890013378}
\end{equation}
which implies that
%and
$$
\begin{array}{rl}
\disp\frac{\sum_{j=2}^i(j-1)\cdot\Pi_{k=j}^i\kappa_k}{2^{i+2}}\leq&\disp{\frac{\sum_{j=2}^i(j-1)2^{i+1-j}e^{C_{12}}}{2^{i+2}}}\\
\leq&\disp{\frac{e^{C_{12}}}{2}\sum_{j=2}^i\frac{(j-1)}{2^{j}}}\\
\leq&\disp{\frac{e^{C_{12}}}{2}(\frac{1}{2}+\frac{1}{2^2})}\\
=&\disp{\frac{3e^{C_{12}}}{8}.}\\
\end{array}
$$
By the definition of $p_i$, we easily deduce from \dref{cz2.56303hhyy890678789ty4tt8890013378} that
\begin{equation}
\begin{array}{rl}
\disp M_{i}^{\frac{1}{p_i}}(T)\leq&\disp{
\lambda^{\frac{i}{2^{i+2}}+\frac{\sum_{j=2}^i(j-1)\cdot\Pi_{k=j}^i\kappa_k}{2^{i+2}}}M_{0}^{\frac{\Pi_{k=1}^i\kappa_k}{2^{i+2}}}}\\
\leq&\disp{
\lambda^{\frac{i}{2^{i+2}}}\lambda^{\frac{3e^{C_{12}}}{8}} M_{0}^{\frac{e^{C_{12}}}{4}}.}\\
%=&\disp{\lambda^{i+2(i-1)}M_{i-2}^{2^2}}\\
%\leq&\disp{\lambda^{i+\Sigma_{j=2}^i(j-1)}M_{0}^{2^i}.}\\
\end{array}
\label{cz2.56303hhyy89067878dfrrghhh9ty4tt8890013378}
\end{equation}
which after taking $i\rightarrow\infty$ and  $T\nearrow T_{max}$ readily implies that
% Taking $p_i$-th
%roots on both sides of \dref{cz2.56303hhyy890678789ty4tt8890013378}, with some basic calculation and by taking $T\nearrow T_{max}$,  we can finally conclude that
\begin{equation}\|a(\cdot,t)\|_{L^\infty(\Omega)}\leq \lambda^{\frac{3e^{C_{12}}}{8}} M_{0}^{\frac{e^{C_{12}}}{4}}~~~\mbox{for all}~~~t\in(0,T_{max}).
\label{zjscz2.5297x96ssddd3011sdertt1rrd6ssdd7ddffsdddddf512df515}
\end{equation}
% with $C = \|u_0\|_{L^\infty(\Omega)}$. \dref{zjscz2.5297x9630111rrddtt}.

Employing almost exactly the same arguments as in the proof of Lemmata  3.5--3.6 in \cite{PangPang1} (the minor necessary changes are left as an easy
 exercise to the reader), and taking advantage of \dref{11111gnhmkfghbnmcz2.5ghju48cfg924ghyujiffggg} and
 \dref{zjscz2.5297x96ssddd3011sdertt1rrd6ssdd7ddffsdddddf512df515}, we conclude the estimate for any $T<T_{max},$
 \begin{equation}\|\nabla w(\cdot,t)\|_{L^5(\Omega)}\leq C~~~\mbox{for all}~~~t\in(0,T).
\label{zjscz2.5297x96ssddd3011sdertt1rrd6ssdd7ddffdfgggsdddddf512df515}
\end{equation}
Now, with the above estimate in hand, using \dref{zjscz2.5297x963011sdertt1rrd6ssdd7ddfff512df515} and
\dref{zjscz2.5297x96ssddd3011sdertt1rrd6ssdd7ddffsdddddf512df515}, employing the extendibility criterion
provided by Lemma \ref{lemma70},
we may prove Theorem \ref{theorem3}.
%invoke
%%Lemma A.1 in \cite{Tao794} which by means of a Moser-type iteration applied to the first equation in \dref{1.1}
%establish
%\begin{equation}
%\|\nabla w(\cdot,t)\|_{L^{\infty}(\Omega)}  \leq C ~~\mbox{for all}~~ t\in(0,T_{max}).
%\label{hjjkkzjscz2.5297x9630111kkhhii}
%\end{equation}
\end{proof}

\begin{remark}
If $\mu>\xi\eta\max\{\|u_0\|_{L^\infty(\Omega)},1\}+\mu^*(\chi^2,\xi)$ (see the proof of Lemma 3.4 to  \cite{PangPang1}), one only need to estimate
$Cp^2\int_{\Omega}
a^{p}$ other than $Cp^2(\int_{\Omega}
a^{p+1}+1),$ which is different from this paper.
\end{remark}

{\bf Acknowledgement}:
% The author is very grateful to the anonymous reviewers for their carefully reading and valuable suggestions
%which greatly improved this work.
This work is partially supported by  the National Natural
Science Foundation of China (No. 11601215),
the
Natural Science Foundation of Shandong Province of China (No. ZR2016AQ17) and the Doctor Start-up Funding of Ludong University (No. LA2016006).


\begin{thebibliography}{00}






%
%\bibitem{Bessergg5572} D. Besser, P. Verde, Y. Nagamine,  F. Blasi, \textit{Signal transduction and u-PA/u-PAR system},
%Fibrinolysis, 10(1996), 215--237.

\bibitem{Bellomo1216} N. Bellomo,  A. Belloquid,   Y. Tao, M. Winkler,  \textit{Toward a mathematical theory of
Keller--Segel models of pattern formation in biological tissues}, Math. Models Methods Appl. Sci., 25(9)(2015), 1663--1763.


 \bibitem{Calvez455}  V. Calvez,  J. A. Carrillo, \textit{Volume effects in the Keller-Segel model: Energy estimates
preventing blow-up}, J. Math. Pures Appl.,  (9)(86)(2006), 155--175.



\bibitem{Cao} X. Cao, \textit{Boundedness in a three-dimensional chemotaxis--haptotaxis model}, Z. Angew. Math. Phys., 67(1)(2016), 1--13.

%
%\bibitem{Cao} X. Cao, \textit{Boundedness in a three-dimensional chemotaxis--haptotaxis model}, Z. Angew. Math. Phys., 67(1)2015, 1--13.
%
%
% \bibitem{Chaplain1}M. A. J. Chaplain, G. Lolas, \textit{Mathematical modelling of cancer invasion of tissue: The
%role of the urokinase plasminogen activation system}, Math. Models Methods Appl. Sci., 11(2005),  1685--1734.
%
%
%
% \bibitem{Chaplain7}  M. A. J. Chaplain, G. Lolas, \textit{Mathematical modelling of cancer invasion of tissue: dynamic heterogeneity},
%Net. Hetero. Med., 1(2006), 399--439.
%
%
%
%
% \bibitem{Cie791} T. Cie\'{s}lak,  C. Stinner, \textit{Finite-time blowup and global-in-time unbounded solutions to a parabolic--parabolic quasilinear
%Keller--Segel system in higher dimensions,} J. Diff. Eqns., 252(2012), 5832--5851.
%
%%\bibitem{Cie72}T. Cie\'{s}lak, M. Winkler, \textit{Finite-time blow-up in a quasilinear system of chemotaxis,}
%%Nonlinearity, 21(2008), 1057--1076.
%

%\bibitem{Hajaiej} H. Hajaiej, L. Molinet, T. Ozawa, B. Wang, \textit{Necessary and sufficient conditions for the fractional Gagliardo--Nirenberg inequalities and applications to Navier--Stokes and generalized boson equations}, in: Harmonic Analysis
%and Nonlinear Partial Differential Equations, in: RIMS K\^{o}ky\^{u}roku Bessatsu, vol. B26, Res. Inst. Math. Sci. (RIMS),
%Kyoto, 2011, pp. 159--175.

%




%
%\bibitem{Hillen79} T. Hillen,  K. J. Painter,  \textit{A use's guide to PDE models for chemotaxis,} J. Math. Biol., 58(2009), 183--217.
%
%\bibitem{Hillensxdc79}  T. Hillen, K. J. Painter, M. Winkler, \textit{Convergence of a cancer invasion model to a logistic
%chemotaxis model}, Math. Models Methods Appl. Sci., 23(2013), 165--198.
%%
%%\bibitem{Horstman317} D. Horstman,
%% \textit{On the existence of radially symmetric blow-up solutions
%% for the Keller--Segel model}, J. Math.
%%Biol., 44(2002), 463--478.
%
%\bibitem{Horstmann2710}D. Horstmann,  \textit{From $1970$ until present: the Keller--Segel model in chemotaxis and its consequences,} I.
%Jahresberichte der Deutschen Mathematiker-Vereinigung, 105(2003), 103--165.
%
%
%
%
%%\bibitem{Horstmann79} D. Horstmann,  G. Wang,  \textit{Blow-up in a chemotaxis model without symmetry assumptions}, Eur. J. Appl. Math., 12(2001), 159--177.

%
%\bibitem{Ishida}S. Ishida, K. Seki, T, Yokota, \textit{Boundedness in quasilinear Keller--Segel systems of parabolic--parabolic type on
%non-convex bounded domains}, J. Diff. Eqns., 256(2014), 2993--3010.
%
%
%\bibitem{Jger317} W. J\"{a}ger, S. Luckhaus,  \textit{On explosions of solutions to a system of partial differential equations modelling
%chemotaxis}, Trans. Am. Math. Soc., 329(1992), 819-824.
%
%\bibitem{Keller79} E. Keller, L. Segel, \textit{Initiation of slime mold aggregation viewed as an instability, }  J. Theor. Biol., 26(1970), 399--415.
%
%
%
%
%
%\bibitem{LiLittffggsssdddssddxxss} X. Li and Z. Xiang, \textit{Boundedness in quasilinear Keller-Segel equations with nonlinear
%sensitivity and logistic source,} Discrete Contin. Dyn. Syst., 35(2015), 3503--3531.
%
%
%
%%
%%\bibitem{Keller791} E. Keller, L. Segel, \textit{Traveling bands of chemotactic bacteria: a theoretical analysis,} J. Theor. Biol., 30(1971), 377--380.
%%
%
%
%%\bibitem{Keller2710}E. Keller, L. Segel,  \textit{Model for chemotaxis}, J. Theor. Biol., 30(1970),  225--234.
%%
%%\bibitem{Kowalczyk7101}R. Kowalczyk, \textit{Preventing blow-up in a chemotaxis model}, J. Math. Anal. Appl., 305(2005),
%%   566--585.
%%
%%
%%   \bibitem{Lachowicz}M. Lachowicz,\textit{ Micro and meso scales of description corresponding to a model of tissue invasion
%%by solid tumours}, Math. Models Methods Appl. Sci., 15(2005),    1667--1683.
%%
%%
%%\bibitem{Lachowicz1} M. Lachowicz, \textit{Towards microscopic and nonlocal models of tumour invasion of tissue, in
%%Selected Topics in Cancer Modeling}, N. Bellomo, M. Chaplain, and E. De Angelis, eds.,
%%Birkh\"{a}user Boston, Boston, 2008,  49--63.
%
%\bibitem{Ladyzenskaja710}O. A. Ladyzenskaja, V. A. Solonnikov,   N. N. Ural'eva, \textit{Linear and Quasi-linear
%Equations of Parabolic Type,} Amer. Math. Soc. Transl. 23, AMS, Providence, RI, 1968.
%
%%
%%\bibitem{Lauren2710}P.  Lauren\c{c}t, D. Wrzosek,\textit{A chemotaxis model with threshold density and degenerate diffusion. In Nonlinear
%%Elliptic and Parabolic Problems: Progress in Nonlinear Differential Equations and their Applications,} Chipot M,
%%Escher J (eds). Birkh\"{a}user: Boston, 2005, 273--290.
%%
%%
%%
%%
%%\bibitem{Levine317} H. A. Levine, B. P. Sleeman,  N. Nislen-Hamilton,
%% \textit{Mathematical modeling of the onset of capillary formation
%%initiating angiogenesis}, J. Math. Biol., 42(2001), 195--238.
%%
%%\bibitem{Lianu} G. Li\c{t}anu, C. Morales-Rodrigo,  \textit{Global solutions and asymptotic behavior for a
%%parabolic degenerate coupled system arising from biology}, Nonlinear Anal., 72(2010),
%%77--98.
%
%\bibitem{Lianu1} G. Li\c{t}anu, C. Morales-Rodrigo, \textit{Asymptotic behavior of global solutions to a model of
%cell invasion}, Math. Models Methods Appl. Sci., 20(2010),  1721--1758.
%
%
%\bibitem{liujinijjkk1} J. Liu, J. Zheng, Y. Wang, \textit{Boundedness in a quasilinear chemotaxis-haptotaxis system with logistic source}, Z. Angew. Math. Phys., 67(2)2016, 1--33.
%
%
%
%%\bibitem{Lorz79} A. Lorz,  \textit{Coupled chemotaxis fluid model}, Math. Models Methods Appl. Sci., 20(2010), 987--1004.
%
%%\bibitem{Marchant317} B. P. Marchant, J. Norburg, J. A. Sherratt,
%% \textit{Travelling wave solutions to a haptotaxis-dominated model of
%%malignant invasion}, Nonlinearity 14(2001), 1653--1671.
%
%\bibitem{Marciniak} A. Marciniak-Czochra, M. Ptashnyk,  \textit{Boundedness of solutions of a haptotaxis model},
%Math. Models Methods Appl. Sci., 20(2010),  449--476.
%
%%
%%\bibitem{Morales317} C. Morales-Rodrigo,  \textit{Local existence and uniqueness of regular solutions in a model of tissue invasion by
%%solid tumours}, Math. Comput. Model., 47(2008), 604--613.
%
%% \bibitem{Murray2710}J. Murray,  \textit{Mathematical Biology: II. Spatial Models and Biomedical Applications (3rd edn)}, Springer, New York,
%%2003.
%
%\bibitem{Nanjundiahffger79312} V. Nanjundiah, \textit{Chemotaxis, signal relaying and aggregation morpholog,} J. Theor.
%Biol., 42(1973), 63--105.
%
%
%%\bibitem{Nagai318}T. Nagai, \textit{Blow-up of radially symmetric solutions to a chemotaxis system}, Adv. Math. Sci.
%%Appl., 5(1995), 581--601.
%%
%%
%%
%%\bibitem{Nakaguchi3710} E. Nakaguchi,  K. Osaki,\textit{Global existence of solutions to a parabolic-parabolic system for
%%chemotaxis with weak degradation}, Nonlinear Anal. TMA., 74(2011), 286--297.
%%
%%
%%
%%\bibitem{Nirenberg79} L. Nirenberg, \textit{An extended interpolation inequality,} Ann. Sc. Norm. Super. Pisa Cl. Sci., 20(4)(1966), 733--737.
%%
%
%
%\bibitem{Osakix391} K. Osaki, T. Tsujikawa, A. Yagi,   M. Mimura, \textit{Exponential attractor for a chemotaxis growth
%system of equations}, Nonlinear Anal. TMA., 51(2002),  119--144.
%
%
%
%% \bibitem{Osaki79} K. Osaki,   A. Yagi,  \textit{Finite dimensional attractors for one-dimensional Keller--Segel equations}, Funkcial. Ekvac., 44(2001), 441--469.
%
%%
%%\bibitem{Osaki17102}K. Osaki,   A. Yagi, \textit{Global existence for a chemotaxis-growth system in $\mathbb{R}^2$,} Adv. Math.
%%Sci. Appl., 12(2002), 587--606
%
%%\bibitem{Patlakrff17102} C. S. Patlak, \textit{Random walk with persistence and external bias}, Bull. Math. Biophys., 15(1953), 311--338.
%
%
% \bibitem{Painter79}  K. J. Painter, T. Hillen,  \textit{Volume-filling and quorum-sensing in models for chemosensitive movement}, Can. Appl. Math.
%Q., 10(2002), 501--543.
%
%
%
%\bibitem{Simon} J. Simon, \textit{Compact sets in the space $L^{p}(O, T;B)$}, Annali di Matematica Pura ed Applicata, 146(1)(1986), 65--96.
%
%
%
%
%%\bibitem{Panovska317}J. Panovska, H. Byrne, P. Maini,
%%\textit{Mathematical modelling of vascular tumour growth and implications for
%%therapy Mathematical Modeling of Biological Systems}, vol 1 ed A Deutsch et al
%%(Boston, MA: Birkh\"{a}user)
%%1(2007), 211--222.
%%
%%
%%
%%\bibitem{Panovska3170} J. Panovska, H. Byrne,  P. Maini,
%%  \textit{A theoretical study of the response of vascular tumours to different types
%%of chemotherapy}, Math. Comput. Model., 47(2008), 560--579.
%
%%\bibitem{Stinner4566}  C. Stinner,  C. Surulescu, M. Winkler, \textit{Global weak solutions in a PDE-ODE system
%%modeling multiscale cancer cell invasion},  SIAM J. Math. Anal. 46(2014), 1969--2007.
%
%%\bibitem{Perthame317} B. Perthame, \textit{Transport Equations in Biology}, Birkh\"{a}user Verlag, Basel, Switzerland, 2007.
%%
%
%%\bibitem{Perumpanani}A. J. Perumpanani, H. M. Byrne, \textit{Extracellular matrix concentration exerts selection
%%pressure on invasive cells}, Eur. J. Cancer, 35(1999), 1274--1280.
%%
%%\bibitem{Porzio2710} M. Porzio, V. Vespri,  \textit{H\"{o}lder estimates for local solutions of some doubly nonlinear degenerate parabolic equations,}
%%J. Diff. Eqns., 103(1993),  146--178.
%
%
% %\bibitem{Rascle710}M. Rascle,   C. Ziti, \textit{Finite time blow-up in some models of chemotaxis}, J. Math. Biol., 33(1995),    388--414.
%
%
%%\bibitem{Senba3710}T. Senba,  \textit{Type II blowup of solutions to a simplified Keller--Segel system in two dimensional
%%domains}, Nonlinear Anal. TMA., 66(8)(2007), 1817--1839.
%%%
%%\bibitem{Sugiyama710}Y. Sugiyama, \textit{Time global existence and asymptotic behavior of solutions to degenerate quasilinear
%%parabolic systems of chemotaxis}, Diff. Integral Eqns., 20(2007),    133--180.
%%%
%% \bibitem{Sugiyama3710} Y. Sugiyama, \textit{Blow-up criterion via scaling invariant quantities with effect on coefficient growth
%%in Keller--Segel system}, Diff. Int. Eqns., 23(7-8)(2010), 619--634.
%%
%%\bibitem{Szymaska} Z. Szymaska, C. Morales-Rodrigo, M. Lachowicz,  M. Chaplain, \textit{Mathematical modelling
%%of cancer invasion of tissue: The role and effect of nonlocal interactions}, Math.
%%Models Methods Appl. Sci., 19(2009),  257--281.
%
%
%\bibitem{Tao1}Y. Tao, \textit{Global existence of classical solutions to a combined chemotaxis--haptotaxis model with
%logistic source}, J. Math. Anal. Appl., 354(2009), 60--69.
%
%%\bibitem{Tao79} Y. Tao,  \textit{Boundedness in a chemotaxis model with oxygen consumption by bacteria}, J. Math. Anal. Appl., 381(2011), 521--529.


%\bibitem{Tao3} Y. Tao, M. Wang,  \textit{Global solution for a chemotactic--haptotactic model of cancer invasion},
%Nonlinearity, 21(2008), 2221--2238.
%
%
%\bibitem{Tao2}Y. Tao, M. Wang, \textit{A combined chemotaxis--haptotaxis system: The role of logistic source},
%SIAM J. Math. Anal., 41(2009),  1533--1558.
%
%
%%
%%\bibitem{Tao721} Y. Tao, Z.  Wang, \textit{Competing effects of attraction vs. repulsion in chemotaxis,}
%%Math. Models Methods Appl. Sci., 1(2013), 1--36.
%
%\bibitem{Tao72} Y. Tao, M. Winkler,  \textit{A chemotaxis--haptotaxis model: the roles of porous medium
%diffusion and logistic source}, SIAM J. Math. Anal., 43(2011), 685--704.
%
%
%\bibitem{Tao794} Y. Tao, M. Winkler,  \textit{Boundedness in a quasilinear parabolic--parabolic Keller--Segel system with subcritical sensitivity}, J.
%Diff. Eqns., 252(2012), 692--715.
%
%
%
%%\bibitem{Tao795} Y. Tao, M. Winkler,  \textit{Global existence and boundedness in a Keller--Segel-Stokes model with arbitrary porous medium
%%diffusion}, Discrete Contin. Dyn. Syst. Ser.  A., 32(5)(2012), 1901--1914.
%%
%%\bibitem{Tao793} Y. Tao, M. Winkler,  \textit{Eventual smoothness and stabilization of large-data solutions in a three-dimensional chemotaxis
%%system with consumption of chemoattractant},  J. Diff. Eqns., 252(2012), 2520--2543.
%%
%%
%% \bibitem{Tao796} Y. Tao, M. Winkler, \textit{Locally bounded global solutions in a three-dimensional chemotaxis-Stokes system with nonlinear
%%diffusion}, Ann. I. H. Poincar\'{e}-AN, 30(2013), 157--178.
%
%
%
%\bibitem{Taox26} Y. Tao, M. Winkler,
%\textit{Boundedness and stabilization in
%a multi-dimensional chemotaxis--haptotaxis model},
%Proceedings of the Royal Society of Edinburgh, 144(2014), 1067--1084.
%
%\bibitem{Taox26216} Y. Tao, M. Winkler, \textit{Dominance of chemotaxis in a
%chemotaxis--haptotaxis model},
%Nonlinearity, 27(2014), 1225--1239.
%
%\bibitem{Tao79477} Y. Tao, M. Winkler,  \textit{Energy-type estimates and global solvability in a two-dimensional chemotaxis--haptotaxis
%model with remodeling of non-diffusible attractant}, J. Diff. Eqns.,  257(2014), 784--815.
%
%%\bibitem{Tao41215}  Y. Tao, M. Winkler, \textit{Boundedness and decay enforced by quadratic degradation in a three-dimensional
%%chemotaxis--fluid system}, Z. Angew. Math. Phys., 66(2015), 2555--2573.
%
%%\bibitem{Taoddfvbb41215}  Y. Tao, M. Winkler, \textit{Large time behavior in a multi--dimensional Chemotaxis-Haptotaxis model
%%with slow signal diffusion},  SIAM J. Math. Anal., 47(6)(2015), 4229--4250.
%
%
%
%\bibitem{Tello710} J. I. Tello,   M. Winkler, \textit{A chemotaxis system with logistic source}, Comm. Partial
%Diff. Eqns., 32(2007),    849--877.
%
%%
%%\bibitem{Temam77} R. Temam, \textit{Navier--Stokes Equations.Theory and Numerical Analysis}, Stud. Math. Appl., vol.2, North--Holland, Amsterdam,1977.
%
%%\bibitem{Temam} R. Temam, \textit{Infinite-Dimensional Dynamical Systems in Mechanics and
%%Physics, second ed.}, in: Applied Mathematical Sciences, Springer,
%%New York, 1997.
%
%%
%%\bibitem{Tuval79}  I. Tuval,  L. Cisneros, C. Dombrowski, C. W. Wolgemuth,  J. O. Kessler,  R. E. Goldstein,  \textit{Bacterial swimming and
%%oxygen transport near contact lines}, Proc. Natl. Acad. Sci., USA 102(2005), 2277---2282.
%
%%\bibitem{Urbano2710} J. Urbano, \textit{The Method of Intrinsic Scaling. A Systematic Approach to Regularity for Degenerate and Singular
%%PDEs}, Lecture Notes in Mathematics, vol. 1930. Springer: New York, 2008.
%
%%\bibitem{Urbano27101} J. Urbano,  \textit{H\"{o}lder continuity of local weak solutions for parabolic equations exhibiting two degeneracies},
%%Advances in Diff. Eqns., 6(2001),  327--358.
%
%
%\bibitem{Walker} C. Walker, G. F. Webb, \textit{Global existence of classical solutions for a haptotaxis model},
%SIAM J. Math. Anal., 38(2007),  1694--1713.
%
%
%\bibitem{Wang76}L. Wang, C. Mu, P. Zheng, \textit{On a quasilinear parabolic--elliptic chemotaxis system with logistic source},
%   J. Diff. Eqns., 256(2014), 1847--1872.
%
%
%\bibitem{Wangscd331629} Y.  Wang,  \textit{Boundedness in the higher-dimensional chemotaxis-haptotaxis model with nonlinear diffusion},
%J. Diff. Eqns.,
%260(2)(2016),  1975--1989.
%
%


  %\bibitem{Wangrrssdeeefffjkk1} Y. Wang, Y. Ke,   \textit{Large time behavior of solution to a fully parabolic chemotaxis-haptotaxis model in higher dimensions},
%  J. Diff. Eqns., 260(9)(2016),   6960--6988.
%%
%
%
%%\bibitem{Winkler7102}M. Winkler, \textit{Chemotaxis with logistic source: very weak global solutions and their boundedness
%%properties}, J. Math. Anal. Appl., 348(2008), 708--729.
%
%%\bibitem{Winkler79} M. Winkler, \textit{Does a volume-filling effect always prevent chemotactic collapse}, Math. Methods Appl. Sci., 33(2010), 12--24.
%
%\bibitem{Winkler79} M. Winkler, \textit{Does a volume-filling effect always prevent chemotactic collapse}, Math. Methods Appl. Sci., 33(2010), 12--24.
%
%
%
%
% \bibitem{Winkler37103}M. Winkler, \textit{Boundedness in the higher-dimensional parabolic--parabolic chemotaxis system with
%logistic source}, Comm.  Partial Diff. Eqns., 35(2010), 1516--1537.
%
%
% \bibitem{Winkler792} M. Winkler, \textit{Aggregation vs. global diffusive behavior in the higher-dimensional Keller--Segel model}, J. Diff.
%Eqns., 248(2010), 2889--2905.
%
%
%
%
%
%%
%\bibitem{Winkler793} M. Winkler, \textit{Finite-time blow-up in the higher-dimensional parabolic--parabolic Keller--Segel system}, J. Math. Pures
%Appl., 100(2013),  748--767.
%
%% \bibitem{Winkler11215}  M. Winkler, \textit{Boundedness and large time behavior in a three-dimensional chemotaxis--Stokes system
%%with nonlinear diffusion and general sensitivity},
%%Calculus of Variations and Partial Diff. Eqns., (54)(2015),   3789--3828.
%
%
%
%%\bibitem{Winkler79312} M. Winkler, \textit{Global asymptotic stability of constant equilibriain a fully parabolic chemotaxis system with strong logistic dampening},
%% J. Diff. Eqns., 257(2014), 1056--1077.
%%
%%\bibitem{Winkler79312vv}M. Winkler,
%%\textit{How far can chemotactic cross-diffusion enforce
%%exceeding carrying capacities}?,
%%J Nonlinear Sci., 24(2014), 809--855.
%%
%%
%%\bibitem{Winkler72} M. Winkler, K. C. Djie, \textit{Boundedness and finite-time
%%collapse in a  chemotaxis system with volume-filling effect}, Nonlinear Anal. TMA.,  72(2010),  1044--1064.
%
%%\bibitem{Zheng} J. Zheng, \textit{Boundedness of solutions to a quasilinear parabolic--elliptic Keller--Segel system with logistic source},
%%J. Diff. Eqns., 259(1)(2015), 120--140.












%\bibitem{Burger2710}M. Burger,  M. Di Francesco, Y. Dolak-Struss, \textit{The Keller--Segel model for chemotaxis with prevention of
%overcrowding: linear vs nonlinear diffusion,} SIAM J. Math. Anal., 38(2007),  1288--1315.



%\bibitem{Calvez710} V. Calvez,   J. A. Carrillo, \textit{Volume effects in the Keller--Segel model: Energy estimates
%preventing blow-up,} J. Math. Pures Appl., 9(86)(2006),    155--175.









%\bibitem{Cie794} T. Cie\'{s}lak,  P. Lauren\c{c}ot,  \textit{Finite time blow--up for a one--dimensional quasilinear parabolic--parabolic chemotaxis system,}
%Ann. I. H. Poincar\'{e}-AN,  27(2010), 437--446.





%\bibitem{Cie793} T. Cie\'{s}lak, C. Stinner, \textit{Finite-time blow-up in a supercritical quasilinear parabolic--parabolic Keller--Segel system in
%dimension $2$,} Acta Appl. Math., 129(2014), 135--146.



%\bibitem{Cie1sseerr7102} T. Cie\'{s}lak,  C. Stinner, \textit{New critical exponents in a fully parabolic quasilinear Keller--Segel system and applications
%to volume filling models}, J. Diff. Eqns., 258(2015), 2080--2113.


% \bibitem{Tao44566ber} Y. Tao and M. Winkler, Boundedness and decay enforced by quadratic degradation
%in a three-dimensioanl chemotaxis-fluid system, Z. Angew. Math. Phys. 66 (2015),
%2555--2573.


%\bibitem{Hieber}  M. Hieber, J. Pr\"{u}ss, \textit{Heat kernels and maximal $L^p$-$L^q$ estimate for parabolic evolution equations}, Comm. Partial Diff. Eqns., 22(1997), 1647--1669.


%\bibitem{Ishisseerrda} S. Ishida and T. Yokota, \textit{Blow--up in finite or infinite time for quasilinear degenerate Keller--Segel systems of parabolic--parabolic type}, Discrete Contin. Dyn. Syst. Ser. B, 18(2013), 2569--2596.

%\bibitem{Horstmann791} D. Horstmann, M. Winkler, \textit{Boundedness vs. blow-up in a chemotaxis system}, J. Diff. Eqns, 215(2005), 52--107.



%
%\bibitem{J79} W. J\"{a}ger, S. Luckhaus,  \textit{On explosions of solutions to a system of partial differential equations modelling chemotaxis,}
%Trans. Am. Math. Soc., 329(1992), 819--824.


%\bibitem{Karlsen2710}K. Karlsen, N. Risebro, \textit{On the uniqueness and stability of entropy solutions of nonlinear degenerate parabolic
%equations with rough coefficients}, Disc. Cont. Dynamical Systems, 9(2003),  1081--1104.


%\bibitem{Keller2710}E. Keller, L. Segel,  \textit{Model for chemotaxis}, J. Theor. Biol., 30(1970),  225--234.





%\bibitem{Kowalczyk79} R. Kowalczyk, Z. Szyma\'{n}ska, \textit{ On the global existence of solutions to an aggregation model}, J. Math. Anal.
%Appl., 343(2008), 379--398.






%\bibitem{Karlsen2710}K. Karlsen, N. Risebro, \textit{On the uniqueness and stability of entropy solutions of nonlinear degenerate parabolic
%equations with rough coefficients}, Disc. Cont. Dynamical Systems, 9(2003),  1081--1104.







%\bibitem{Galakhov1230} E. Galakhov, O. Salieva, J. I. Tello, \textit{On a Parabolic-Elliptic system with chemotaxis and logistic type growth},  J. Diff. Eqns.,
% 261(8)(2016), 4631--4647.








%\bibitem{Li79} T. Li, Z.A. Wang,  \textit{Asymptotic nonlinear stability of traveling waves to conservation laws arising from chemotaxis},  J.
%Diff. Eqns., 250(2011), 1310--1333.

% \bibitem{Li710}G. Li\c{t}anu,  C. Morales-Rodrigo, \textit{Asymptotic behavior of global solutions to a model of
%cell invasion}, Math. Models Methods Appl. Sci., 20(2010),    1721--1758.






%\bibitem{Liu79} J.-G. Liu,  A. Lorz,  \textit{A coupled chemotaxis-fluid model: global existence}, Ann. I. H. Poincar\'{e}-AN, 28(2011), 643--652.



%\bibitem{Lorz79} A. Lorz,  \textit{Coupled chemotaxis fluid model}, Math. Models Methods Appl. Sci., 20(2010), 987--1004.


%
%\bibitem{Mizoguchi115} N. Mizoguchi, P. Souplet, \textit{Nondegeneracy of blow-up points for the parabolic Keller--Segel system}, Ann. I. H. Poincar\'{e}-AN, (31)4, (2014), 851--875.



%\bibitem{Lankeitberg79} J. Lankeit, Eventual smoothness and asymptotics in a three-dimensional chemotaxis
%system with logistic source, J. Differential Equations 258 (2015), 1158每1191.




%
%\bibitem{Mu44566i79} C. Mu and K. Lin, Global dynamics in a fully parabolic chemotaxis system with
%logistic source. Discrete Contin. Dyn. Syst. 36 (2016), 5025每5046.

%
% \bibitem{Osaki711} K. Osaki,   A. Yagi,  \textit{Finite dimensional attractors for one-dimensional Keller--Segel equations}, Funkcial. Ekvac., 44(2001), 441--469.



% \bibitem{Osaki17102}K. Osaki,   A. Yagi, \textit{Global existence for a chemotaxis-growth system in $\mathbb{q_0}^2$,} Adv. Math.
%Sci. Appl., 12(2002), 587--606.


% \bibitem{Nagai44556710} T. Nagai, T. Senba and K. Yoshida,  \textit{Application of the Trudinger-Moser inequality
%to a parabolic system of chemotaxis}, Funkcial. Ekvac., 40(1997), 411--433.


%
%\bibitem{Nakagussdchi3710} E. Nakaguchi,  K. Osaki, \textit{Global solutions and exponential attractors of a parabolic-parabolic system
%for chemotaxis with subquadratic degradation},  Discrete Contin. Dyn. Syst. Ser. B, 18(10)(2013), 2627--2646.
%









%\bibitem{Osaki79} K. Osaki,   A. Yagi,  \textit{Finite dimensional attractors for one-dimensional Keller--Segel equations}, Funkcial. Ekvac., 44(2001), 441--469.


%\bibitem{Osaki17102}K. Osaki,   A. Yagi, \textit{Global existence for a chemotaxis-growth system in $\mathbb{R}^2$,} Adv. Math.
%Sci. Appl., 12(2002), 587--606.




%\bibitem{Patlak44517102}  C. S. Patlak, Random walk with persistence and external bias, Bull. Math. Biophys., 15(1953), 311--338.




%\bibitem{Perthame710} B. Perthame, \textit{Transport Equations in Biology}, Birkh\"{a}user Verlag, Basel, Switzerland, 2007.



%\bibitem{Porzio710}  M.M. Porzio, V. Vespri,  \textit{H\"{o}lder estimates for local solutions of some doubly nonlinear
%degenerate parabolic equations}ㄛ  J. Diff. Eqns., 103(1)(1993), 146--178.




% \bibitem{Senbaffggsssdddssddxxss} T. Senba and T. Suzuki, \textit{Parabolic system of chemotaxis: blowup in a finite and the
%infinite time}, Methods Appl. Anal., 8(2001), 349--367.










%\bibitem{Winkler72} M. Winkler, K. C. Djie, \textit{Boundedness and finite-time
%collapse in a  chemotaxis system with volume-filling effect}, Nonlinear Anal. TMA.,  72(2010),  1044--1064.
%
%
%\bibitem{Xiangssdd55672gg}  T. Xiang, \textit{Boundedness and global existence in the higher-dimensional parabolic--parabolic
%chemotaxis system with/without growth source}, J. Diff. Eqns.,
%258(2015), 4275--4323.






%
%\bibitem{Zheng0} J. Zheng, \textit{Boundedness of solutions to a quasilinear parabolic--elliptic Keller--Segel system with logistic source},  J. Diff. Eqns.,
%259(1)(2015),  120--140.
%
%\bibitem{Zheng33312186} J. Zheng, \textit{Boundedness of solutions to a quasilinear parabolic--parabolic Keller--Segel system with logistic source},
%J. Math. Anal. Appl.,  431(2)(2015),  867--888.
%
%
%\bibitem{Zhengsssddswwerrrseedssddxxss}  J. Zheng, \textit{Boundedness and global asymptotic stability of constant equilibria in a fully parabolic chemotaxis system with nonlinear a logistic source}, J. Math. Anal.  Appl., 450(2017), 104--1061.
%
%\bibitem{Zhengsssddsseedssddxxss}  J. Zheng, \textit{A note on boundedness of solutions to a  higher-dimensional  quasi--linear chemotaxis system with logistic source}, Zeitschriftf\"{u}r Angewandte Mathematik und Mechanik, (97)(4)(2017), 414--421.
%
%
%
%\bibitem{Zhengzseeddd0} J. Zheng, \textit{Boundedness in a two-species quasi-linear chemotaxis system with two chemicals},
%Topological methods in nonlinear analysis, (49)(2)(2017), 463--480.
%
%    \bibitem{Zhenssdssdddfffgghjjkk1} J. Zheng, \textit{Boundedness of solution of a  higher-dimensional parabolic--ODE--parabolic chemotaxis--haptotaxis model with generalized logistic source}, Nonlinearity, 30(2017), 1987--2009.
%
%
%
%
%\bibitem{Zhddengssdeeezseeddd0} J. Zheng, \textit{Boundedness of solutions to a quasilinear higher-dimensional chemotaxis--haptotaxis model with nonlinear diffusion},
%Discrete and Continuous Dynamical Systems,  (37)(1)(2017), 627-- 643.
%
%
%\bibitem{Zhengghjjkk1} J. Zheng,  Y. Wang, \textit{Boundedness of solutions to a quasilinear chemotaxis--haptotaxis model}, Compu. Math. Appl., 71(2016), 1898--1909.
%
%\bibitem{Zhenghhjjghjjkk1} P. Zheng, C. Mu, X. Song,     \textit{On the boundedness and decay of solutions for a chemotaxis-haptotaxis system with nonlinear diffusion}, Disc. Cont. Dyna. Syst.,  36(3)(2015), 1737--1757.





 \bibitem{Chaplain3444}  M. A. J. Chaplain,  G. Lolas, \textit{Mathematical modelling of cancer invasion of tissue: dynamic heterogeneity},
Net. Hetero. Med., 1(2006), 399--439.

 \bibitem{Chaplain3} M. A. J. Chaplain,  A. R. A. Anderson, \textit{Mathematical modelling of tissue invasion, in
Cancer Modelling and Simulation}, L. Preziosi, ed., Chapman  Hall/CRC, Boca Raton,
FL, 2003,  269--297.

\bibitem{Corriasx319} L. Corrias, B. Perthame,  H. Zaag, \textit{A chemotaxis model motivated by angiogenesis}, C. R.
Acad. Sci. Paris, Ser. I., 336(2003), 141--146.

\bibitem{Corriasx3191} L. Corrias, B. Perthame,  H. Zaag,  \textit{Global solutions of some chemotaxis and angiogenesis
systems in high space dimensions}, Milan J. Math., 72(2004), 1--28.

\bibitem{Fontelos444} M. A. Fontelos, A. Friedman,  B. Hu,
 \textit{Mathematical analysis of a model for the intiation of angiogenesis},
SIAM J. Math. Anal., 33(2002), 1330--1355.



 \bibitem{Fujierrfttg}  K. Fujie, M. Winkler,  T. Yokota, \textit{Boundedness of solutions to parabolic-elliptic Keller-
Segel systems with signal-dependent sensitivity}, Math. Methods Appl. Sci., 38(2015), 1212--1224.


\bibitem{Gerisch} A. Gerisch,  M. A. J. Chaplain, \textit{Mathematical modelling of cancer cell invasion of tissue:
Local and nonlocal models and the effect of adhesion}, J. Theoret. Biol., 250(2008),  684--704.

\bibitem{Herrero710} M. Herrero,    J.  Vel\'{a}zquez, \textit{A blow-up mechanism for a chemotaxis model,} Ann.
Scuola Norm. Super. Pisa Cl. Sci., 24(4)(1997),    633--683.

\bibitem{Hieber}  M. Hieber, J. Pr\"{u}ss, \textit{Heat kernels and maximal $L^p$-$L^q$ estimate for parabolic evolution equations}, Comm. Partial Diff. Eqns., 22(1997), 1647--1669.

\bibitem{Hillen79} T. Hillen,   K. J. Painter,  \textit{A use's guide to PDE models for chemotaxis,} J. Math. Biol., 58(2009), 183--217.

\bibitem{Hillensxdc79}  T. Hillen, K. J. Painter,  M. Winkler, \textit{Convergence of a cancer invasion model to a logistic
chemotaxis model}, Math. Models Methods Appl. Sci., 23(2013), 165--198.

\bibitem{Horstmann4455}  D. Horstmann, \textit{On the existence of radially symmetric blow-up solutions for the Keller-Segel
model}, J. Math. Biol., 44(2002),   463--478.

\bibitem{Horstmann2710}D. Horstmann,  \textit{From $1970$ until present: the Keller--Segel model in chemotaxis and its consequences,} I.
Jahresberichte der Deutschen Mathematiker-Vereinigung, 105(2003), 103--165.

\bibitem{Horstmann444} D. Horstmann,  G. Wang,  \textit{Blow-up in a chemotaxis model without symmetry assumptions}, Eur. J. Appl. Math., 12(2001), 159--177.




\bibitem{Horstmann791} D. Horstmann, M. Winkler, \textit{Boundedness vs. blow-up in a chemotaxis system}, J. Diff. Eqns, 215(2005), 52--107.


\bibitem{Ishida}S. Ishida, K. Seki,  T, Yokota, \textit{Boundedness in quasilinear Keller--Segel systems of parabolic--parabolic type on
non-convex bounded domains}, J. Diff. Eqns., 256(2014), 2993--3010.



\bibitem{Jger317} W. J\"{a}ger,  S. Luckhaus,  \textit{On explosions of solutions to a system of partial differential equations modelling
chemotaxis}, Trans. Am. Math. Soc., 329(1992), 819--824.

 \bibitem{Kavallaris333}  N. I. Kavallaris,  P. Souplet, \textit{Grow-up rate and refined asymptotics for a two-dimensional
Patlak--Keller--Segel model in a disk}, SIAM J. Math. Anal., 40(2009),   1852--1881.

 \bibitem{Keller79} E. Keller,  L. Segel, \textit{Initiation of slime mold aggregation viewed as an instability, }  J. Theor. Biol., 26(1970), 399--415.

%\bibitem{Kowalczyk444}  R. Kowalczyk and Z. Szyma\'{n}ska, \textit{On the global existence of solutions to an aggregation
%model}, J. Math. Anal. Appl., 343(2008),   379--398.


\bibitem{Ladyzenskaja710}O. A. Ladyzenskaja, V. A. Solonnikov,   N. N. Ural'eva, \textit{Linear and Quasi-linear
Equations of Parabolic Type,} Amer. Math. Soc. Transl. 23, AMS, Providence, RI, 1968.


\bibitem{Lankeit3444dd79477ddffvg} J. Lankeit, \textit{Eventual smoothness and asymptotics in a three-dimensional chemotaxis system
with logistic source}, J. Diff. Eqns., 258(2015),  1158--1191.


   \bibitem{Lankeitffg11} J. Lankeit,
\textit{Long-term behaviour in a chemotaxis-fluid
system with logistic source}, Math. Models Methods Appl. Sci., 26(2016), 2071, DOI: http://dx.doi.org/10.1142/S021820251640008X.

\bibitem{Liotta22444710} L.A. Liotta,  T. Clair, \textit{Checkpoint for invasion}, Nature, 405(2000), 287--288.

\bibitem{Lianu1} G. Li\c{t}anu, C. Morales-Rodrigo, \textit{Asymptotic behavior of global solutions to a model of
cell invasion}, Math. Models Methods Appl. Sci., 20(2010),  1721--1758.



\bibitem{Liughjj791} J. Liu, J. Zheng, Yifu Wang, \textit{Boundedness in a quasilinear chemotaxis-haptotaxis system with logistic source},
Z. Angew. Math. Phys., 67(2)(2016), 1--33.

\bibitem{Marciniak} A. Marciniak-Czochra, M. Ptashnyk,  \textit{Boundedness of solutions of a haptotaxis model},
Math. Models Methods Appl. Sci., 20(2010),  449--476.

\bibitem{Nagai318}T. Nagai, \textit{Blow-up of radially symmetric solutions to a chemotaxis system}, Adv. Math. Sci.
Appl., 5(1995), 581--601.

\bibitem{Osakix391} K. Osaki, T. Tsujikawa, A. Yag,   M. Mimura, \textit{Exponential attractor for a chemotaxis growth
system of equations}, Nonlinear Anal. TMA., 51(2002),  119--144.

\bibitem{PangPang1}  P. Pang, Y. Wang, \textit{Global existence of a two-dimensional chemotaxis--haptotaxis model with remodeling of non-diffusible attractant},
J. Diff. Eqns., 263(2)(2017),  1269--1292.



\bibitem{Perthame444}  B. Perthame, \textit{Transport Equations in Biology}, Birkh\"{a}user Verlag, Basel, Switzerland, 2007.

 \bibitem{Rascle444} M. Rascle,    C. Ziti, \textit{Finite time blow-up in some models of chemotaxis}, J. Math. Biol., 33(1995),    388--414.

\bibitem{Sherratt455} J. A. Sherratt, \textit{Chemotaxis and chemokinesis in eukaryotic cells: The Keller--Segel
equations as an approximation to a detailed model}, Bull. Math. Biol., 56(1994), 129--146.

%\bibitem{Simon} J. Simon, \textit{Compact sets in the space $L^{p}(O, T;B)$}, Annali di Matematica Pura ed Applicata, 146(1)(1986), 65--96.

\bibitem{Tao1}Y. Tao, \textit{Global existence of classical solutions to a combined chemotaxis--haptotaxis model with
logistic source}, J. Math. Anal. Appl., 354(2009), 60--69.

 \bibitem{Taossdd79477ddffvg} Y. Tao, \textit{Global existence for a haptotaxis model of cancer invasion with tissue remodeling},
Nonlinear Anal. RWA., 12(2011),  418--435.


\bibitem{Taox3201}Y. Tao, \textit{Boundedness in a two-dimensional chemotaxis--haptotaxis system}, Journal of Oceanography, 70(70)(2014), 165--174.
%

\bibitem{Tao3} Y. Tao,  M. Wang,  \textit{Global solution for a chemotactic--haptotactic model of cancer invasion},
Nonlinearity, 21(2008), 2221--2238.


\bibitem{Tao2}Y. Tao,  M. Wang, \textit{A combined chemotaxis--haptotaxis system: The role of logistic source},
SIAM J. Math. Anal., 41(2009),  1533--1558.

\bibitem{Tao72} Y. Tao,  M. Winkler,  \textit{A chemotaxis--haptotaxis model: the roles of porous medium
diffusion and logistic source}, SIAM J. Math. Anal., 43(2011), 685--704.


\bibitem{Tao794} Y. Tao,  M. Winkler,  \textit{Boundedness in a quasilinear parabolic--parabolic Keller--Segel system with subcritical sensitivity}, J.
Diff. Eqns., 252(2012), 692--715.

\bibitem{Taox26} Y. Tao,  M. Winkler,
\textit{Boundedness and stabilization in
a multi-dimensional chemotaxis--haptotaxis model},
Proceedings of the Royal Society of Edinburgh, 144(2014), 1067--1084.

\bibitem{Taox26216} Y. Tao,  M. Winkler, \textit{Dominance of chemotaxis in a
chemotaxis--haptotaxis model},
Nonlinearity, 27(2014), 1225--1239.

\bibitem{Tao79477} Y. Tao,  M. Winkler,  \textit{Energy-type estimates and global solvability in a two-dimensional chemotaxis--haptotaxis
model with remodeling of non-diffusible attractant}, J. Diff. Eqns.,  257(2014), 784--815.

\bibitem{Tao79477ddffvg} Y. Tao,  M. Winkler,  \textit{Large time behavior in a multidimensional chemotaxis--haptotaxis model with slow signal diffusion},
SIAM J. Math. Anal., 47(6)(2015), 4229--4250.


\bibitem{Tello710} J. I. Tello,    M. Winkler, \textit{A chemotaxis system with logistic source}, Comm. Partial
Diff. Eqns., 32(2007),    849--877.


\bibitem{Walker} C. Walker,  G. F. Webb, \textit{Global existence of classical solutions for a haptotaxis model},
SIAM J. Math. Anal., 38(2007),  1694--1713.

\bibitem{Wangscd331629} Y.  Wang,  \textit{Boundedness in the higher-dimensional chemotaxis-haptotaxis model with nonlinear diffusion},
J. Diff. Eqns., 260(2)(2016), 1975--1989.




 \bibitem{Winkler37103}M. Winkler, \textit{Boundedness in the higher-dimensional parabolic--parabolic chemotaxis system with
logistic source}, Comm.  Partial Diff. Eqns., 35(2010), 1516--1537.

%\bibitem{Winkler794} M. Winkler, \textit{Absence of collapse in a parabolic chemotaxis system with signal-dependent sensitivity}, Math.
%Nachr, 283(2010), 1664--1673.


 \bibitem{Winkler792} M. Winkler, \textit{Aggregation vs. global diffusive behavior in the higher-dimensional Keller--Segel model}, J. Diff.
Eqns., 248(2010), 2889--2905.





\bibitem{Winkler793} M. Winkler, \textit{Finite-time blow-up in the higher-dimensional parabolic--parabolic Keller--Segel system}, J. Math. Pures
Appl., 100(2013),  748--767.



\bibitem{Winkler79312} M. Winkler, \textit{Global asymptotic stability of constant equilibriain a fully parabolic chemotaxis system with strong logistic dampening},
 J. Diff. Eqns., 257(2014), 1056--1077.

  \bibitem{Zhangddff4556}  Q. Zhang, Y. Li, \textit{Global boundedness of solutions to a two-species chemotaxis system}, Z. Angew. Math. Phys., 66(1)(2015), 83--93.


\bibitem{Zheng} J. Zheng, \textit{Boundedness of solutions to a quasilinear parabolic--elliptic Keller--Segel system with logistic source},
J. Diff. Eqns., 259(1)(2015), 120--140.

\bibitem{Zhengsssdddssddxxss}  J. Zheng, \textit{Boundedness of solutions to a quasilinear parabolic--parabolic Keller--Segel system with logistic source}, 	 J. Math. Anal. Appl., 431(2)(2015), 867--888.

    \bibitem{Zhengaass}  J. Zheng, \textit{Optimal controls of multidimensional modified Swift--Hohenberg equation}, International Journal of Control, 88(10)(2015),  1--18.




\bibitem{Zhengssddxx} J. Zheng, \textit{Boundedness of solution of a  higher-dimensional parabolic--ODE--parabolic chemotaxis--haptotaxis model with generalized logistic source}, ﹛Nonlinearity,	﹛30(2017), 1987--2009.

\bibitem{Zhengsssssssddxx} J. Zheng, \textit{Boundedness of solutions to a quasilinear higher-dimensional chemotaxis--haptotaxis model with nonlinear diffusion},
Discrete and Continuous Dynamical Systems,  (37)(1)(2017), 627--643.



  \bibitem{Zhengssddxxss}  J. Zheng, \textit{A note on boundedness of solutions to a higher-dimensional
quasi-linear chemotaxis system with logistic source}, ﹛Z. Angew. Math.Mech.,	﹛(97)(4)(2017), 414--421.

%  \bibitem{Zhengdddssddxxss}  J. Zheng, \textit{Boundedness and global asymptotic stability of constant equilibria in a fully parabolic chemotaxis system with nonlinear a logistic source}, 	J. Math. Anal. Appl.,
%450(2017), 1047--1061.





\bibitem{Zhengaassssssssssddxx} J. Zheng,  Y. Wang,  \textit{Boundedness and decay behavior in a higher-dimensional quasilinearchemotaxis system with nonlinear logistic source}, Comp.  Math.  Appl., 72(10)(2016), 2604--2619.



\end{thebibliography}
\end{document}